\documentclass[12pt, reqno]{amsart}
\usepackage{txfonts}    
\usepackage{amssymb}
\usepackage{eucal}
\usepackage{amsmath}
\usepackage{amsthm}
\usepackage{amscd}
\usepackage{amsxtra}
\usepackage[dvips]{color}
\usepackage{multicol}
\usepackage[all,pdf]{xy}        
\usepackage{graphicx}
\usepackage{color}
\usepackage{colordvi}
\usepackage{xspace}
\usepackage{rotating}
\usepackage{tikz}
\usepackage{appendix}
\usepackage{xcolor}
\usepackage{ifpdf}
\ifpdf
    \usepackage[colorlinks,final,backref=page,hyperindex]{hyperref}
\else
    \usepackage[colorlinks,final,backref=page,hyperindex,hypertex]{hyperref}
\fi

\allowdisplaybreaks
\newcommand {\emptycomment}[1]{} 

\newcommand{\nc}{\newcommand}
\newcommand{\delete}[1]{}

\nc{\mlabel}[1]{\label{#1}}  
\nc{\mcite}[1]{\cite{#1}}  
\nc{\mref}[1]{\ref{#1}}  
\nc{\meqref}[1]{\eqref{#1}} 
\nc{\mbibitem}[1]{\bibitem{#1}} 

\delete{
\nc{\mlabel}[1]{\label{#1}  
{\hfill \hspace{1cm}{\bf{{\ }\hfill(#1)}}}}
\nc{\mcite}[1]{\cite{#1}{{\bf{{\ }(#1)}}}}  
\nc{\mref}[1]{\ref{#1}{{\bf{{\ }(#1)}}}}  
\nc{\meqref}[1]{\eqref{#1}{{\bf{{\ }(#1)}}}} 
\nc{\mbibitem}[1]{\bibitem[\bf #1]{#1}} 
}

\makeatletter
\@namedef{subjclassname@2020}{\textup{2020} Mathematics Subject Classification}
\makeatother
\newtheorem{thm}{Theorem}[section]
\newtheorem{lem}[thm]{Lemma}
\newtheorem{cor}[thm]{Corollary}
\newtheorem{pro}[thm]{Proposition}
\theoremstyle{definition}
\newtheorem{defi}[thm]{Definition}
\newtheorem{ex}[thm]{Example}
\newtheorem{rmk}[thm]{Remark}

\newcommand{\dc}{{d\mkern-5mu\mathchar '26}}
\nc{\tred}[1]{\textcolor{red}{#1}}
\nc{\tblue}[1]{\textcolor{blue}{#1}}
\nc{\tgreen}[1]{\textcolor{green}{#1}}
\nc{\tpurple}[1]{\textcolor{purple}{#1}}
\nc{\btred}[1]{\textcolor{red}{\bf #1}}
\nc{\btblue}[1]{\textcolor{blue}{\bf #1}}
\nc{\btgreen}[1]{\textcolor{green}{\bf #1}}
\nc{\btpurple}[1]{\textcolor{purple}{\bf #1}}
\nc{\zis}[1]{\textcolor{purple}{#1}}

\nc{\cm}[1]{\textcolor{red}{Chengming:#1}}
\nc{\li}[1]{\textcolor{blue}{#1}}
\nc{\lir}[1]{\textcolor{blue}{Li:#1}}
\nc{\sy}[1]{\textcolor{purple}{Siyuan:#1}}

\nc{\name}[1]{{\bf #1}}
\nc{\acd}{associative crossed datum\xspace}
\nc{\acds}{associative crossed data\xspace}

\nc{\mmod}[1]{{\ (\mathrm{mod}~{#1})}}

\nc{\qcl}{QCL\xspace} 
\nc{\qcls}{QCLs\xspace}

\nc{\bfK}{\mathbf{K}}
\nc{\dera}{{d_1}}
\nc{\derb}{{d_2}}

\nc{\vspa}{\vspace{-.1cm}}
\nc{\vspb}{\vspace{-.2cm}}
\nc{\vspc}{\vspace{-.3cm}}
\nc{\vspd}{\vspace{-.4cm}}
\nc{\vspe}{\vspace{-.5cm}}

\nc{\twovec}[2]{\left(\begin{array}{c} #1 \\ #2\end{array} \right )}
\nc{\threevec}[3]{\left(\begin{array}{c} #1 \\ #2 \\ #3 \end{array}\right )}
\nc{\twomatrix}[4]{\left(\begin{array}{cc} #1 & #2\\ #3 & #4 \end{array} \right)}
\nc{\threematrix}[9]{{\left(\begin{matrix} #1 & #2 & #3\\ #4 & #5 & #6 \\ #7 & #8 & #9 \end{matrix} \right)}}
\nc{\twodet}[4]{\left|\begin{array}{cc} #1 & #2\\ #3 & #4 \end{array} \right|}

\nc{\rk}{\mathrm{r}}
\newcommand{\g}{\mathfrak g}

\newcommand{\ad}{\mathrm{ad}}

\newtheoremstyle{redtitle}%
{3pt}{3pt}
\itshape
{}
{\color{red}\bfseries}
{.}
{ }
{\thmname{#1}\thmnumber{ #2}\thmnote{ (#3)}}

\theoremstyle{redtitle}

\nc{\tforall}{\text{ for all }}

\nc{\svec}[2]{{\tiny\left(\begin{matrix}#1\\
#2\end{matrix}\right)\,}}  
\nc{\ssvec}[2]{{\tiny\left(\begin{matrix}#1\\
#2\end{matrix}\right)\,}} 

\nc{\typeI}{local cocycle $3$-Lie bialgebra\xspace}
\nc{\typeIs}{local cocycle $3$-Lie bialgebras\xspace}
\nc{\typeII}{double construction $3$-Lie bialgebra\xspace}
\nc{\typeIIs}{double construction $3$-Lie bialgebras\xspace}

\nc{\bia}{{$\mathcal{P}$-bimodule ${\bf k}$-algebra}\xspace}
\nc{\bias}{{$\mathcal{P}$-bimodule ${\bf k}$-algebras}\xspace}

\nc{\rmi}{{\mathrm{I}}}
\nc{\rmii}{{\mathrm{II}}}
\nc{\rmiii}{{\mathrm{III}}}
\nc{\pr}{{\mathrm{pr}}}

\nc{\OT}{constant $\theta$-}
\nc{\T}{$\theta$-}
\nc{\IT}{inverse $\theta$-}
\nc{\rad}{\mathrm{ad}}
\nc{\bt}{\tilde{T}}
\nc{\bi}{\tilde{id}}
\nc{\asi}{ASI\xspace}
\nc{\qadm}{$Q$-admissible\xspace}
\nc{\aybe}{AYBE\xspace}
\nc{\admset}{\{\pm x\}\cup (-x+K^{\times}) \cup K^{\times} x^{-1}}
\nc{\hr}{\hat{r}}
\nc{\dualrep}{gives a dual representation\xspace}
\nc{\admt}{admissible to\xspace}

\nc{\opa}{\cdot_A}
\nc{\opb}{\cdot_B}

\nc{\post}{positive type\xspace}
\nc{\negt}{negative type\xspace}
\nc{\invt}{inverse type\xspace}

\nc{\pll}{\beta}
\nc{\plc}{\epsilon}

\nc{\ass}{{\mathit{Ass}}}
\nc{\lie}{{\mathit{Lie}}}
\nc{\comm}{{\mathit{Comm}}}
\nc{\dend}{{\mathit{Dend}}}
\nc{\zinb}{{\mathit{Zinb}}}
\nc{\tdend}{{\mathit{TDend}}}
\nc{\prelie}{{\mathit{preLie}}}
\nc{\postlie}{{\mathit{PostLie}}}
\nc{\quado}{{\mathit{Quad}}}
\nc{\octo}{{\mathit{Octo}}}
\nc{\ldend}{{\mathit{ldend}}}
\nc{\lquad}{{\mathit{LQuad}}}

 \nc{\adec}{\check{;}} \nc{\aop}{\alpha}
\nc{\dftimes}{\widetilde{\otimes}} \nc{\dfl}{\succ} \nc{\dfr}{\prec}
\nc{\dfc}{\circ} \nc{\dfb}{\bullet} \nc{\dft}{\star}
\nc{\dfcf}{{\mathbf k}} \nc{\apr}{\ast} \nc{\spr}{\cdot}
\nc{\twopr}{\circ} \nc{\tspr}{\star} \nc{\sempr}{\ast}
\nc{\disp}[1]{\displaystyle{#1}}
\nc{\bin}[2]{ (_{\stackrel{\scs{#1}}{\scs{#2}}})}  
\nc{\binc}[2]{ \left (\!\! \begin{array}{c} \scs{#1}\\
    \scs{#2} \end{array}\!\! \right )}  
\nc{\bincc}[2]{  \left ( {\scs{#1} \atop
    \vspace{-.5cm}\scs{#2}} \right )}  
\nc{\sarray}[2]{\begin{array}{c}#1 \vspace{.1cm}\\ \hline
    \vspace{-.35cm} \\ #2 \end{array}}
\nc{\bs}{\bar{S}} \nc{\dcup}{\stackrel{\bullet}{\cup}}
\nc{\dbigcup}{\stackrel{\bullet}{\bigcup}} \nc{\etree}{\big |}
\nc{\la}{\longrightarrow} \nc{\fe}{\'{e}} \nc{\rar}{\rightarrow}
\nc{\dar}{\downarrow} \nc{\dap}[1]{\downarrow
\rlap{$\scriptstyle{#1}$}} \nc{\uap}[1]{\uparrow
\rlap{$\scriptstyle{#1}$}} \nc{\defeq}{\stackrel{\rm def}{=}}
\nc{\dis}[1]{\displaystyle{#1}} \nc{\dotcup}{\,
\displaystyle{\bigcup^\bullet}\ } \nc{\sdotcup}{\tiny{
\displaystyle{\bigcup^\bullet}\ }} \nc{\hcm}{\ \hat{,}\ }
\nc{\hcirc}{\hat{\circ}} \nc{\hts}{\hat{\shpr}}
\nc{\lts}{\stackrel{\leftarrow}{\shpr}}
\nc{\rts}{\stackrel{\rightarrow}{\shpr}} \nc{\lleft}{[}
\nc{\lright}{]} \nc{\uni}[1]{\tilde{#1}} \nc{\wor}[1]{\check{#1}}
\nc{\free}[1]{\bar{#1}} \nc{\den}[1]{\check{#1}} \nc{\lrpa}{\wr}
\nc{\curlyl}{\left \{ \begin{array}{c} {} \\ {} \end{array}
    \right .  \!\!\!\!\!\!\!}
\nc{\curlyr}{ \!\!\!\!\!\!\!
    \left . \begin{array}{c} {} \\ {} \end{array}
    \right \} }
\nc{\leaf}{\ell}       
\nc{\longmid}{\left | \begin{array}{c} {} \\ {} \end{array}
    \right . \!\!\!\!\!\!\!}
\nc{\ot}{\otimes} \nc{\sot}{{\scriptstyle{\ot}}}
\nc{\otm}{\overline{\ot}}
\nc{\ora}[1]{\stackrel{#1}{\rar}}
\nc{\ola}[1]{\stackrel{#1}{\la}}
\nc{\pltree}{\calt^\pl}
\nc{\epltree}{\calt^{\pl,\NC}}
\nc{\rbpltree}{\calt^r}
\nc{\scs}[1]{\scriptstyle{#1}} \nc{\mrm}[1]{{\rm #1}}
\nc{\dirlim}{\displaystyle{\lim_{\longrightarrow}}\,}
\nc{\invlim}{\displaystyle{\lim_{\longleftarrow}}\,}
\nc{\mvp}{\vspace{0.5cm}} \nc{\svp}{\vspace{2cm}}
\nc{\vp}{\vspace{8cm}} \nc{\proofbegin}{\noindent{\bf Proof: }}
\nc{\proofend}{$\blacksquare$ \vspace{0.5cm}}
\nc{\freerbpl}{{F^{\mathrm RBPL}}}
\nc{\sha}{{\mbox{\cyr X}}}  
\nc{\ncsha}{{\mbox{\cyr X}^{\mathrm NC}}} \nc{\ncshao}{{\mbox{\cyr
X}^{\mathrm NC,\,0}}}
\nc{\shpr}{\diamond}    
\nc{\shprm}{\overline{\diamond}}    
\nc{\shpro}{\diamond^0} 
\nc{\shprr}{\diamond^r}  
\nc{\shpra}{\overline{\diamond}^r}
\nc{\shpru}{\check{\diamond}} \nc{\catpr}{\diamond_l}
\nc{\rcatpr}{\diamond_r} \nc{\lapr}{\diamond_a}
\nc{\sqcupm}{\ot}
\nc{\lepr}{\diamond_e} \nc{\vep}{\varepsilon} \nc{\labs}{\mid\!}
\nc{\rabs}{\!\mid} \nc{\hsha}{\widehat{\sha}}
\nc{\lsha}{\stackrel{\leftarrow}{\sha}}
\nc{\rsha}{\stackrel{\rightarrow}{\sha}} \nc{\lc}{\lfloor}
\nc{\rc}{\rfloor}
\nc{\tpr}{\sqcup}
\nc{\nctpr}{\vee}
\nc{\plpr}{\star}
\nc{\rbplpr}{\bar{\plpr}}
\nc{\sqmon}[1]{\langle #1\rangle}
\nc{\forest}{\calf}
\nc{\altx}{\Lambda_X} \nc{\vecT}{\vec{T}} \nc{\onetree}{\bullet}
\nc{\Ao}{\check{A}}
\nc{\seta}{\underline{\Ao}}
\nc{\deltaa}{\overline{\delta}}
\nc{\trho}{\tilde{\rho}}

\nc{\rpr}{\circ}
\nc{\dpr}{{\tiny\diamond}}
\nc{\rprpm}{{\rpr}}

\nc{\mmbox}[1]{\mbox{\ #1\ }} \nc{\ann}{\mrm{ann}}
\nc{\Aut}{\mrm{Aut}} \nc{\can}{\mrm{can}}
\nc{\twoalg}{{two-sided algebra}\xspace}
\nc{\colim}{\mrm{colim}}
\nc{\Cont}{\mrm{Cont}} \nc{\rchar}{\mrm{char}}
\nc{\cok}{\mrm{coker}} \nc{\dtf}{{R-{\rm tf}}} \nc{\dtor}{{R-{\rm
tor}}}

\nc{\depth}{{\mrm d}}
\nc{\Div}{{\mrm Div}} \nc{\End}{\mrm{End}} \nc{\Ext}{\mrm{Ext}}
\nc{\Fil}{\mrm{Fil}} \nc{\Frob}{\mrm{Frob}} \nc{\Gal}{\mrm{Gal}}
\nc{\GL}{\mrm{GL}} \nc{\Hom}{\mrm{Hom}} \nc{\hsr}{\mrm{H}}
\nc{\hpol}{\mrm{HP}} \nc{\id}{\mrm{id}} \nc{\im}{\mrm{im}}
\nc{\incl}{\mrm{incl}} \nc{\length}{\mrm{length}}
\nc{\LR}{\mrm{LR}} \nc{\mchar}{\rm char} \nc{\NC}{\mrm{NC}}
\nc{\mpart}{\mrm{part}} \nc{\pl}{\mrm{PL}}
\nc{\ql}{{\QQ_\ell}} \nc{\qp}{{\QQ_p}}
\nc{\rank}{\mrm{rank}} \nc{\rba}{\rm{RBA }} \nc{\rbas}{\rm{RBAs }}
\nc{\rbpl}{\mrm{RBPL}}
\nc{\rdef}{\mrm{def}} \nc{\rdiv}{{\rm div}} \nc{\rtf}{{\rm tf}}
\nc{\rtor}{{\rm tor}} \nc{\res}{\mrm{res}} \nc{\SL}{\mrm{SL}}
\nc{\Spec}{\mrm{Spec}} \nc{\tor}{\mrm{tor}} \nc{\Tr}{\mrm{Tr}}
\nc{\mtr}{\mrm{sk}}
\nc{\rbw}{\rm{RBW }} \nc{\rbws}{\rm{RBWs }} \nc{\rcot}{\mrm{cot}}
\nc{\rest}{\rm{controlled}\xspace}

\nc{\ab}{\mathbf{Ab}} \nc{\Alg}{\mathbf{Alg}}
\nc{\Dend}{\mathbf{DD}} \nc{\bfk}{{\bf k}} \nc{\bfone}{{\bf 1}}
\nc{\rb}{\mathrm{RB}}

\nc{\BA}{{\mathbb A}} \nc{\CC}{{\mathbb C}} \nc{\DD}{{\mathbb D}}
\nc{\EE}{{\mathbb E}} \nc{\FF}{{\mathbb F}} \nc{\GG}{{\mathbb G}}
\nc{\HH}{{\mathbb H}} \nc{\LL}{{\mathbb L}} \nc{\NN}{{\mathbb N}}
\nc{\QQ}{{\mathbb Q}} \nc{\RR}{{\mathbb R}} \nc{\BS}{{\mathbb{S}}} \nc{\TT}{{\mathbb T}}
\nc{\VV}{{\mathbb V}} \nc{\ZZ}{{\mathbb Z}}

\nc{\calao}{{\mathcal A}} \nc{\cala}{{\mathcal A}}
\nc{\calc}{{\mathcal C}} \nc{\cald}{{\mathcal D}}
\nc{\cale}{{\mathcal E}} \nc{\calf}{{\mathcal F}}
\nc{\calfr}{{{\mathcal F}^{\,r}}} \nc{\calfo}{{\mathcal F}^0}
\nc{\calfro}{{\mathcal F}^{\,r,0}} \nc{\oF}{\overline{F}}
\nc{\calg}{{\mathcal G}} \nc{\calh}{{\mathcal H}}
\nc{\cali}{{\mathcal I}} \nc{\calj}{{\mathcal J}}
\nc{\call}{{\mathcal L}} \nc{\calm}{{\mathcal M}}
\nc{\caln}{{\mathcal N}} \nc{\calo}{{\mathcal O}}
\nc{\calp}{{\mathcal P}} \nc{\calq}{{\mathcal Q}} \nc{\calr}{{\mathcal R}}
\nc{\calt}{{\mathcal T}} \nc{\caltr}{{\mathcal T}^{\,r}}
\nc{\calu}{{\mathcal U}} \nc{\calv}{{\mathcal V}}
\nc{\calw}{{\mathcal W}} \nc{\calx}{{\mathcal X}}
\nc{\CA}{\mathcal{A}}

\nc{\fraka}{{\mathfrak a}} \nc{\frakB}{{\mathfrak B}}
\nc{\frakb}{{\mathfrak b}} \nc{\frakd}{{\mathfrak d}}
\nc{\oD}{\overline{D}}
\nc{\frakF}{{\mathfrak F}} \nc{\frakg}{{\mathfrak g}}
\nc{\frakm}{{\mathfrak m}} \nc{\frakM}{{\mathfrak M}}
\nc{\frakMo}{{\mathfrak M}^0} \nc{\frakp}{{\mathfrak p}}
\nc{\frakS}{{\mathfrak S}} \nc{\frakSo}{{\mathfrak S}^0}
\nc{\fraks}{{\mathfrak s}} \nc{\os}{\overline{\fraks}}
\nc{\frakT}{{\mathfrak T}}
\nc{\oT}{\overline{T}}
\nc{\frakX}{{\mathfrak X}} \nc{\frakXo}{{\mathfrak X}^0}
\nc{\frakx}{{\mathbf x}}
\nc{\frakTx}{\frakT}      
\nc{\frakTa}{\frakT^a}    
\nc{\frakTxo}{\frakTx^0}   
\nc{\caltao}{\calt^{a,0}}   
\nc{\ox}{\overline{\frakx}} \nc{\fraky}{{\mathfrak y}}
\nc{\frakz}{{\mathfrak z}} \nc{\oX}{\overline{X}}
\nc{\ho}{\hat{\otimes}}
\font\cyr=wncyr10

\nc{\al}{\alpha}
\nc{\lam}{\lambda}
\nc{\lr}{\longrightarrow}

\begin{document}

\title[Poisson bialgebras by deformations-to-quasiclassical limits]{Poisson bialgebras by deformations-to-quasiclassical limits}

\author{Siyuan Chen}
\address{School of mathematics, Hunan Institute of Science and Technology, Yueyang, Hunan Province 414006, China}
\email{csy016@hnist.edu.cn}

\author{Chengming Bai}
\address{Chern Institute of Mathematics \& LPMC, Nankai University, Tianjin 300071 China}
\email{baicm@nankai.edu.cn}

\begin{abstract}
 Poisson algebras are the quasiclassical limits of associative algebra deformations of commutative associative algebras.
 This paper extends this process to the level of bialgebras. We derive Poisson bialgebras as the quasiclassical limits of antisymmetric infinitesimal bialgebra deformations of commutative and cocommutative antisymmetric
 infinitesimal bialgebras. It might be regarded as the ``infinitesimal" version of the
 quantization process of Lie bialgebras in terms of Hopf algebras.  Such deformations-quasiclassical limits
 process for a Poisson bialgebra is equivalently characterized in
 terms of the introduced notions of
 deformations of a matched pair of associative algebras as well as a standard Manin triple of associative
 algebras, whose corresponding quasiclassical limits
 are a matched pair of Poisson algebras and a standard Manin triple of Poisson algebras,
 respectively. We illustrate these equivalent deformations-quasiclassical
 limits processes via coherent derivations.
\end{abstract}

\subjclass[2020]{
    13D10, 
    16W60, 
    17B63, 
    53D55,  
    13N15   
}

\keywords{Deformation, quasiclassical limit,
    antisymmetric infinitesimal bialgebra, Poisson bialgebra, derivation}


\maketitle

\vspace{-1.5cm}

\tableofcontents

\vspace{-1.7cm}


\section{Introduction}
Poisson algebras originate from analytical mechanics and have
found applications in various fields of mathematics and physics
\cite{Hue,Ko, LG, RS}. It is well-known that Poisson algebras are
quasiclassical limits of associative algebra deformations of
commutative associative algebras. The purpose of this paper is to
lift the deformation-quasiclassical limit process for Poisson
algebras to the level of Poisson bialgebras.
\subsection{Antisymmetric infinitesimal bialgebra deformations and the corresponding quasiclassical limits}

A bialgebra structure consists of an algebra structure and a
coalgebra structure that satisfy certain compatibility conditions.
Typical examples of bialgebra structures on associative algebras
are the ones on Hopf algebras, in which comultiplications are
homomorphisms. Hopf algebras arise from topology and also appear
in many other areas \cite{Ab, CK, Ga, Hen}. 
The
deformation theory of Hopf algebras plays an important role in the
quantization process of Lie bialgebras \cite{Drq}, where Lie bialgebras are the algebraic
structures corresponding to Poisson-Lie groups
\cite{chari1995guide,Drh}. Note that in this quantization process
of Lie bialgebras, a Lie bialgebra might be thought to be
``inside" a commutative and cocommutative Hopf algebra such that
the bracket and cobracket are compatible in the Poisson sense with
the product and coproduct, respectively and then one studies
deformations of the resulting Hopf algebra in the ``direction" of
the bracket and cobracket.

There is the ``infinitesimal" version of the bialgebra structures
on associative algebras on Hopf algebras, where comultiplications
are derivations, namely infinitesimal bialgebras. They were
introduced by Join and Rota in order to provide an algebraic
framework for the calculus of divided difference \cite{JR}. As a
special class, the notion of antisymmetric infinitesimal
bialgebras was introduced in \cite{Bai} as the equivalent
structures of double constructions of Frobenius algebras, whereas
they were also called ``associative D-bialgebras" in
\cite{zhelyabian1997jordan} and ``balanced infinitesimal
bialgebras" in \cite{A3}. Recently, the theory of antisymmetric
infinitesimal bialgebras has been further developed. For example,
\cite{Lin} studied derivations on antisymmetric infinitesimal
bialgebras and the induced structures. In \cite{BGM,HB},
antisymmetric infinitesimal bialgebras were extended to the
context of Rota-Baxter algebras and  associative conformal
algebras.

Therefore it is natural to consider the ``infinitesimal" version
of the aforementioned quantization process of Lie bialgebras in
terms of Hopf algebras, by replacing ``Hopf" with
``infinitesimal". There a commutative and cocommutative infinitesimal bialgebra is automatically
antisymmetric, and one then constructs the corresponding
deformation theory. So one should consider the deformations of
antisymmetric infinitesimal bialgebras and the corresponding
quasiclassical limits. Moreover, such a study extends the involved
results at the level of algebras to the context of bialgebras,
establishing relationships between different bialgebras with
potential applications in mathematical physics.

Indeed we show that the quasiclassical limits of antisymmetric
infinitesimal bialgebra deformations of commutative and
cocommutative antisymmetric infinitesimal bialgebras are Poisson
bialgebras introduced in \cite{NB}. It is exactly the bialgebra
version of the fact that Poisson algebras are the quasiclassical
limits of associative algebra deformations of commutative
associative algebras. Moreover, inspired by the construction of
associative algebra deformations from derivations \cite{G3}, we
give a similar construction of antisymmetric infinitesimal
bialgebra deformations, which is related to the theory of
differential antisymmetric infinitesimal bialgebras \cite{Lin},
illustrating explicitly the deformations-quasiclassical limits
 process for Poisson bialgebras.

\subsection{Equivalent characterizations of the deformations-quasiclassical limits process for Poisson bialgebras}
An antisymmetric infinitesimal bialgebra can be equivalently
characterized by a matched pair of associative algebras or a
standard Manin triple of associative algebras. The latter is also
called a double construction of Frobenius algebra \cite{Bai},
whereas Frobenius algebras 
have important applications in representation theory, $2$D
topological quantum field theories and string theory \cite{Koc,
La, SY}. Note that a matched pair of associative algebras consists
of a pair of associative algebras that give an associative algebra
structure on the direct sum of the underlying vector spaces via
interacting actions. It generalizes the notions of bimodules and
bimodule algebras of associative algebras, appearing in the
extension and deformation theory of associative algebras
\cite{Ag}.

These equivalences can be summarized as follows.
\begin{equation*}
\xymatrix{
\txt{matched pairs of\\ associative algebras}\ar@{<->}[rr] &&\txt{antisymmetric infinitesimal\\ bialgebras} \ar@{<->}[rr]  &&\txt{standard Manin triples\\ of associative algebras}
}
\end{equation*}
There are similar characterizations of Poisson bialgebras
\cite{NB}.
\begin{equation*}
\xymatrix{
\txt{matched pairs of\\ Poisson algebras}\ar@{<->}[rr] &&\txt{Poisson\\ bialgebras} \ar@{<->}[rr]  &&\txt{standard Manin triples\\ of Poisson algebras}
}
\end{equation*}
So it is natural to consider whether the
deformations-quasiclassical limits
 processes can be fit into these equivalent relationships, that is, to consider deformations  of matched pairs of associative algebras as well as
standard Manin triples of associative algebras, and the
corresponding quasiclassical limits.

Explicitly, the quasiclassical limit of a deformation of a matched
pair of commutative associative algebras is a pair of Poisson
algebras giving a Poisson algebra structure on the direct sum of
the underlying vector spaces via interacting actions, that is, a
matched pair of Poisson algebras. Therefore, the
deformations-quasiclassical limits process for a Poisson bialgebra
is equivalently characterized by the deformations-quasiclassical
limits process for a matched pair of Poisson algebras (see (\ref{diag1}) for a summary commutative diagram). Similarly,
 the quasiclassical limit of a
deformation of a standard Manin triple of commutative associative
algebras is a standard Manin triple of Poisson algebras, giving
another equivalent characterization of the
deformations-quasiclassical limits process for a Poisson
bialgebra (see (\ref{diag2}) for a summary commutative diagram).

\subsection{Organization of the paper}
The paper is outlined as follows.

In Section \mref{S2}, we give the notion of an antisymmetric
infinitesimal bialgebra deformation and determine the
corresponding quasiclassical limit. We construct antisymmetric
infinitesimal bialgebra deformations via coherent derivations.

In Section \mref{S3}, we introduce the notion of an associative
matched pair deformation and determine the corresponding
quasiclassical limit, characterizing the
deformations-quasiclassical limits process for a Poisson bialgebra
equivalently. We also study the correspondence of the
antisymmetric infinitesimal bialgebra deformations induced by
coherent derivations with the associative matched pair deformations
induced by derivations.

In Section \mref{S4}, we introduce the notion of an associative
Manin triple deformation and determine the corresponding
quasiclassical limit, which gives another equivalent
characterizations of the deformations-quasiclassical limits
process for a Poisson bialgebra.
\smallskip

\noindent {\bf Notations.} Throughout this paper, we fix a ground
field $\bfk$ of characteristic $0$ for vector spaces, algebras and
tensor products. We also fix the power series ring
$\mathbf{K}=\bfk[[h]]$ as the ground algebra for topological
modules, topological algebras and topological
 tensor products.
For a vector space $A$ with an operation $\circ$, the linear maps
$L_\circ, R_\circ: A\rightarrow {\rm End}_{\bf k}(A)$ are
respectively defined by \vspb
$$L_\circ(x)y:=x\circ y,\;\;R_\circ(x)y:=y\circ x,\;\; x,y\in A.
\vspa
$$
In particular, when $(A,\circ=[,])$ is a Lie algebra, we use the
usual notion of the adjoint operator ${\rm ad}_{[,]}(x)(y):=[x,y]$
for all $x,y\in A$.
Let $\mathbb{N}$ and $\mathbb{Z}^{+}$ denote the set of natural numbers and the set of positive integers respectively. 
\vspb

\section{Antisymmetric infinitesimal bialgebra deformations }\mlabel{S2}
We introduce the notions of an antisymmetric infinitesimal bialgebra deformation and the corresponding
quasiclassical limit. Then we show that the quasiclassical limit is a Poisson bialgebra. Finally, we construct antisymmetric infinitesimal bialgebra deformations via coherent derivations.

\subsection{Associative deformations, coassociative deformations and the corresponding quasiclassical limits}
\mlabel{ss:deformgen}

We  recall the needed notations for (formal) deformations as follows \cite{ES}. Let
 $V$ be a vector space. Consider the set of formal series
\[V_h:=V[[h]]:= \Big\{\sum_{s=0}^{\infty}v_{s}~h^{s}~\Big|~v_{s}\in V\Big\}.
\]
It is clear that $V[[h]]$ is a $\mathbf{K}$-module.
For a fixed real number $C>1$, define the \textbf{$h$-adic norm} $\|~\|$ on $V[[h]]$ by
\[\Big\|\sum_{s=0}^{\infty}v_{s}~h^{s}\Big\|:= C^{-m},\]
where $m$ is the smallest integer such that $v_{m}\neq 0$.
A $\mathbf{K}$-module that is $\bf K$-linearly isomorphic to $V[[h]]$ for a vector space $V$ is called a \textbf{topologically free $\mathbf{K}$-module}. In particular, $V_h$ is a topologically  free $\bfK$-module.

 Let $A_{h}$ be a topologically free $\mathbf{K}$-module. If there is a $\mathbf{K}$-bilinear operation $\circ_{h}$ on $A_{h}$ such that
 \vspb
  \[(x_h\circ_{h} y_h)\circ_{h} z_h=x_h\circ_{h}(y_h\circ_{h} z_h),\;\; \forall x_h,y_h,z_h\in A_h,\]
  then $(A_{h},\circ_{h})$ is called a \textbf{topologically free associative algebra}.
\begin{defi}
  A \textbf{deformation of an associative algebra}$(A,\circ)$ is a topologically free associative algebra $(A_{h},\circ_{h})$ such that
    \begin{equation}
    x\circ_{h}y \equiv x\circ y\pmod h,\;\;\forall x,y\in A.\mlabel{defa2}
    \end{equation}
\end{defi}
A deformation $(A_{h},\circ_{h})$ of an associative algebra
$(A,\circ)$ is also called an \textbf{associative algebra
deformation} or simply an \textbf{associative deformation} of
$(A,\circ)$.

\begin{defi}
Let $(A_{h},\circ_{h})$ be an associative deformation of a commutative associative algebra $(A,\circ)$. Let $\{,\}$ be an operation on $A$ such that
\[\{x,y\}\equiv\frac{x\circ_{h}y-y\circ_{h}x}{h}\pmod{h},\;\;\forall  x, y\in A.\]
Then $(A,\{,\},\circ)$ is called the
\textbf{quasiclassical limit (\qcl) of the associative
deformation} $(A_{h},\circ_{h})$.
\end{defi}

Recall that a \textbf{Poisson algebra}
$(P,\{,\},\circ)$ consists of a Lie algebra $(P,\{,\})$ and a
commutative associative algebra $(P,\circ)$ such that
\vspb
\begin{equation}\mlabel{PostP1}
    \{x,y\circ z\}=\{x,y\}\circ z+y\circ\{x,z\},\;\;\forall x,y,z\in P.
\end{equation}
The following result is well known.

\begin{thm}{\rm(\mcite{ES})}\label{thacl}
Let $(A_{h},\circ_{h})$ be an associative deformation of a commutative associative algebra $(A,\circ)$. Then its \qcl $(A, \{,\},\circ)$ is a Poisson algebra.
\end{thm}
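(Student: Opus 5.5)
We write the deformed product on elements of $A$ as a power series
\[
x\circ_h y=\sum_{s\ge 0} \mu_s(x,y)\,h^s,\qquad \mu_0=\circ,
\]
where each $\mu_s:A\times A\to A$ is bilinear. Since $\circ_h$ is $\bfK$-bilinear and continuous in the $h$-adic topology, it is determined by its values on $A$. Because $\circ$ is commutative, $x\circ_h y-y\circ_h x=h(\mu_1(x,y)-\mu_1(y,x))+O(h^2)$. Hence the \qcl bracket is
\[
\{x,y\}=\mu_1(x,y)-\mu_1(y,x),
\]
which is well defined and bilinear. Antisymmetry of $\{,\}$ is immediate from this formula, and $(A,\circ)$ is commutative associative by hypothesis. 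Two things remain: the Jacobi identity and the Leibniz rule \eqref{PostP1}.

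For both, I will work with the commutator $[a,b]_h:=a\circ_h b-b\circ_h a$ in the associative algebra $(A_h,\circ_h)$. The commutator of any associative algebra satisfies two identities:
\[
[x,y\circ_h z]_h=[x,y]_h\circ_h z+y\circ_h[x,z]_h,
\]
\[
[x,[y,z]_h]_h+[y,[z,x]_h]_h+[z,[x,y]_h]_h=0.
\]
For $x,y\in A$ we have $[x,y]_h=h\,\{x,y\}+h^2 c(x,y)$ with $c(x,y)\in A_h$.

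For the Leibniz rule, I compare the coefficients of $h$ on both sides of the first identity. On the right, $[x,y]_h\circ_h z=h\{x,y\}\circ z+O(h^2)$, and similarly for the second term. On the left, $y\circ_h z=y\circ z+h\mu_1(y,z)+\cdots$, and the bracket of $x$ with any element of $A_h$ is itself $O(h)$. So the $h$-coefficient on the left is $\{x,y\circ z\}$, since the $h\mu_1(y,z)$ term only contributes at order $h^2$. Equating gives $\{x,y\circ z\}=\{x,y\}\circ z+y\circ\{x,z\}$.

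For the Jacobi identity, I compare the coefficients of $h^2$. The key point is that $[x,h^2c]_h=h^2[x,c]_h=O(h^3)$, because the commutator of any two elements of $A_h$ is $O(h)$: its $h^0$ part is the commutator for $\circ$, which vanishes. Therefore $[x,[y,z]_h]_h=h^2\{x,\{y,z\}\}+O(h^3)$. Dividing the second identity by $h^2$ and reducing mod $h$ yields the Jacobi identity.

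The only mildly delicate step is this bookkeeping of the orders of $h$, specifically that the commutator of arbitrary elements of $A_h$ (not just elements of $A$) vanishes modulo $h$. That follows from $\bfK$-bilinearity, which extends commutativity of $\circ$ mod $h$ to all of $A_h=A[[h]]$. It also uses that $h$ is not a zero divisor on the topologically free module, which justifies dividing by $h$ and $h^2$.
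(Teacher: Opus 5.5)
Your proof is correct. The paper gives no proof of its own here and simply cites the result from \cite{ES}. Your argument is the standard one: the Leibniz rule comes from the $h^1$-coefficient of $[x,y\circ_h z]_h=[x,y]_h\circ_h z+y\circ_h[x,z]_h$, and the Jacobi identity from the $h^2$-coefficient of the commutator Jacobi identity, both resting on $[A_h,A_h]_h\subset hA_h$. It is also the exact dual of the paper's own proof of Theorem~\ref{qclco}, where the Lie coalgebra identity and the Poisson compatibility are read off from the second- and first-order terms of identities for $\Delta_h-\sigma_{\mathbf{K}}\Delta_h$.
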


Let $V[[h]]$ and $W[[h]]$ be topologically free $\mathbf{K}$-modules. The \textbf{topological tensor product} $V[[h]]\hat{\otimes}W[[h]]$ is the completion of
$V[[h]]\otimes_{\mathbf{K}}W[[h]]$ under the $h$-adic norm. So the topological tensor product is the completion of the $\mathbf{K}$-bilinear extension of the usual (algebraic) tensor product.

Let $A$ be a vector space. Define the \textbf{twisting operator} $\sigma:A\ot A\to A\ot A$ by
$$\sigma (x\otimes y)=y\otimes x, \quad\forall x,y\in A.
$$
Similarly, define the \textbf{topological twisting operator} $\sigma_{\bf K}:A_h\hat{\ot} A_h\to A_h\hat{\ot} A_h$ by
$$\sigma_{\bf K}(x_h{\hat \otimes} y_h)=y_h{\hat \otimes} x_h,\;\;\forall x_h,y_h\in A_h.
$$
\begin{defi}
   Let $A$ be a vector space, $\Delta:A\to A\ot A$ be a linear map. If
    \[(\Delta\ot \id_{A})\Delta=(\id_{A}\otimes\Delta)\Delta,\]
    then we call $(A,\Delta)$ a\textbf{ coassociative coalgebra}. Moreover, if $\Delta=\sigma\Delta$, then we say that $(A,\Delta)$ is
    \textbf{cocommutative}.

    Let $A_h$ be a topologically free $\bf K$-module, $\Delta_h:A_h\to A_h\hat{\ot}A_h$ be a $\bf K$-linear map.
    If $(A_h,\Delta_h)$ satisfies the following identity, then we call $(A_h,\Delta_h)$ a \textbf{topologically free coassociative coalgebra}.
    \begin{equation}\label{eq:3}
    (\Delta_h\hat{\otimes}\id_{A_h})\Delta_h=(\id_{A_h}\hat{\otimes}\Delta_h)\Delta_h.
    \end{equation}

    Let $(A,\Delta)$ be a coassociative coalgebra. If a topologically free coassociative coalgebra $(A_h,\Delta_h)$ satisfies
    \begin{eqnarray}
    \Delta_{h}(x) &\equiv& \Delta (x)\pmod{h},\;\;\forall x\in A,\label{cmh}
    \end{eqnarray}
    then we call $(A_{h},\Delta_{h})$ a \textbf{coassociative deformation} of $(A,\Delta)$.
    Moreover, if $(A,\Delta)$ is cocommutative, then $(A,\delta, \Delta)$ is called the \textbf{quasiclassical limit {\rm(}\qcl{\rm)} of the coassociative
deformation} $(A_{h},\Delta_{h})$, where
    $\delta :A\to A\ot A$ is defined by
    \begin{eqnarray}
    \delta(x)\equiv\frac{\Delta_h(x ) - \sigma_{\mathbf{K}}(\Delta_h(x))}{h} \pmod{h},\;\;\forall x\in A.
    \end{eqnarray}
\end{defi}
Let $\g$ be a vector space, $\delta: \g \to \g \otimes \g$ be a linear map, if $\delta$ satisfies $\delta = -\sigma \delta$ and the following identity
\begin{equation}
(\id_{\g\otimes\g\otimes\g} + \tau + \tau^2)(\id_{\g} \otimes \delta)\delta = 0,\label{lieco}
\end{equation}
where $$\tau:\g \otimes \g \otimes \g\to\g \otimes \g \otimes
\g,\; \tau(x \otimes y\otimes z):= z\otimes x\otimes y, \;\;
\forall x, y, z \in \g,$$ then $(\g, \delta)$ is called a
\textbf{Lie coalgebra}. Let $(\g, [\ ,\ ])$ be a Lie algebra and
$(\g, \delta)$ be a Lie coalgebra. If
\begin{equation}
\delta([x, y])= (\ad_{[,]}(x) \otimes\id_{\g} +\id_{\g}
\otimes\ad_{[,]}(x) )\delta(y) -(\ad_{[,]}(y) \otimes\id_{\g}
+\id_{\g} \otimes\ad_{[,]}(y) )\delta(x), \;\;\forall x,y\in
\g,\label{liebi}
\end{equation}
then we call $(\g, [\ ,\ ], \delta)$ a \textbf{Lie bialgebra}.

    Let $(A,\delta)$ be a Lie coalgebra and $(A,\Delta)$ be a cocommutative coassociative coalgebra. If
    \begin{equation}
    (\mathrm {id}_{A} \otimes \Delta) \delta(x)=(\delta \otimes \mathrm {id}_{A}) \Delta(x)+(\sigma \otimes \mathrm {id}_{A})(\mathrm {id}_{A} \otimes \delta) \Delta(x), \quad \forall x \in A,\label{pco}
    \end{equation}
    then we call $(A, \delta, \Delta)$ a \textbf{Poisson coalgebra}.

\begin{thm}
    \label{qclco}
    Let $(A,\Delta)$ be a cocommutative coassociative coalgebra, $(A_{h},\Delta_{h})$ be the coassociative deformation and $(A,\delta, \Delta)$ be the QCL. Then $(A,\delta, \Delta)$ is a Poisson coalgebra.
\end{thm}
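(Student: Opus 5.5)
We dualize the usual argument behind Theorem~\ref{thacl}. Write $\Delta_h(x)=\Delta(x)+h\Delta_1(x)+h^2\Delta_2(x)+\cdots$ for $x\in A$, with $\Delta_i:A\to A\ot A$ linear and each $\Delta_h$ extended $\bfK$-linearly and continuously. Since $\sigma\Delta=\Delta$, the definition gives
$$\delta=\Delta_1-\sigma\Delta_1 .$$
Hence $\delta=-\sigma\delta$ is immediate. Cocommutativity and coassociativity of $\Delta$ are assumed. It remains to prove the co-Jacobi identity \eqref{lieco} and the compatibility \eqref{pco}. Both will come from expanding the coassociativity \eqref{eq:3} of $\Delta_h$ order by order in $h$:
$$\text{order }h:\quad (\Delta_1\ot\id)\Delta+(\Delta\ot\id)\Delta_1=(\id\ot\Delta_1)\Delta+(\id\ot\Delta)\Delta_1 .$$

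\emph{Compatibility \eqref{pco}.} This is the coalgebraic Leibniz rule, the dual of $\{x,y\circ z\}=\{x,y\}\circ z+y\circ\{x,z\}$. We would mirror the algebra proof, which works with the associator identity for the commutator.

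Let $\Delta_h^{op}=\sigma_{\bfK}\Delta_h$. This is again coassociative, and
$$\Delta_h-\Delta_h^{op}=h\,\delta+O(h^2).$$
The dual of the algebraic identity $[x,yz]=[x,y]z+y[x,z]$, which holds in any associative algebra, is obtained by permuting tensor legs:
$$(\id\ot\Delta_h)(\Delta_h-\Delta_h^{op})=\bigl((\Delta_h-\Delta_h^{op})\ot\id\bigr)\Delta_h+(\sigma\ot\id)\bigl(\id\ot(\Delta_h-\Delta_h^{op})\bigr)\Delta_h ,$$
up to the appropriate leg permutations. One checks this by expanding each term with coassociativity and comparing which permutations of the legs of $(\Delta_h\ot\id)\Delta_h$ occur on each side. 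Dividing by $h$ and reducing mod $h$ replaces $\Delta_h$ by $\Delta$ and $(\Delta_h-\Delta_h^{op})/h$ by $\delta$, which yields \eqref{pco}.

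The main obstacle is getting the correct permutation identity, so that the reduction lands exactly on the form \eqref{pco}, with the factor $(\sigma\ot\id)$ in the right place. As a check I would verify it in Sweedler notation. Alternatively, one can use the order-$h$ identity directly, together with its images under the $S_3$ leg permutations and the cocommutativity of $\Delta$; the $\Delta_1$-terms then combine into $\delta$-terms. The symmetric parts of $\Delta_1$ must cancel in this process, and that cancellation is the delicate point.

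\emph{Co-Jacobi \eqref{lieco}.} Again mirror the algebra case, where Jacobi follows because the commutator of an associative product is Lie. The commutator coproduct $\Delta_h-\Delta_h^{op}$ of a coassociative coproduct satisfies co-Jacobi exactly: the identity
$$(1+\tau+\tau^2)\bigl(\id\ot(\Delta_h-\Delta_h^{op})\bigr)(\Delta_h-\Delta_h^{op})=0$$
is the dual of the Jacobi identity for $[a,b]=ab-ba$, and is proved by expanding into the six leg-permutations of the iterated coproduct, which cancel in pairs. Since $\Delta_h-\Delta_h^{op}=h\,\delta+O(h^2)$, the left side is $h^2(1+\tau+\tau^2)(\id\ot\delta)\delta+O(h^3)$. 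Dividing by $h^2$ and reducing mod $h$ gives \eqref{lieco}.

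This uses only that $\Delta_h-\Delta_h^{op}$ is divisible by $h$, and that the divided series reduces mod $h$ to $\delta$; both follow from the cocommutativity of $\Delta$. All manipulations are legitimate in the completed tensor products, since every map involved is $h$-adically continuous and $\bfK$-linear.
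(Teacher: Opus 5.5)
Your proposal is correct and follows essentially the same route as the paper: both prove exact identities for the commutator coproduct $\Delta_h-\sigma_{\bf K}\Delta_h$ of the coassociative $\Delta_h$, namely the pairwise cancellation of the six leg-permutations under $\id+\tau_{\bf K}+\tau_{\bf K}^2$ (read off at order $h^2$ for \eqref{lieco}) and the Leibniz-type identity (read off at order $h$ for \eqref{pco}). Your displayed compatibility identity in fact holds exactly as written, with no further leg permutations needed: it is precisely the paper's \eqref{dpcob}, and a Sweedler check gives $x_1\ot x_2\ot x_3-x_3\ot x_1\ot x_2$ on both sides.
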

\begin{proof}
   Let $x\in A$. Set
    \begin{equation}
(\id_{A_h}\hat{\otimes}\Delta_h)\Delta_h(x)=\sum_{s}a_{h,s}\hat{\otimes}b_{h,s}\hat{\otimes}c_{h,s},\label{cox1.1}
    \end{equation}
    where $a_{h,s},b_{h,s},c_{h,s}\in A_h$. Since
    \begin{equation}
    (\Delta_h\hat{\otimes}\id_{A_h})\Delta_h(x)=(\id_{A_h}\hat{\otimes}\Delta_h)\Delta_h(x),\label{coa}
    \end{equation}
    we obtain
    \begin{equation}(\Delta_h\hat{\otimes}\id_{A_h})\Delta_h(x)=\sum_{s}a_{h,s}\hat{\otimes}b_{h,s}\hat{\otimes}c_{h,s}.\label{cox2}
    \end{equation}
    We deduce from (\ref{cox1.1}) that
    \begin{equation}
    (\id_{A_h}\hat{\otimes}\sigma_{\bf K}\Delta_h)\Delta_h(x)=\sum_{s}a_{h,s}\hat{\otimes}c_{h,s}\hat{\otimes}b_{h,s}.\label{cox1.2}
    \end{equation}
    By (\ref{cox2}), we have
    \begin{eqnarray}
    (\id_{A_h}\hat{\otimes}\Delta_h)\sigma_{\bf K}\Delta_h(x)&=&\sum_{s}c_{h,s}\hat{\otimes}a_{h,s}\hat{\otimes}b_{h,s},\label{cox1.3}\\
    (\id_{A_h}\hat{\otimes}\sigma_{\bf K}\Delta_h)\sigma_{\bf K}\Delta_h(x)&=&\sum_{s}c_{h,s}\hat{\otimes}b_{h,s}\hat{\otimes}a_{h,s}.\label{cox1.4}
    \end{eqnarray}
    Computing $(\ref{cox1.1})-(\ref{cox1.2})-(\ref{cox1.3})+(\ref{cox1.4})$, we get
    \begin{equation}
    \begin{split}
        &(\id_{A_h}\hat{\otimes}(\Delta_h-\sigma_{\bf K}\Delta_h))(\Delta_h(x)-\sigma_{\bf K}\Delta_h(x))=\sum_{s}a_{h,s}\hat{\otimes}b_{h,s}\hat{\otimes}c_{h,s}\\
    &-\sum_{s}a_{h,s}\hat{\otimes}c_{h,s}\hat{\otimes}b_{h,s}-\sum_{s}c_{h,s}\hat{\otimes}a_{h,s}\hat{\otimes}b_{h,s}+\sum_{s}c_{h,s}\hat{\otimes}b_{h,s}\hat{\otimes}a_{h,s}.
    \end{split}
\label{cox3.1}
    \end{equation}
    Define a $\bf K$-linear map $\tau_{\bf K}:A_h\hat{\otimes} A_h\hat{\otimes} A_h\to A_h\ho A_h\ho A_h$ by
    \[\tau_{\bf K}(x_h\hat{\otimes} y_h\hat{\otimes} z_h):=z_h\hat{\otimes} x_h\hat{\otimes} y_h,\;\forall x_h,y_h,z_h\in A_h.\]
    By applying $\tau_{\bf K}$ to (\ref{cox3.1}), we obtain
    \begin{equation}
    \begin{split}
        &\tau_{\bf K}((\id_{A_h}\hat{\otimes}(\Delta_h-\sigma_{\bf K}\Delta_h))(\Delta_h(x)-\sigma_{\bf K}\Delta_h(x)))=\sum_{s}c_{h,s}\hat{\otimes}a_{h,s}\hat{\otimes}b_{h,s}\\
    &-\sum_{s}b_{h,s}\hat{\otimes}a_{h,s}\hat{\otimes}c_{h,s}-\sum_{s}b_{h,s}\hat{\otimes}c_{h,s}\hat{\otimes}a_{h,s}+\sum_{s}a_{h,s}\hat{\otimes}c_{h,s}\hat{\otimes}b_{h,s}.
    \end{split}
    \label{cox3.2}
    \end{equation}
    By applying $\tau_{\bf K}$ to (\ref{cox3.2}), we have
    \begin{equation}
    \begin{split}
    &\tau_{\bf K}^2((\id_{A_h}\hat{\otimes}(\Delta_h-\sigma_{\bf K}\Delta_h))(\Delta_h(x)-\sigma_{\bf K}\Delta_h(x))=\sum_{s}b_{h,s}\hat{\otimes}c_{h,s}\hat{\otimes}a_{h,s}\\
&-\sum_{s}c_{h,s}\hat{\otimes}b_{h,s}\hat{\otimes}a_{h,s}-\sum_{s}a_{h,s}\hat{\otimes}b_{h,s}\hat{\otimes}c_{h,s}+\sum_{s}b_{h,s}\hat{\otimes}a_{h,s}\hat{\otimes}c_{h,s}.
    \end{split}
\label{cox3.3}
    \end{equation}
    Adding (\ref{cox3.1}), (\ref{cox3.2}) and (\ref{cox3.3}) together yields
    \begin{eqnarray}
    (\id_{A_h\ho A_h\ho A_h}+\tau_{\bf K}+\tau_{\bf K}^2)(\id_{A_h}\hat{\otimes}(\Delta_h-\sigma_{\bf K}\Delta_h))(\Delta_h(x)-\sigma_{\bf K}\Delta_h(x))=0.\label{cox4}
    \end{eqnarray}
    Taking the second order terms of $h$ in (\ref{cox4}), we conclude that $(A,\delta)$ is a Lie coalgebra.

    Note that
    \begin{align}
    (\id_{A_h}\hat{\otimes}\Delta_h)(\Delta_h(x)-\sigma_{\bf K}\Delta_h(x))
    &\overset{(\ref{coa})}{=}(\Delta_h\ho\id_{A_h})\Delta_h(x)-(\id_{A_h}\hat{\otimes}\Delta_h)\sigma_{\bf K}\Delta_h(x)\nonumber\\
    &\overset{\hphantom{(\ref{coa})}}{=}(\Delta_h\ho\id_{A_h})\Delta_h(x)-(\sigma_{\bf K}\Delta_h\ho\id_{A_h})\Delta_h(x)\nonumber\\
    &\qquad+(\sigma_{\bf K}\Delta_h\ho\id_{A_h})\Delta_h(x)-(\id_{A_h}\hat{\otimes}\Delta_h)\sigma_{\bf K}\Delta_h(x)\nonumber\\
    &\overset{\hphantom{(\ref{coa})}}{=}(\Delta_h\ho\id_{A_h})\Delta_h(x)-(\sigma_{\bf K}\Delta_h\ho\id_{A_h})\Delta_h(x)\nonumber\\
    &\qquad+(\sigma_{\bf K}\ho\id_{A_h})(\Delta_h\ho\id_{A_h})\Delta_h(x)-(\id_{A_h}\hat{\otimes}\Delta_h)\sigma_{\bf K}\Delta_h(x)\nonumber\\
    &\overset{(\ref{coa})}{=}(\Delta_h\ho\id_{A_h})\Delta_h(x)-(\sigma_{\bf K}\Delta_h\ho\id_{A_h})\Delta_h(x)\nonumber\\
    &\qquad+(\sigma_{\bf K}\ho\id_{A_h})(\id_{A_h}\ho\Delta_h)\Delta_h(x)-(\id_{A_h}\hat{\otimes}\Delta_h)\sigma_{\bf K}\Delta_h(x).\label{dpcob1}
    \end{align}
    Comparing (\ref{cox1.2}) and (\ref{cox1.3}), we show that
    \begin{equation}
    (\id_{A_h}\hat{\otimes}\Delta_h)\sigma_{\bf K}\Delta_h(x)=(\sigma_{\bf K}\ho\id_{A_h})(\id_{A_h}\ho\sigma_{\bf K}\Delta_h)\Delta_h(x).\label{dpcob2}
    \end{equation}
    Inserting (\ref{dpcob2}) to (\ref{dpcob1}), we obtain
    \begin{equation}
    \begin{split}
    (\id_{A_h}\hat{\otimes}\Delta_h)(\Delta_h(x)-\sigma_{\bf K}\Delta_h(x))=&
    ((\Delta_h-\sigma_{\bf K}\Delta_h)\ho\id_{A_h})\Delta_h(x)\\
    &+(\sigma_{\bf K}\ho\id_{A_h})(\id_{A_h}\ho(\Delta_h-\sigma_{\bf K}\Delta_h))\Delta_h(x).
    \end{split}
    \label{dpcob}
    \end{equation}
    So (\ref{pco}) follows from taking the first order terms of $h$ in
    (\ref{dpcob}). Thus $(A, \delta, \Delta)$ is a Poisson coalgebra.
\end{proof}

\subsection{Antisymmetric infinitesimal bialgebra deformations and the quasiclassical limits}
\begin{defi}
    {\rm (\cite{Bai})}
    Let $(A, \circ)$ be an associative algebra and $(A, \Delta)$ be a coassociative coalgebra. Then the triple $(A, \circ, \Delta)$ is called an \textbf{ antisymmetric infinitesimal (ASI) bialgebra}
    if  for all $x,y\in A$,
    \begin{eqnarray}
&\Delta (x \circ y)=(\mathrm{id}_{A}\otimes L_{\circ}(x)) \Delta(y)+(R_{\circ}(y) \otimes \mathrm{id}_{A}) \Delta(x),&\label{asi1}\\
&(L_{\circ}(y) \otimes \mathrm{id}_{A}- \mathrm{id}_{A}\otimes
R_{\circ}(y)) \Delta(x)+\sigma((L_{\circ}(x) \otimes
\mathrm{id}_{A}- \mathrm{id}_{A} \otimes R_{\circ}(x))
\Delta(y))=0.&\label{asi2}
\end{eqnarray}
\end{defi}

\begin{defi}\label{defipb}
    {\rm (\cite{NB})}
     Let $(A,\{,\}, \circ)$ be a Poisson algebra, $(A,\{,\}, \delta)$ be a Lie bialgebra, $(A,\delta,\Delta)$ be a Poisson coalgebra, and $(A, \circ, \Delta)$ be a commutative and cocommutative ASI bialgebra.
     Then the quintuple $(A,\{,\}, \circ, \delta, \Delta)$ is called a \textbf{Poisson bialgebra}
     if for all $x,y\in A$,
     \begin{eqnarray}
    \delta(x \circ y)=(L_{\circ}(y) \otimes \mathrm{id}_{A}) \delta(x)+(L_{\circ}(x) \otimes \mathrm{id}_{A}) \delta(y)
        +(\mathrm{id}_{A} \otimes \ad_{\{,\}}(x)) \Delta(y)+(\mathrm{id}_{A} \otimes \ad_{\{,\}}(y)) \Delta(x),
     \label{pb1}    \end{eqnarray}
\begin{eqnarray}
        \Delta(\{x,y\})=(\ad_{\{,\}}(x)\ot\mathrm{id}_{A}+\mathrm{id}_{A} \otimes \ad_{\{,\}}(x)) \Delta(y)
     +(L_{\circ}(x) \otimes \mathrm{id}_{A}-\mathrm{id}_{A} \otimes L_{\circ}(x))
     \delta(y).
        \label{pb2}
     \end{eqnarray}
\end{defi}

\begin{rmk}
    Note that there is also a notion of Poisson bialgebras in \cite{KT}, in which the
    comultiplications are homomorphisms of Poisson algebras.
    Obviously, it is different from Definition~\ref{defipb}.
\end{rmk}

Next, we introduce the notions about ASI bialgebra deformations.
\begin{defi}
    Let $(A_{h},\circ_{h})$ be a topologically free associative algebra and $(A_h,\Delta_{h})$ be a topologically free coassociative coalgebra. Then we call the triple $(A_{h},\circ_{h},\Delta_{h})$ a \textbf{topologically free ASI bialgebra} if for all
    $x_h,y_h\in A_h$,
    \begin{eqnarray}
&\Delta_h (x_h \circ_h y_h)=(\mathrm{id}_{A_h}\ho L_{\circ_h}(x_h)) \Delta_h(y_h)+(R_{\circ_h}(y_h) \ho \mathrm{id}_{A_h}) \Delta_h(x_h),&\label{tib1}
    \\
   & (L_{\circ_h}(y_h) \ho \mathrm{id}_{A_h}- \mathrm{id}_{A_h}\otimes R_{\circ_h}(y_h)) \Delta_h(x)+\sigma_{\bf K}((L_{\circ_h}(x_h) \ho\mathrm{id}_{A_h}- \mathrm{id}_{A_h} \ho R_{\circ_h}(x_h)) \Delta_h(y_h))=0.&\label{tasib1}
    \end{eqnarray}
Let $(A,\circ,\Delta)$ be an ASI bialgebra and
$(A_{h},\circ_{h},\Delta_{h})$ be a topologically free ASI
bialgebra. If $(A_{h},\circ_{h})$ and $(A_{h},\Delta_{h})$ are an
associative deformation and a coassociative deformation of
$(A,\circ)$ and $(A,\Delta)$, respectively, then we call
$(A_{h},\circ_{h},\Delta_{h})$ an \textbf{ASI bialgebra
deformation} of $(A,\circ,\Delta)$. Moreover, if
$(A,\circ,\Delta)$ is a commutative and cocommutative ASI
bialgebra, $(A,\{,\},\circ)$ and $(A,\delta,\Delta)$ are QCLs of
$(A_{h},\circ_{h})$ and $(A_{h},\Delta_{h})$, respectively, then
the quintuple $(A,\{,\}, \circ,\delta,\Delta)$ is called the
\textbf{quasiclassical limit (QCL) of the ASI bialgebra
deformation} $(A_{h},\circ_{h},\Delta_{h})$.
\end{defi}
\begin{thm}\label{thm-limasi}
    Let $(A_{h},\circ_{h},\Delta_{h})$ be an ASI bialgebra deformation of a  commutative and cocommutative ASI bialgebra $(A,\circ,\Delta)$ and $(A,\{,\}, \circ,\delta,\Delta)$ be the QCL. Then $(A,\{,\}, \circ,\delta,\Delta)$ is a Poisson bialgebra.
\end{thm}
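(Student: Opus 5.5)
By Theorem~\ref{thacl}, $(A,\{,\},\circ)$ is a Poisson algebra. By Theorem~\ref{qclco}, $(A,\delta,\Delta)$ is a Poisson coalgebra. Since $(A,\circ,\Delta)$ is a commutative and cocommutative ASI bialgebra by hypothesis, what remains is to verify three identities: the Lie bialgebra compatibility (\ref{liebi}) for $(A,\{,\},\delta)$, and the two mixed identities (\ref{pb1}) and (\ref{pb2}). The strategy throughout is to write down an exact identity in the topologically free ASI bialgebra $(A_h,\circ_h,\Delta_h)$ built from the commutators $x\circ_h y-y\circ_h x$ and $\Delta_h-\sigma_{\bf K}\Delta_h$, and then read off a suitable order of $h$. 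This is the same approach used in the proof of Theorem~\ref{qclco}.

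For $x,y\in A$, write
\[
x\circ_h y=x\circ y+h\mu_1(x,y)+O(h^2),\qquad \Delta_h(x)=\Delta(x)+h\Delta_1(x)+O(h^2).
\]
Then $\{x,y\}=\mu_1(x,y)-\mu_1(y,x)$ and $\delta=\Delta_1-\sigma\Delta_1$.

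\emph{Identity (\ref{pb2}).} Subtract (\ref{tib1}) with $x,y$ swapped from (\ref{tib1}) itself. This gives an exact expression for $\Delta_h(x\circ_h y-y\circ_h x)$. The zeroth order vanishes because $\circ$ is commutative. At first order, the left side is $\Delta(\{x,y\})$. The right side splits into two kinds of terms:
\begin{itemize}
\item first-order parts of $L_{\circ_h},R_{\circ_h}$ acting on $\Delta$, which appear in the combination $\mu_1$, and
\item $L_\circ,R_\circ$ acting on $\Delta_1$.
\end{itemize}
The terms of the first kind are not yet antisymmetrized. To antisymmetrize them, I would also use (\ref{tasib1}) at first order, possibly in combination with $\sigma$ applied to (\ref{tib1}), so that $\mu_1$ appears only through $\{,\}$ and $\Delta_1$ only through $\delta$. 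Commutativity and cocommutativity (so $L_\circ=R_\circ$ and $\sigma\Delta=\Delta$) should then produce exactly $\ad_{\{,\}}(x)\ot\id+\id\ot\ad_{\{,\}}(x)$ on $\Delta(y)$ plus $(L_\circ(x)\ot\id-\id\ot L_\circ(x))\delta(y)$.

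\emph{Identity (\ref{pb1}).} Apply $\id-\sigma_{\bf K}$ to (\ref{tib1}), symmetrize in $x,y$, and take the first order. Cocommutativity kills the zeroth order. In the resulting expression, the $\Delta_1$ contributions assemble into $L_\circ(y)\ot\id$ and $L_\circ(x)\ot\id$ acting on $\delta$, and the $\mu_1$ contributions assemble into $\id\ot\ad_{\{,\}}$ acting on $\Delta$. Again, (\ref{tasib1}) at first order is what eliminates the non-antisymmetric leftovers.

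\emph{Identity (\ref{liebi}).} Apply $\id-\sigma_{\bf K}$ to the antisymmetrized identity used for (\ref{pb2}) and extract the coefficient of $h^2$. On the left this gives $h^2\delta(\{x,y\})$. On the right, the leading terms are $\ad_{\{,\}}$ acting on $\delta$, coming from products of first-order parts. The remaining second-order terms involve second-order coefficients of $\circ_h$ and $\Delta_h$, together with $\mu_1,\Delta_1$ acting on zeroth-order data. These must cancel using (\ref{tasib1}) at first and second order, and using (\ref{pb1})/(\ref{pb2}) already established.

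The main obstacle is this $h^2$ extraction for (\ref{liebi}) and the use of (\ref{tasib1}) in general. Non-antisymmetrized first-order parts such as $\mu_1$ and $\Delta_1$ must disappear, and (\ref{tasib1}) is the only tool available for this. The key step is to find the right linear combination of (\ref{tib1}), its $x\leftrightarrow y$ swap, their $\sigma_{\bf K}$-images, and (\ref{tasib1}) that is an exact identity in the operations $[x_h,y_h]_h:=x_h\circ_hy_h-y_h\circ_hx_h$ and $\Delta_h-\sigma_{\bf K}\Delta_h$ alone. Such an identity should be the analogue of the classical fact that $(A_h,[,]_h,\Delta_h-\sigma_{\bf K}\Delta_h)$ satisfies the Lie bialgebra compatibility. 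I would first try to prove this statement directly in $A_h$: using (\ref{tib1}) together with $\sigma_{\bf K}$ of (\ref{tasib1}) to rewrite the $R_{\circ_h}$-terms via $L_{\circ_h}$-terms, one should get exactly that $\Delta_h-\sigma_{\bf K}\Delta_h$ is a $1$-cocycle for the commutator bracket. Since that bracket and cobracket are both $O(h)$, the coefficient of $h^2$ is then (\ref{liebi}), with no further cancellation needed.
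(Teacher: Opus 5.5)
Your plan is essentially the paper's proof. In the paper, (\ref{pb1}) is read off at first order from $(\id-\sigma_{\bf K})$(\ref{tib1}) minus (\ref{tasib1}), and (\ref{pb2}) from (\ref{tib1}) minus its $x\leftrightarrow y$ swap minus (\ref{tasib1}). Likewise, (\ref{liebi}) is the $h^2$-coefficient of the exact cocycle identity (\ref{g7}), which is precisely $(\id-\sigma_{\bf K})$ applied to the identity used for (\ref{pb2}), because $\sigma_{\bf K}$ of (\ref{tasib1}) is its $x\leftrightarrow y$ swap; so the extra second-order cancellations you first anticipated never arise.

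One small caution: the first-order computation gives (\ref{g5}), and its $x\leftrightarrow y$ swap has $\delta$-term $(L_\circ(y)\ot\id_A-\id_A\ot L_\circ(y))\delta(x)$. That is not the $(L_\circ(x)\ot\id_A-\id_A\ot L_\circ(x))\delta(y)$ you predicted from the displayed (\ref{pb2}), which appears to have $x$ and $y$ interchanged in that term.
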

\begin{proof}
    By Theorems~\ref{thacl}~and~\ref{qclco}, $(A,\{,\}, \circ)$ is a Poisson algebra and $(A,\delta, \Delta)$ is a Poisson coalgebra.
Let $x,y\in A$. By (\ref{tib1}) and (\ref{tasib1}), we have
\begin{eqnarray}
\Delta_h (x \circ_h y)=(\mathrm{id}_{A_h}\ho L_{\circ_h}(x))
\Delta_h(y)+(R_{\circ_h}(y) \ho \mathrm{id}_{A_h})
\Delta_h(x),\label{tib}\end{eqnarray}
\begin{eqnarray}
0=(L_{\circ_h}(y) \ho \mathrm{id}_{A_h}- \mathrm{id}_{A_h}\otimes
R_{\circ_h}(y)) \Delta_h(x)+\sigma_{\bf K}((L_{\circ_h}(x)
\ho\mathrm{id}_{A_h}- \mathrm{id}_{A_h} \ho R_{\circ_h}(x))
\Delta_h(y)).\label{tasib}
\end{eqnarray}
Applying $\sigma_{\bf K}$ to (\ref{tib}), we obtain
\begin{equation}
\sigma_{\bf K}(\Delta_h (x \circ_h y))=(L_{\circ_h}(x) \ho \mathrm{id}_{A_h}) \sigma_{\bf K}(\Delta_h(y))+(\mathrm{id}_{A_h}\ho R_{\circ_h}(y)) \sigma_{\bf K}(\Delta_h(x)).\label{ttib}
\end{equation}
Subtracting (\ref{ttib}) from (\ref{tib}), we have
\begin{equation}
\begin{split}
\Delta_h (x \circ_h y)-\sigma_{\bf K}(\Delta_h (x \circ_h y))&=(\mathrm{id}_{A_h}\ho L_{\circ_h}(x)) \Delta_h(y)-(L_{\circ_h}(x) \ho \mathrm{id}_{A_h}) \sigma_{\bf K}(\Delta_h(y))\\
&\quad+(R_{\circ_h}(y) \ho \mathrm{id}_{A_h}) \Delta_h(x)-(\mathrm{id}_{A_h}\ho R_{\circ_h}(y)) \sigma_{\bf K}(\Delta_h(x)).
\end{split}
\label{g1}
\end{equation}
Subtracting (\ref{tasib}) from (\ref{g1}), we have
\begin{eqnarray}
\begin{split}
&\Delta_h (x \circ_h y)-\sigma_{\bf K}(\Delta_h (x \circ_h y))\\&=(\mathrm{id}_{A_h}\ho L_{\circ_h}(x)) (\Delta_h(y)-\sigma_{\bf K}(\Delta_h(y))-(L_{\circ_h}(x) \ho \mathrm{id}_{A_h}-R_{\circ_h}(x) \ho \mathrm{id}_{A_h}) \sigma_{\bf K}(\Delta_h(y))\\
&\quad+(R_{\circ_h}(y) \ho \mathrm{id}_{A_h}-L_{\circ_h}(y) \ho \mathrm{id}_{A_h}) \Delta_h(x)+(\mathrm{id}_{A_h}\ho R_{\circ_h}(y)) (\Delta_h(x)-\sigma_{\bf K}(\Delta_h(x))).
\end{split}
\label{g2}
\end{eqnarray}
Taking the first order terms of $h$ in (\ref{g2}), we obtain (\ref{pb1}).

Exchanging $x,y$ in (\ref{tib}), we have
\begin{equation}
        \Delta_h (y \circ_h x)=(\mathrm{id}_{A_h}\ho L_{\circ_h}(y)) \Delta_h(x)+(R_{\circ_h}(x) \ho \mathrm{id}_{A_h}) \Delta_h(y),\label{htib}
\end{equation}
By the difference between (\ref{tib}) and (\ref{htib}), we show that
\begin{eqnarray}
\Delta_h (x \circ_h y-y\circ_h x)=(\mathrm{id}_{A_h}\ho L_{\circ_h}(x)-R_{\circ_h}(x) \ho \mathrm{id}_{A_h})\Delta_h(y)+(R_{\circ_h}(y) \ho \mathrm{id}_{A_h}-\mathrm{id}_{A_h}\ho L_{\circ_h}(y)) \Delta_h(x).
\label{g3}
\end{eqnarray}
By the difference between (\ref{g3}) and (\ref{tasib}), we get
\begin{equation}
\begin{split}
\Delta_h (x \circ_h y-y\circ_h x)=&(\mathrm{id}_{A_h}\ho L_{\circ_h}(x)-R_{\circ_h}(x) \ho \mathrm{id}_{A_h}) (\Delta_h(y)-\sigma_{\bf K}(\Delta_h(y)))\\
&-(L_{\circ_h}(y) \ho \mathrm{id}_{A_h}-R_{\circ_h}(y) \ho \mathrm{id}_{A_h}-\mathrm{id}_{A_h}\ho L_{\circ_h}(y)+\mathrm{id}_{A_h}\ho R_{\circ_h}(y)) \Delta_h(x).
\end{split}
\label{g4}
\end{equation}
Taking the first order terms of $h$ in (\ref{g4}), we deduce that
\begin{equation}
\Delta(\{x, y\})=(\mathrm{id}_{A} \otimes L_{\circ}(x)-L_{\circ}(x) \otimes \mathrm{id}_{A}) \delta(y)-(\ad_{\{,\}}(y) \otimes \mathrm{id}_{A}+\mathrm{id}_{A} \otimes \ad_{\{,\}}(y)) \Delta(x).
\label{g5}
\end{equation}
Exchanging $x,y$ in ($\ref{g5}$) yields (\ref{pb2}).

Applying $\sigma_{\bf K}$ to ($\ref{g3}$), we have
\begin{equation}
    \begin{split}
    \sigma_{\bf K}(\Delta_h (x \circ_h y-y\circ_h x))&=
    (L_{\circ_h}(x) \ho \mathrm{id}_{A_h})\sigma_{\bf K}( \Delta_h(y))-(\mathrm{id}_{A_h}\ho R_{\circ_h}(x))\sigma_{\bf K}( \Delta_h(y))\\
    &\quad+(\mathrm{id}_{A_h}\ho R_{\circ_h}(y))\sigma_{\bf K}( \Delta_h(x))-(R_{\circ_h}(y) \ho \mathrm{id}_{A_h})\sigma_{\bf K}( \Delta_h(x)).
    \end{split}
    \label{g6}
\end{equation}
Interchanging $x,y$ in ($\ref{tasib}$), we have
\begin{equation}
        0=(L_{\circ_h}(x) \ho \mathrm{id}_{A_h}- \mathrm{id}_{A_h}\otimes R_{\circ_h}(y)) \Delta_h(y)+\sigma_{\bf K}((L_{\circ_h}(y) \ho\mathrm{id}_{A_h}- \mathrm{id}_{A_h} \ho R_{\circ_h}(y)) \Delta_h(x)).
    \label{htasib}
\end{equation}
Computing $(\ref{g3})-(\ref{g6})+(\ref{htasib})-(\ref{tasib})$, we obtain
\begin{equation}
\begin{split}
    &(\Delta_h-\sigma_{\bf K}(\Delta_h)) (x \circ_h y-y\circ_h x)\\&=
((L_{\circ_h}(x)-R_{\circ_h}(x)) \ho \mathrm{id}_{A_h}+\mathrm{id}_{A_h}\ho (L_{\circ_h}(x)-R_{\circ_h}(x)))\Delta_h(y)\\
&\quad-((L_{\circ_h}(x)-R_{\circ_h}(x)) \ho \mathrm{id}_{A_h}+\mathrm{id}_{A_h}\ho (L_{\circ_h}(x)-R_{\circ_h}(x)))\sigma_{\bf K}(\Delta_h(y))\\
&\quad+((L_{\circ_h}(y)-R_{\circ_h}(y)) \ho \mathrm{id}_{A_h}+\mathrm{id}_{A_h}\ho (L_{\circ_h}(y)-R_{\circ_h}(y)))\sigma_{\bf K}(\Delta_h(x))\\
&\quad-((L_{\circ_h}(y)-R_{\circ_h}(y)) \ho \mathrm{id}_{A_h}+\mathrm{id}_{A_h}\ho (L_{\circ_h}(y)-R_{\circ_h}(y)))\Delta_h(x).
\end{split}\label{g7}
\end{equation}
We show that $(A,\{,\},\delta)$ is a Lie bialgebra by taking the second order terms of $h$ in (\ref{g7}). Finally, we conclude that $(A,\{,\}, \circ,\delta,\Delta)$ is a Poisson bialgebra.
\end{proof}
    \subsection{Antisymmetric infinitesimal bialgebra deformations via coherent derivations}\label{sec2.3}
\begin{lem}{\rm{(\mcite{G3})}}\label{Dda}
Let $d_1$ and $d_2$ be commuting derivations on an associative
algebra $(A,\circ)$. Set
\begin{equation}\label{UDF}
    x\circ_{h}y:=\sum_{s=0}^{\infty}(d_1^{s}(x)\circ d_2^{s}(y))\frac{h^{s}}{s!}, \quad \forall x, y\in A,
  \end{equation}
and extend it to $A_{h}$ by $\mathbf{K}$-bilinearity. Then $(A_h,\circ_h)$ is
an associative deformation of $(A,\circ)$, called the \textbf{associative deformation induced by} $(A,\circ, d_1,d_2)$. \mlabel{lemG}
\end{lem}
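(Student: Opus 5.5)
Two things need proof: that $\circ_h$ is a well-defined $\mathbf{K}$-bilinear operation on $A_h$ reducing to $\circ$ modulo $h$, and that it is associative.

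\emph{Well-definedness.} For $x,y\in A$ the right-hand side of (\ref{UDF}) is a formal power series in $h$ with coefficients in $A$, so it lies in $A[[h]]$. For general elements $x_h=\sum_i x_ih^i$ and $y_h=\sum_j y_jh^j$ we set $x_h\circ_h y_h=\sum_{i,j}(x_i\circ_h y_j)h^{i+j}$. Only finitely many terms contribute to each power of $h$, so this is a well-defined $\mathbf{K}$-bilinear operation. The $s=0$ term is $x\circ y$, which gives (\ref{defa2}) immediately.

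\emph{Reduction of associativity.} By $\mathbf{K}$-bilinearity and $h$-adic continuity, it suffices to check $(x\circ_h y)\circ_h z=x\circ_h(y\circ_h z)$ for $x,y,z\in A$, comparing coefficients of each power of $h$. Two facts are used. First, the Leibniz rule for a derivation, $d^n(a\circ b)=\sum_{k}\binom{n}{k}d^k(a)\circ d^{n-k}(b)$. Second, since $d_1$ and $d_2$ commute, $d_1^pd_2^q=d_2^qd_1^p$, so powers can be applied in any order.

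\emph{Expanding both sides.} The left-hand side is
\[
\sum_{s,t}d_1^t\big(d_1^s(x)\circ d_2^s(y)\big)\circ d_2^t(z)\frac{h^{s+t}}{s!\,t!}
=\sum_{s,t}\sum_{k}\binom{t}{k}\, d_1^{s+k}(x)\circ d_1^{t-k}d_2^{s}(y)\circ d_2^t(z)\frac{h^{s+t}}{s!\,t!}.
\]
The right-hand side is
\[
\sum_{s,t}\sum_{l}\binom{s}{l}\, d_1^{s}(x)\circ d_2^{s-l}d_1^{t}(y)\circ d_2^{t+l}(z)\frac{h^{s+t}}{s!\,t!}.
\]
Triple products are unambiguous because $\circ$ is associative.

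\emph{Matching coefficients.} Both sides should equal the symmetric trinomial expression
\[
\sum_{a,b,c}d_1^{a+b}(x)\circ d_1^{c}d_2^{a}(y)\circ d_2^{b+c}(z)\,\frac{h^{a+b+c}}{a!\,b!\,c!}.
\]
On the left, substitute $a=s$, $b=k$, $c=t-k$; then $\binom{t}{k}/(s!\,t!)=1/(a!\,b!\,c!)$. On the right, substitute $b=l$, $c=t$, $a=s-l$; then $\binom{s}{l}/(s!\,t!)=1/(a!\,b!\,c!)$. In each case the derivation exponents on $x$, $y$, $z$ match the symmetric expression once we use $d_1^cd_2^a=d_2^ad_1^c$.

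\emph{Where the care goes.} The only real work is this reindexing. The commutation of $d_1$ and $d_2$ is exactly what allows the mixed powers acting on $y$ to be reordered so the two sides agree. Characteristic $0$ guarantees the factorials are invertible.
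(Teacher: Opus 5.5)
Your proof is correct. The Leibniz expansions, the reindexing of both sides onto the common sum $\sum_{a,b,c} d_1^{a+b}(x)\circ d_1^{c}d_2^{a}(y)\circ d_2^{b+c}(z)\,\frac{h^{a+b+c}}{a!\,b!\,c!}$, and the single use of $d_1d_2=d_2d_1$ on the middle factor all check out. The paper gives no proof of this lemma (it cites \cite{G3}), but your argument is the algebra-side counterpart of the paper's proof of Proposition~\ref{DFC}: there \eqref{coleb} plays the role of your Leibniz rule, and the same triple-sum reindexing with denominators $j!\,k!\,(s-j)!$ is used.
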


By Lemma~\ref{Dda}~and Theorem~\ref{thacl}, we obtain the following result.
\begin{cor}\label{daop}
    Let $d_1,d_2$ be commuting derivations on a commutative associative
algebra $(A,\circ)$ and $(A_h,\circ_h)$ be the associative deformation induced by $(A,\circ, d_1,d_2)$.
    Define a binary operation $\{,\}$ on $A$ by
    \begin{equation}
    \{x,y\}:= d_1(x)\circ d_2(y)-d_2(x)\circ d_1(y),\;\;
    \forall x, y \in A. \label{eq-dp}
    \end{equation}
    Then $(A,\{,\},\circ)$ coincides with the QCL of $(A_h,\circ_h)$. Moreover, $(A,\{,\},\circ)$ is a Poisson algebra, called the \textbf{Poisson algebra induced by} $(A, \circ, d_1,d_2)$.
\end{cor}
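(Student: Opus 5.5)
The plan is to compute the quasiclassical limit directly from the explicit formula \eqref{UDF} and then invoke Theorem~\ref{thacl}. By Lemma~\ref{Dda}, $(A_h,\circ_h)$ is an associative deformation of $(A,\circ)$, so Theorem~\ref{thacl} applies once we know that the operation \eqref{eq-dp} is the QCL bracket. The Poisson property then comes for free. Thus the only real task is identifying the first-order term of the commutator.

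For $x,y\in A$ we expand \eqref{UDF} up to first order in $h$:
\[
x\circ_h y\equiv x\circ y+h\,d_1(x)\circ d_2(y)\pmod{h^2},\qquad
y\circ_h x\equiv y\circ x+h\,d_1(y)\circ d_2(x)\pmod{h^2}.
\]
Since $(A,\circ)$ is commutative, the zeroth-order terms cancel. Dividing by $h$ and reducing mod $h$ gives
\[
\frac{x\circ_h y-y\circ_h x}{h}\equiv d_1(x)\circ d_2(y)-d_1(y)\circ d_2(x)\pmod h .
\]
Applying commutativity once more, $d_1(y)\circ d_2(x)=d_2(x)\circ d_1(y)$. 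Hence the QCL bracket is exactly $\{x,y\}$ as defined in \eqref{eq-dp}.

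Theorem~\ref{thacl} therefore gives that $(A,\{,\},\circ)$ is a Poisson algebra. There is no genuine obstacle here. The only point needing care is that the QCL is determined on $A$ itself, not on all of $A_h$, so restricting to $x,y\in A$ as in the definition suffices. If one wanted an independent check, the Jacobi identity and Leibniz rule \eqref{PostP1} could be verified directly using that $d_1,d_2$ are commuting derivations, but this is unnecessary given Theorem~\ref{thacl}.
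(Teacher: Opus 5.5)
Your proposal is correct and follows the same route as the paper, which obtains the result by combining Lemma~\ref{Dda} with Theorem~\ref{thacl}. Your first-order expansion of $x\circ_h y-y\circ_h x$, using commutativity of $\circ$, simply spells out the identification of the bracket with \eqref{eq-dp} that the paper leaves implicit.
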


Next, we give a similar construction for a coassociative deformation. Let $(A,\Delta)$ be a coassociative coalgebra. Then a linear map $\dc:A\to A$ satisfying \[\Delta\dc=(\dc\otimes\id_{A})\Delta+(\id_{A}\otimes\dc)\Delta\]
is called a \textbf{coderivation} on $(A,\Delta)$.
It is well-known that a derivation $d$ on an associative algebra $(A,\circ)$ satisfies
\begin{equation}
    d^m(x\circ y)=\sum_{j=0}^{m}\binom{m}{j}d^{j}(x)\circ d^{m-j}(y),\;\;\forall x,y\in A, m\in\mathbb{N}.\label{leb}
\end{equation}
Similarly, the following equation for a coderivation $\dc$ on a
coassociative coalgebra $(A,\Delta)$ can be obtained by
 induction.
\begin{equation}
\Delta\dc^{m}=\sum_{j=0}^{m}\binom{m}{j}(\dc^{j}\otimes\dc^{m-j})\Delta,\;\;\forall m\in\mathbb{N}.\label{coleb}
\end{equation}
\begin{pro}\label{DFC}
        Let $\dc_1$ and $\dc_2$ be commuting coderivations on a coassociative coalgebra $(A,\Delta)$. Set
    \begin{equation}\label{UDFC}
    \Delta_h(x):=\sum_{s=0}^{\infty}\dc_1^{s}(x_{(1)})\hat{\ot}\dc^{s}_{2}(x_{(2)})\frac{h^s}{s!},\;\;\forall x\in A,
    \end{equation}
    where we use the Sweedler notation $\Delta(x) = x_{(1)} \otimes x_{(2)}$,
    and extend it to $A_{h}$ by $\mathbf{K}$-linearity. Then $(A_h,\Delta_h)$ is a coassociative deformation of $(A,\Delta)$, called the \textbf{coassociative deformation induced by} $(A,\Delta,\dc_1,\dc_2)$.
\end{pro}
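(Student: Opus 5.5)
The plan is to prove the proposition by a direct computation dual to the proof of Lemma~\ref{Dda}, with the coderivation Leibniz rule (\ref{coleb}) playing the role of (\ref{leb}). The congruence (\ref{cmh}) is immediate, since the $s=0$ term of (\ref{UDFC}) is $x_{(1)}\ho x_{(2)}=\Delta(x)$. Hence $(A_h,\Delta_h)$ is a coassociative deformation of $(A,\Delta)$ as soon as $\Delta_h$ satisfies (\ref{eq:3}). Both sides of (\ref{eq:3}) are $\bfK$-linear and $h$-adically continuous, so it suffices to check the identity on $x\in A$, comparing coefficients of $h^n$.

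First compute $(\Delta_h\ho\id_{A_h})\Delta_h(x)$. For each $s$ we must evaluate $\Delta_h(\dc_1^s(x_{(1)}))$. By definition this is $\sum_t \dc_1^t(\cdot)\ho\dc_2^t(\cdot)\frac{h^t}{t!}$ applied to the Sweedler components of $\Delta(\dc_1^s(x_{(1)}))$. By (\ref{coleb}),
$$\Delta\dc_1^s=\sum_{j}\binom{s}{j}(\dc_1^j\ot\dc_1^{s-j})\Delta .$$
Substituting, the coefficient of $h^n$ becomes
$$\sum_{s+t=n}\sum_{j}\frac{1}{s!\,t!}\binom{s}{j}\,\dc_1^{t+j}(x_{(1)})\ot\dc_2^{t}\dc_1^{s-j}(x_{(2)})\ot\dc_2^{s}(x_{(3)}),$$
where we used coassociativity to write $x_{(1)}\ot x_{(2)}\ot x_{(3)}$. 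Commutation of $\dc_1$ and $\dc_2$ lets us reorder the middle factor.

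Symmetrically, for $(\id_{A_h}\ho\Delta_h)\Delta_h(x)$ we apply (\ref{coleb}) to $\dc_2$, and the coefficient of $h^n$ becomes
$$\sum_{s+t=n}\sum_{k}\frac{1}{s!\,t!}\binom{s}{k}\,\dc_1^{s}(x_{(1)})\ot\dc_1^{t}\dc_2^{k}(x_{(2)})\ot\dc_2^{t+s-k}(x_{(3)}).$$

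The main step, and the only real obstacle, is matching these two expressions. Collect terms by the exponent triple $(a,b,c;\,b')$, meaning $\dc_1^a\ot\dc_1^{b}\dc_2^{b'}\ot\dc_2^c$, subject to $a+b=n=b'+c$. On the left we have $a=t+j$, $b=s-j$, $b'=t$, $c=s$, with coefficient $\frac{1}{j!\,(s-j)!\,t!}=\frac{1}{(a-b')!\,b!\,b'!}$. On the right we have $a=s$, $b=t$, $b'=k$, $c=n-k$, with coefficient $\frac{1}{k!\,(s-k)!\,t!}=\frac{1}{b'!\,(a-b')!\,b!}$. The coefficients agree for each $(a,b,b')$, which proves (\ref{eq:3}).

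Some care is needed to justify the Sweedler manipulations inside the completed tensor product, in particular using coassociativity of $\Delta$ before applying the coderivations. This is routine, since everything is computed coefficientwise in $h$, where only finite sums in $A\ot A\ot A$ occur.
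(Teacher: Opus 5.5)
Your proof is correct and follows essentially the same route as the paper. Like the paper, you expand both sides of the coassociativity identity on $x\in A$ using the coderivation Leibniz rule (\ref{coleb}), for $\dc_1$ on one side and for $\dc_2$ on the other, and then match terms using the commutativity of $\dc_1,\dc_2$ and the coassociativity of $\Delta$. Your explicit reindexing by $(a,b,b')$, with common coefficient $\frac{1}{(a-b')!\,b!\,b'!}$ on both sides, simply spells out what the paper calls ``comparing the summation indices.'' You also state the congruence (\ref{cmh}), which the paper leaves implicit.
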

\begin{proof}
    Let $x\in A$. Then we have
    \begin{align}
            &\sum_{k=0}^{\infty}\sum_{s=0}^{\infty}((\dc_1^{k}\ot \dc^{k}_{2})\Delta\otimes\id_{A})(\dc_1^{s}\ot \dc^{s}_{2})\Delta(x)\frac{h^{k+s}}{k!s!}\nonumber\\
                &\overset{\hphantom{(\ref{coa})}}{=}\sum_{k=0}^{\infty}\sum_{s=0}^{\infty}((\dc_1^{k}\ot \dc^{k}_{2})\Delta\dc_1^{s}\ot \dc^{s}_{2})\Delta(x)\frac{h^{k+s}}{k!s!}\nonumber\\
            &\overset{(\ref{coleb})}{=}\sum_{k=0}^{\infty}\sum_{s=0}^{\infty}\sum_{j=0}^{s}((\dc_1^{k+j}\ot \dc^{k}_{2}\dc_1^{s-j})\Delta\ot \dc^{s}_{2})\Delta(x)\binom{s}{j}\frac{h^{k+s}}{k!s!}\nonumber\\
                &\overset{\hphantom{(\ref{coa})}}{=}\sum_{k=0}^{\infty}\sum_{s=0}^{\infty}\sum_{j=0}^{s}(\dc_1^{k+j}\ot \dc^{k}_{2}\dc_1^{s-j}\ot \dc^{s}_{2})(\Delta\otimes\id_{A})\Delta(x)\frac{h^{k+s}}{j!k!(s-j)!}.\label{css1}
    \end{align}
    Similarly, we obtain
        \begin{eqnarray}
    &&\sum_{k=0}^{\infty}\sum_{s=0}^{\infty}(\id_{A}\ot(\dc_1^{k}\ot \dc^{k}_{2})\Delta)(\dc_1^{s}\ot \dc^{s}_{2})\Delta(x)\frac{h^{k+s}}{k!s!}\nonumber\\
    &&=\sum_{k=0}^{\infty}\sum_{s=0}^{\infty}\sum_{j=0}^{s}(\dc_1^{s}\ot \dc_1^{k}\dc^{j}_{2}\ot \dc^{s-j}_{2})(\id_{A}\otimes\Delta)\Delta(x)\frac{h^{k+s}}{j!k!(s-j)!}.\label{cs2}
    \end{eqnarray}
    Comparing the summation indices of (\ref{css1}) and (\ref{cs2}), and then exploiting the commutativity of $\dc_{1},\dc_{2}$ and the coassociativity of $\Delta$, we see that
        \begin{equation}
        \begin{split}
            &\sum_{k=0}^{\infty}\sum_{s=0}^{\infty}((\dc_1^{k}\ot \dc^{k}_{2})\Delta\otimes\id_{A})(\dc_1^{s}\ot \dc^{s}_{2})\Delta(x)\frac{h^{k+s}}{k!s!}\\
        &=\sum_{k=0}^{\infty}\sum_{s=0}^{\infty}(\id_{A}\ot(\dc_1^{k}\ot \dc^{k}_{2})\Delta)(\dc_1^{s}\ot \dc^{s}_{2})\Delta(x)\frac{h^{k+s}}{k!s!},\;\;\forall x\in A.
        \end{split}\label{dcce}
    \end{equation}
    Embedding (\ref{dcce}) into $A_h\hat{\ot}A_h\hat{\ot}A_h$
    via the $\bfK$-linear isomorphism $(A\ot A\ot A)[[h]]\cong A_h\hat{\ot}A_h\hat{\ot}A_h$, we have (\ref{coa}). Thus we conclude that
$(A_h,\Delta_h)$ is a topologically free coassociative coalgebra.
\end{proof}
The following result follows from Proposition~\ref{DFC}~and Theorem~\ref{qclco}.
\begin{cor}
    Let $\dc_1$ and $\dc_2$ be commuting coderivations on a cocommutative coassociative coalgebra $(A,\Delta)$ and $(A_h,\Delta_h)$ be the coassociative deformation  induced by $(A,\Delta,\dc_1,\dc_2)$.
   Define a linear map $\delta:A\to A\ot A$ by
    \begin{equation}\label{UDFCA}
    \delta(x):=(\dc_1\ot\dc_{2}-\dc_2\ot\dc_{1})\Delta(x),\;\;\forall x\in A.
    \end{equation}
    Then $(A,\delta,\Delta)$ coincides with the QCL of $(A_h,\Delta_h)$.
    Moreover, $(A,\delta,\Delta)$ is a Poisson coalgebra.
\end{cor}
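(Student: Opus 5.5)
The plan is to compute the first-order term of $\Delta_h-\sigma_{\bf K}\Delta_h$ directly from the defining formula (\ref{UDFC}), and then to invoke Theorem~\ref{qclco} for the Poisson coalgebra property. By Proposition~\ref{DFC}, $(A_h,\Delta_h)$ is a coassociative deformation of $(A,\Delta)$. Since $(A,\Delta)$ is cocommutative, the QCL $(A,\delta',\Delta)$ is defined, and by Theorem~\ref{qclco} it is a Poisson coalgebra. So it suffices to show that $\delta'=\delta$, where $\delta$ is given by (\ref{UDFCA}).

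For $x\in A$, expanding (\ref{UDFC}) up to first order gives
\[
\Delta_h(x)\equiv x_{(1)}\ot x_{(2)}+h\,\dc_1(x_{(1)})\ot\dc_2(x_{(2)})\pmod{h^2}.
\]
Applying $\sigma_{\bf K}$ gives
\[
\sigma_{\bf K}\Delta_h(x)\equiv x_{(2)}\ot x_{(1)}+h\,\dc_2(x_{(2)})\ot\dc_1(x_{(1)})\pmod{h^2}.
\]
The zeroth-order terms cancel because $\sigma\Delta=\Delta$. Dividing by $h$ and reducing mod $h$ therefore yields
\[
\delta'(x)=(\dc_1\ot\dc_2)\Delta(x)-\sigma(\dc_1\ot\dc_2)\Delta(x).
\]

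Next I rewrite the subtracted term using $\sigma(\dc_1\ot\dc_2)=(\dc_2\ot\dc_1)\sigma$ together with cocommutativity $\sigma\Delta(x)=\Delta(x)$:
\[
\sigma(\dc_1\ot\dc_2)\Delta(x)=(\dc_2\ot\dc_1)\sigma\Delta(x)=(\dc_2\ot\dc_1)\Delta(x).
\]
Hence $\delta'=\delta$, so $(A,\delta,\Delta)$ coincides with the QCL, and the Poisson coalgebra statement then follows from Theorem~\ref{qclco}.

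The main point needing care is that cocommutativity is used twice: once to kill the $h^0$ term, and once to convert $\sigma(\dc_1\ot\dc_2)\Delta$ into $(\dc_2\ot\dc_1)\Delta$. Beyond this the argument is routine, and the commutation of $\dc_1$ and $\dc_2$ is needed only through Proposition~\ref{DFC}.
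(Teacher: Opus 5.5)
Your proposal is correct and follows the paper's route: the paper only says the corollary follows from Proposition~\ref{DFC} and Theorem~\ref{qclco}. Your first-order computation of $\Delta_h-\sigma_{\bf K}\Delta_h$ supplies the check the paper leaves implicit, namely using cocommutativity to cancel the $h^0$ term and to rewrite $\sigma(\dc_1\ot\dc_2)\Delta$ as $(\dc_2\ot\dc_1)\Delta$.
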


\begin{defi}{\rm{(\cite{Lin})}}
    Let $(A, \circ, \Delta)$ be an ASI bialgebra,
    $d$ be a derivation on $(A, \circ)$, and $\dc$ be a coderivation on $(A, \Delta)$. Then the pair $(d,\dc)$ is called a \textbf{coherent derivation}
    on the ASI bialgebra
    $(A, \circ, \Delta)$ if for all $x,y\in A$,
    \begin{eqnarray*}
    &&\dc(x)\circ y=x\circ d(y)+\dc(x\circ y),\;\;x\circ\dc(y)=d(x)\circ y+\dc(x\circ y),\\
    &&(d\otimes\id_{A})\Delta=(\id_{A}\otimes\dc)\Delta+\Delta d,\;\;(\id_{A}\otimes\dc)\Delta=(\dc\otimes\id_{A})\Delta+\Delta d.
    \end{eqnarray*}
\end{defi}
The following result can be proved by induction on $m$.
\begin{pro}
    Let $(d,\dc)$ be a coherent derivation on an ASI bialgebra $(A, \circ, \Delta)$. Then for all $m\in\mathbb{N}, x,y\in A$, the following
    equations hold.
        \begin{eqnarray}
        \dc^{m}(x)\circ y&=&\sum_{j=0}^{m}\binom{m}{j}\dc^{j}(x\circ d^{m-j}(y)),\label{px1}\\
        x\circ\dc^{m}(y)&=&\sum_{j=0}^{m}\binom{m}{j}\dc^{j}(d^{m-j}(x)\circ y),\label{px2}\\
        (d^{m}\otimes\id_{A})\Delta&=&\sum_{j=0}^{m}\binom{m}{j}(\id_{A}\otimes\dc^{j})\Delta d^{m-j},\label{px3}\\
        (\id_{A}\otimes\dc^{m})\Delta&=&\sum_{j=0}^{m}\binom{m}{j}(\dc^{j}\otimes\id_{A})\Delta d^{m-j}\label{px4}.
    \end{eqnarray}
\end{pro}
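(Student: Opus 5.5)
We argue by induction on $m$, treating the four identities separately; each is a binomial-type expansion generated by one of the four coherence relations in the definition of a coherent derivation. For $m=0$ all four are trivial. For $m=1$, \eqref{px1} reads $\dc(x)\circ y=x\circ d(y)+\dc(x\circ y)$, which is exactly the first coherence condition. Likewise \eqref{px2}, \eqref{px3} and \eqref{px4} at $m=1$ are the second, third and fourth conditions, respectively.

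For the inductive step in \eqref{px1}, we write $\dc^{m+1}(x)\circ y=\dc(\dc^m(x))\circ y$. Applying the $m=1$ relation with $x$ replaced by $\dc^m(x)$ gives
\[
\dc^{m+1}(x)\circ y=\dc^m(x)\circ d(y)+\dc(\dc^m(x)\circ y).
\]
To both terms we then apply the induction hypothesis: with $y$ replaced by $d(y)$ in the first term, and with $y$ unchanged inside $\dc$ in the second. This yields
\[
\sum_{j}\binom{m}{j}\dc^{j}(x\circ d^{m+1-j}(y))+\sum_{j}\binom{m}{j}\dc^{j+1}(x\circ d^{m-j}(y)).
\]
After shifting the index in the second sum, Pascal's rule $\binom{m}{j}+\binom{m}{j-1}=\binom{m+1}{j}$ gives the formula at level $m+1$. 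The only things used here are the linearity of $\dc$ and the fact that $d$ commutes with itself; no compatibility between $d$ and $\dc$ beyond the coherence relations is needed. The proof of \eqref{px2} is identical, peeling one $\dc$ off $y$ and using the second coherence condition.

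For \eqref{px3}, we write $(d^{m+1}\ot\id_A)\Delta=(d\ot\id_A)(d^m\ot\id_A)\Delta$ and expand by the induction hypothesis. Since $d\ot\id_A$ commutes with $\id_A\ot\dc^j$, this becomes
\[
\sum_j\binom{m}{j}(\id_A\ot\dc^j)(d\ot\id_A)\Delta d^{m-j}.
\]
Applying the third coherence condition to the element $d^{m-j}(x)$ turns $(d\ot\id_A)\Delta d^{m-j}$ into $(\id_A\ot\dc)\Delta d^{m-j}+\Delta d^{m-j+1}$. Pascal's rule again closes the induction. For \eqref{px4}, we peel one factor $\id_A\ot\dc$ off, use that it commutes with $\dc^j\ot\id_A$, and apply the fourth coherence condition.

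There is no real obstacle here. The one point requiring care is ordering the operators correctly so that the tensor factors commute, which they do because they act on different factors, and keeping track of the index shift in Pascal's rule. In particular, we never need $d$ and $\dc$ to commute with each other.
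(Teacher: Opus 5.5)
Your proof is correct and takes the same route as the paper, which only says the result follows by induction on $m$. Your inductive steps supply the omitted details correctly: each coherence relation serves as the $m=1$ case and, applied once more to $\dc^m(x)$, $\dc^m(y)$ or $d^{m-j}(x)$ and combined with Pascal's rule, gives the identity at level $m+1$.
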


It is straightforward to obtain the following result.
\begin{lem}
    Let $A$ be a vector space, $\circ$ be a binary operation on $A$, and $\Delta:A\to A\ot A$, $d_1,d_2,\dc_1,\dc_2:A\to A$ be linear maps. Assume that $d_1,d_2$ commute and $\dc_1,\dc_2$ also commute.
    \begin{enumerate}
        \item If the following equation holds,
        \begin{equation}
            d_{1}(x)\circ d_{2}(y)=\dc_{2}(x\circ d_1(y)),\;\;\forall x,y\in A, \label{ed1}
        \end{equation}
        then we have
        \begin{equation}
            d_{1}^{m}(x)\circ d_{2}^{m}(y)=\dc_{2}^{m}(x\circ
            d_1^{m}(y)),\;\;\forall m\in\mathbb{N},\;x,y\in A.
            \label{ped1}
        \end{equation}
            \item If the following equation holds,
        \begin{equation}
    d_{1}(x)\circ d_{2}(y)=\dc_1(d_2(x)\circ y),\;\;\forall x,y\in A,\label{ed2}
        \end{equation}
    then we have
        \begin{equation}
    d_{1}^{m}(x)\circ d_{2}^{m}(y)=\dc_{1}^{m}(d_{2}^{m}(x)\circ y),\;\;\forall m\in\mathbb{N},\;x,y\in A.\label{ped2}
        \end{equation}
            \item If the following equation holds,
        \begin{equation}
    (\dc_1\otimes\dc_2)\Delta=(\id_{A}\otimes\dc_1)\Delta d_2,\label{ed3}
        \end{equation}
        then we have
        \begin{equation}
    (\dc_1^{m}\otimes\dc_2^{m})\Delta=(\id_{A}\otimes\dc_1^{m})\Delta d_2^{m},\;\;\forall m\in\mathbb{N}.\label{ped3}
        \end{equation}
        \item If the following equation holds,
        \begin{equation}
    (\dc_1\otimes\dc_2)\Delta=(\dc_2\otimes\id_{A})\Delta d_1,\label{ed4}
        \end{equation}
        then we have
        \begin{equation}
    (\dc_1^{m}\otimes\dc_2^{m})\Delta=(\dc_2^{m}\otimes\id_{A})\Delta d_1^{m},\;\;\forall m\in\mathbb{N}.\label{ped4}
        \end{equation}
    \end{enumerate}
\end{lem}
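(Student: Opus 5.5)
Each of the four items will be proved by induction on $m$. The case $m=0$ is trivial, since all the operators become $\id_A$, and the case $m=1$ is the hypothesis itself. For the inductive step the plan is to peel off one copy of each operator and use the commutativity assumptions, $d_1d_2=d_2d_1$ and $\dc_1\dc_2=\dc_2\dc_1$, to move operators past one another. Note that no associativity or derivation property of $\circ$ is needed; only bilinearity of $\circ$ and linearity of the maps are used.

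\emph{Item (1).} Assume (\ref{ped1}) holds for $m$. Write $d_1^{m+1}(x)\circ d_2^{m+1}(y)=d_1(d_1^m(x))\circ d_2(d_2^m(y))$ and apply (\ref{ed1}) with $x$ replaced by $d_1^m(x)$ and $y$ by $d_2^m(y)$. This gives $\dc_2\big(d_1^m(x)\circ d_1 d_2^m(y)\big)$. Since $d_1d_2^m=d_2^md_1$, the inner term is $d_1^m(x)\circ d_2^m(d_1(y))$. By the induction hypothesis applied to the pair $(x,d_1(y))$, this equals $\dc_2^m\big(x\circ d_1^{m}(d_1(y))\big)$. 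Hence the whole expression is $\dc_2^{m+1}(x\circ d_1^{m+1}(y))$.

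\emph{Item (2)} is the mirror argument. Apply (\ref{ed2}) to $(d_1^m(x),d_2^m(y))$ to get $\dc_1\big(d_2d_1^m(x)\circ d_2^m(y)\big)$. Rewrite $d_2d_1^m(x)=d_1^m(d_2(x))$ and use the hypothesis on the pair $(d_2(x),y)$.

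\emph{Items (3) and (4)} are the dual computations at the level of maps. For (3), write
$$(\dc_1^{m+1}\otimes\dc_2^{m+1})\Delta=(\dc_1^m\otimes\dc_2^m)(\dc_1\otimes\dc_2)\Delta=(\dc_1^m\otimes\dc_2^m)(\id_A\otimes\dc_1)\Delta d_2.$$
Then use $\dc_2^m\dc_1=\dc_1\dc_2^m$ to rewrite this as $(\id_A\otimes\dc_1)(\dc_1^m\otimes\dc_2^m)\Delta d_2$. Applying the induction hypothesis gives $(\id_A\otimes\dc_1)(\id_A\otimes\dc_1^m)\Delta d_2^m d_2=(\id_A\otimes\dc_1^{m+1})\Delta d_2^{m+1}$. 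Item (4) is symmetric: use $\dc_1^m\dc_2=\dc_2\dc_1^m$ in the first tensor factor and then $d_1^md_1=d_1^{m+1}$.

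The only point requiring care is the order of the operators. For example, in (1) one must apply the hypothesis to the shifted arguments $d_1^m(x)$ and $d_2^m(y)$ rather than to $x$ and $y$, so that the commutation $d_1d_2^m=d_2^md_1$ can be used. I do not expect any genuine obstacle; the whole proof is bookkeeping of this kind.
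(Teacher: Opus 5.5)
Your proof is correct and is the routine induction on $m$ that the paper has in mind; the paper only says the lemma is ``straightforward to obtain'' and writes out no proof. The key moves are the right ones: in (1) and (2), apply the hypothesis to the shifted arguments and commute $d_1$ past $d_2^m$; in (3) and (4), commute $\dc_2^m$ past $\dc_1$ (respectively $\dc_1^m$ past $\dc_2$) before invoking the inductive hypothesis. As you implicitly show, (1) and (2) only need $d_1d_2=d_2d_1$, and (3) and (4) only need $\dc_1\dc_2=\dc_2\dc_1$.
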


\begin{thm}
    \label{ddasi}
Let $(d_1,\dc_1)$ and $(d_2,\dc_2)$ be coherent derivations on an
ASI bialgebra $(A, \circ, \Delta)$. Suppose that $d_1,d_2$ commute
and $\dc_1,\dc_2$ also commute. Let $(A_h,\circ_h)$ be the
associative deformation induced by $(A,\circ, d_1,d_2)$ and
$(A_h,\Delta_h)$ be the coassociative deformation induced by
$(A,\Delta,\dc_1,\dc_2)$. If {\rm(\ref{ed1})}, {\rm(\ref{ed2})},
{\rm(\ref{ed3})} and {\rm(\ref{ed4})} hold, then $(A_h, \circ_h,
\Delta_h)$ is an ASI bialgebra deformation of $(A, \circ,
\Delta)$, called the \textbf{ASI bialgebra deformation induced by}
$(A,\circ, d_1,d_2,\Delta,\dc_1,\dc_2)$.
\end{thm}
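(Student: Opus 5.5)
By Lemma~\ref{Dda}, $(A_h,\circ_h)$ is an associative deformation of $(A,\circ)$. By Proposition~\ref{DFC}, $(A_h,\Delta_h)$ is a coassociative deformation of $(A,\Delta)$. So it only remains to verify the two compatibility conditions (\ref{tib1}) and (\ref{tasib1}). Both sides of each are $\bfK$-(bi)linear and continuous in the $h$-adic topology, so it suffices to check them for $x,y\in A$. We then compare the coefficients of $h^n$ in $(A\ot A)[[h]]\cong A_h\ho A_h$.

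\emph{Writing out (\ref{tib1}).} The left-hand side expands as
\[
\Delta_h(x\circ_h y)=\sum_{s,k}(\dc_1^k\ot\dc_2^k)\Delta\bigl(d_1^s(x)\circ d_2^s(y)\bigr)\frac{h^{s+k}}{s!k!}.
\]
The two terms on the right expand as double sums of the form
\[
\sum_{s,k}\dc_1^k(y_{(1)})\ot d_1^s(x)\circ d_2^s\dc_2^k(y_{(2)})\,\frac{h^{s+k}}{s!k!}
\]
and its mirror image built from $\Delta(x)$ and $y$. In each term the key move is to trade the powers of the derivations $d_i$ for powers of the coderivations $\dc_i$, or conversely.

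\emph{Converting derivation powers.}
\begin{itemize}
\item On the algebra side, (\ref{ped1}) gives $d_1^s(x)\circ d_2^s(z)=\dc_2^s(x\circ d_1^s(z))$, and (\ref{ped2}) gives the analogue with $\dc_1^s$.
\item On the coalgebra side, (\ref{ped3}) and (\ref{ped4}) rewrite $(\dc_1^k\ot\dc_2^k)\Delta$ as $(\id_A\ot\dc_1^k)\Delta d_2^k$ or as $(\dc_2^k\ot\id_A)\Delta d_1^k$.
\end{itemize}

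\emph{Reducing (\ref{tib1}).} On the left I use (\ref{ped3}) and then (\ref{ped1}) to move everything into the shape $(\id_A\ot\dc^{\bullet})\Delta(\cdots)$. I then apply the undeformed relation (\ref{asi1}) to $\Delta(x\circ d_1^s d_2^k(y))$, or to the corresponding product. Next I push the remaining coderivation powers back through $\circ$ using (\ref{px1}), (\ref{px2}) and through $\Delta$ using (\ref{px3}), (\ref{px4}). After regrouping the binomial sums with $\frac{1}{s!k!}\binom{s}{j}$, I expect each $h^n$-coefficient on both sides to become the same explicit finite sum. This mirrors the index-matching argument in the proof of Proposition~\ref{DFC}.

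\emph{Reducing (\ref{tasib1}).} I expand
\[
(L_{\circ_h}(y)\ho\id-\id\ho R_{\circ_h}(y))\Delta_h(x)
\]
as a double series and apply the same conversions, so that each $h^n$-coefficient is expressed through
\[
(L_\circ(\cdot)\ot\id-\id\ot R_\circ(\cdot))\Delta(\cdot)
\]
with coderivation powers applied outside. Applying $\sigma$ and adding the $x\leftrightarrow y$ term, the undeformed identity (\ref{asi2}) should then kill each coefficient. The commutation of $d_1,d_2$ and of $\dc_1,\dc_2$ lets me reorder the powers freely.

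\emph{Main obstacle.} I expect the hardest step to be the bookkeeping in (\ref{tib1}). The two summands on the right carry coderivations in different tensor slots, and they must be brought to a common normal form before (\ref{asi1}) applies. My first attempt will be to normalize every term to the form
\[
(\dc_1^{a}\ot\dc_2^{b})\Delta\bigl(d_1^{c}(x)\circ d_2^{e}(y)\bigr),
\]
using (\ref{ped1})--(\ref{ped4}) together with (\ref{px1})--(\ref{px4}), and then compare multinomial coefficients. If that becomes unwieldy, I will instead verify the identity at $h$-order $n$ by induction on $n$, using the single-step relations (\ref{ed1})--(\ref{ed4}) and the coherence axioms directly.
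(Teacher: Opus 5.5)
Your overall strategy is the paper's: reduce to $x,y\in A$, expand in $h$, trade powers via \eqref{ped1}--\eqref{ped4} and \eqref{px1}--\eqref{px4}, and finish with the undeformed \eqref{asi1} and \eqref{asi2}. However, the reduction you actually describe does not work. The coefficient matching, which is the entire content of the theorem once Lemma~\ref{Dda} and Proposition~\ref{DFC} are invoked, is only anticipated. On the left of \eqref{tib1}, after \eqref{ped3} you have $(\id_A\ot\dc_1^{k})\Delta\,d_2^{k}\bigl(d_1^{s}(x)\circ d_2^{s}(y)\bigr)$. Rewriting the product by \eqref{ped1} gives $\Delta\,d_2^{k}\dc_2^{s}\bigl(x\circ d_1^{s}(y)\bigr)$. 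Now $d_2^{k}$ is trapped outside $\dc_2^{s}$, since nothing makes $d_2$ and $\dc_2$ commute, and in any case it would distribute over both factors. So no term $\Delta(x\circ d_1^{s}d_2^{k}(y))$ arises. The missing tool is the Leibniz rule \eqref{leb} for $d_2^{k}$ on the product, applied before \eqref{asi1}. Your proposed normal form $(\dc_1^{a}\ot\dc_2^{b})\Delta(d_1^{c}(x)\circ d_2^{e}(y))$ is just the shape of the unreduced left side, and none of your listed rules drives the right-hand terms into it. The form that works puts both derivations on $x$ and only $\dc_1$ outside. By \eqref{ped3} and \eqref{leb}, the $h^n$-coefficient of the left side is
\[
\sum_{p+q+j=n}\frac{1}{p!\,q!\,j!}\,(\id_A\ot\dc_1^{q+j})\,\Delta(x'\circ y'),\qquad x'=d_1^{p}d_2^{q}(x),\quad y'=d_2^{p+j}(y).
\]
On the right, \eqref{ped3}, \eqref{ped2} and \eqref{px2} turn $(\id\ho L_{\circ_h}(x))\Delta_h(y)$ into the same sum with $\Delta(x'\circ y')$ replaced by $(\id_A\ot L_\circ(x'))\Delta(y')$. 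Likewise \eqref{ped3} and \eqref{px3} turn $(R_{\circ_h}(y)\ho\id)\Delta_h(x)$ into the same sum with $(R_\circ(y')\ot\id_A)\Delta(x')$. Then \eqref{asi1} gives \eqref{tib1}; only \eqref{ped2} and \eqref{ped3} are needed here.

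For \eqref{tasib1}, ``apply the same conversions and add the $x\leftrightarrow y$ term'' fails for a related reason. Normalizing both halves alike puts the derivations on $x$ in one half and on $y$ in the other, so no single instance of \eqref{asi2} applies. The halves must be normalized asymmetrically, keeping $y$ bare throughout.
\begin{enumerate}
\item In $(L_{\circ_h}(y)\ho\id-\id\ho R_{\circ_h}(y))\Delta_h(x)$, push all derivations onto $x$ inside $\Delta$. For the $L$-part use \eqref{ped3}, \eqref{ped1}, \eqref{px3}. For the $R$-part use \eqref{ped4}, \eqref{ped2}, \eqref{px4}, the last in the form $(\id_A\ot d_2^{m})\Delta=\sum_{j}\binom{m}{j}(\dc_2^{j}\ot\id_A)\Delta d_2^{m-j}$, which is how the paper's computation uses it.
\item In the $\sigma$-half built on $\Delta_h(y)$, push the derivations onto $x$ in the $L_\circ$/$R_\circ$ slot, using \eqref{ped2}, \eqref{px2} and \eqref{ped1}, \eqref{px1}.
\end{enumerate}
The $h^n$-coefficient then becomes
\[
\sum_{p+q+j=n}\frac{1}{p!\,q!\,j!}\,(\dc_2^{p+j}\ot\dc_1^{q+j})\Bigl[(L_\circ(y)\ot\id_A-\id_A\ot R_\circ(y))\Delta(x')+\sigma\bigl((L_\circ(x')\ot\id_A-\id_A\ot R_\circ(x'))\Delta(y)\bigr)\Bigr]
\]
with $x'=d_1^{p}d_2^{q}(x)$, which vanishes by \eqref{asi2}. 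This is what the paper's four-term computation does. Your induction-on-$n$ fallback is too undeveloped to replace it.
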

\begin{proof}
       Let $x,y\in A$. Then we have
    \begin{align}
      &\sum_{s=0}^{\infty}\sum_{m=0}^{s}\frac{\left(\dc_{1}^{m}\otimes\dc_{2}^{m}\right)\Delta\left(d_1^{s-m}\left(x\right)\circ d_2^{s-m}\left(y\right)\right)}{m!\left(s-m\right)!}h^s\nonumber\\
      &\overset{\left(\ref{ped3}\right)}{=}  \sum_{s=0}^{\infty}\sum_{m=0}^{s}\frac{\left(\id_A\otimes\dc_{1}^{m}\right)\Delta d_2^{m}\left(d_1^{s-m}\left(x\right)\circ d_2^{s-m}\left(y\right)\right)}{m!\left(s-m\right)!}h^s\nonumber\\
      &\overset{\left(\ref{leb}\right)}{=}\sum_{s=0}^{\infty}\sum_{m=0}^{s}\sum_{j=0}^{m}\frac{\binom{m}{j}\left(\id_A\otimes\dc_{1}^{m}\right)\Delta \left(d_2^{j}d_1^{s-m}\left(x\right)\circ d_2^{s-j}\left(y\right)\right)}{m!\left(s-m\right)!}h^s\nonumber\\
      &\overset{\left(\ref{asi1}\right)}{=}
      \sum_{s=0}^{\infty}\sum_{m=0}^{s}\sum_{j=0}^{m}
      \frac{\left(R_{\circ}\left(d_2^{s-j}\left(y\right)\right)\otimes\dc_{1}^{m}\right)\Delta \left(d_2^{j}d_1^{s-m}\left(x\right)\right)}{j!\left(m-j\right)!\left(s-m\right)!}h^s\label{guoduib1}\\
      &\qquad+        \sum_{s=0}^{\infty}\sum_{m=0}^{s}\sum_{j=0}^{m}\frac{\left(\id_{A}\ot\dc_1^{m}L_{\circ}\left(d_2^{j}d_1^{s-m}\left(x\right)\right)\right)\Delta \left(d_2^{s-j}\left(y\right)\right)}{j!\left(m-j\right)!\left(s-m\right)!}h^s\label{guoduib2},\\
      &\sum_{s=0}^{\infty}\sum_{m=0}^{s}\frac{\left(R_{\circ}\left(d_2^{s-m}\left(y\right)\right)d_1^{s-m}\otimes\id_A\right)\left(\dc_{1}^{m}\otimes\dc_{2}^{m}\right)\Delta\left(x\right)}{m!\left(s-m\right)!}h^s\nonumber\\
      &\overset{\left(\ref{ped3}\right)}{=}  \sum_{s=0}^{\infty}\sum_{m=0}^{s}\frac{\left(R_{\circ}\left(d_2^{s-m}\left(y\right)\right)d_1^{s-m}\otimes\dc_{1}^{m}\right)\Delta d_2^{m}\left(x\right)}{m!\left(s-m\right)!}h^s\nonumber\\
      &\overset{\hphantom{\left(\ref{px3}\right)}}{=}\sum_{s=0}^{\infty}\sum_{m=0}^{s}\frac{\left(R_{\circ}\left(d_2^{s-m}\left(y\right)\right)\otimes\dc_{1}^{m}\right)\left(d_1^{s-m}\otimes\id_{A}\right)\Delta d_2^{m}\left(x\right)}{m!\left(s-m\right)!}h^s\nonumber\\
      &\overset{\left(\ref{px3}\right)}{=}
      \sum_{s=0}^{\infty}\sum_{m=0}^{s}\sum_{j=0}^{m}
      \frac{\binom{s-m}{j}\left(R_{\circ}\left(d_2^{s-j}\left(y\right)\right)\otimes\dc_{1}^{m+j}\right)\Delta \left(d_1^{s-m-j}d_2^{m}\left(x\right)\right)}{\left(m-j\right)!\left(s-m\right)!}h^s\nonumber\\
      &\overset{\hphantom{\left(\ref{px3}\right)}}{=}    \sum_{s=0}^{\infty}\sum_{m=0}^{s}\sum_{j=0}^{m}
      \frac{\left(R_{\circ}\left(d_2^{s-j}\left(y\right)\right)\otimes\dc_{1}^{m+j}\right)\Delta \left(d_1^{s-m-j}d_2^{m}\left(x\right)\right)}{j!\left(s-m-j\right)!\left(m-j\right)!}h^s.\label{guoduib3}
    \end{align}
    On the other hand, we have
        \begin{align}
       &\sum_{s=0}^{\infty}\sum_{m=0}^{s}\frac{\left(\id_{A}\ot L_{\circ}\left(d_1^{s-m}\left(x\right)\right) d_2^{s-m}\right)\left(\dc_{1}^{m}\otimes\dc_{2}^{m}\right)\Delta\left(y\right)}{m!\left(s-m\right)!}h^s\nonumber\\
       &\overset{\left(\ref{ped3}\right)}{=}  \sum_{s=0}^{\infty}\sum_{m=0}^{s}\frac{\left(\id_{A}\ot L_{\circ}\left(d_1^{s-m}\left(x\right)\right)d_2^{s-m}\dc_{1}^{m}\right)\Delta d_2^{m}\left(y\right)}{m!\left(s-m\right)!}h^s\nonumber\\
       &\overset{\left(\ref{ped2}\right)}{=}\sum_{s=0}^{\infty}\sum_{m=0}^{s}\frac{\left(\id_{A}\ot \dc_{1}^{s-m}L_{\circ}\left(d_2^{s-m}\left(x\right)\right)\dc_{1}^{m}\right)\Delta d_2^{m}\left(y\right)}{m!\left(s-m\right)!}h^s\label{guoduib4}\\
       &\overset{\left(\ref{px2}\right)}{=}\sum_{s=0}^{\infty}\sum_{m=0}^{s}\sum_{j=0}^{m}
       \frac{\left(\id_{A}\ot \dc_{1}^{s-m+j}L_{\circ}\left(d_1^{m-j}d_2^{s-m}\left(x\right)\right)\right)\Delta d_2^{m}\left(y\right)}{j!\left(m-j\right)!\left(s-m\right)!}h^s\nonumber\\
       &\qquad+    \sum_{s=0}^{\infty}\sum_{m=0}^{s}\sum_{j=0}^{m}
       \frac{\left(\id_{A}\ot \dc_{1}^{s-m+j}L_{\circ}\left(d_1^{m-j}d_2^{s-m}\left(x\right)\right)\right)\Delta d_2^{m}\left(y\right)}{j!\left(m-j\right)!\left(s-m\right)!}h^s.\nonumber
    \end{align}
    Comparing (\ref{guoduib1}) with (\ref{guoduib3}) and (\ref{guoduib2}) with (\ref{guoduib4}) respectively, by the commutativity of $d_1,d_2$, we obtain 
    \begin{equation*}
   \begin{split}
   \sum_{s=0}^{\infty}\sum_{m=0}^{s}\frac{\left(\dc_{1}^{m}\otimes\dc_{2}^{m}\right)\Delta\left(d_1^{s-m}\left(x\right)\circ d_2^{s-m}\left(y\right)\right)}{m!\left(s-m\right)!}h^s=&\sum_{s=0}^{\infty}\sum_{m=0}^{s}\frac{\left(R_{\circ}\left(d_2^{s-m}\left(y\right)\right)d_1^{s-m}\otimes\id_A\right)\left(\dc_{1}^{m}\otimes\dc_{2}^{m}\right)\Delta\left(x\right)}{m!\left(s-m\right)!}h^s\\
   &+\sum_{s=0}^{\infty}\sum_{m=0}^{s}\frac{\left(\id_{A}\ot L_{\circ}\left(d_1^{s-m}\left(x\right) d_2^{s-m}\right)\right)\left(\dc_{1}^{m}\otimes\dc_{2}^{m}\right)\Delta\left(y\right)}{m!\left(s-m\right)!}h^s.
   \end{split}
    \end{equation*}
    Embedding the above equation into $A_h\hat{\ot}A_h$
    by the $\bfK$-linear isomorphism $(A\ot A)[[h]]\cong A_h\hat{\ot}A_h$
    yields (\ref{tib}).

   Moreover, we have
\begin{align}
&\sum_{s=0}^{\infty}\sum_{m=0}^{s}\frac{\left(L_{\circ}\left(d_1^{m}\left(x\right)\right)d_2^{m}\otimes\id_A\right)\left(\dc_{1}^{s-m}\otimes\dc_{2}^{s-m}\right)\Delta\left(y\right)}{m!\left(s-m\right)!}h^s\nonumber\\
&\overset{\left(\ref{ped2}\right)}{=}  \sum_{s=0}^{\infty}\sum_{m=0}^{s}\frac{\left(\dc_{1}^{m}L_{\circ}\left(d_2^{m}\left(x\right)\right)\dc_{1}^{s-m}\otimes\dc_{2}^{s-m}\right)\Delta\left(y\right)}{m!\left(s-m\right)!}h^s\nonumber\\
&\overset{\left(\ref{px2}\right)}{=}   \sum_{s=0}^{\infty}\sum_{m=0}^{s}\sum_{j=0}^{s-m}\frac{\binom{s-m}{j}\left(\dc_{1}^{m+j}L_{\circ}\left(d_1^{s-m-j}d_2^{m}\left(x\right)\right)\otimes\dc_{2}^{s-m}\right)\Delta\left(y\right)}{m!\left(s-m\right)!}h^s\nonumber\\
&\overset{\hphantom{\left(\ref{px3}\right)}}{=}\sum_{s=0}^{\infty}\sum_{m=0}^{s}\sum_{j=0}^{s-m}\frac{\left(\dc_{1}^{m+j}L_{\circ}\left(d_1^{s-m-j}d_2^{m}\left(x\right)\right)\otimes\dc_{2}^{s-m}\right)\Delta\left(y\right)}{m!j!\left(s-m-j\right)!}h^s,
\label{guoduaib1}
\end{align}
    \begin{align}
&\sum_{s=0}^{\infty}\sum_{m=0}^{s}\frac{\left(\id_A\otimes R_{\circ}\left(d_2^{m}\left(x\right)\right)d_1^{m}\right)\left(\dc_{1}^{s-m}\otimes\dc_{2}^{s-m}\right)\Delta\left(y\right)}{m!\left(s-m\right)!}h^s\nonumber\\
&\overset{\left(\ref{ped1}\right)}{=}  \sum_{s=0}^{\infty}\sum_{m=0}^{s}\frac{\left(\dc_1^{s-m}\otimes \dc_{2}^{m}R_{\circ}\left(d_1^{m}\left(y\right)\right)\dc_{2}^{s-m}\right)\Delta\left(y\right)}{m!\left(s-m\right)!}h^s\nonumber\\
&\overset{\left(\ref{px1}\right)}{=}   \sum_{s=0}^{\infty}\sum_{m=0}^{s}\sum_{j=0}^{s-m}\frac{\binom{s-m}{j}\left(\dc_1^{s-m}\otimes \dc_{2}^{m+j}R_{\circ}\left(d_2^{s-m-j}d_1^{m}\left(y\right)\right)\right)\Delta\left(y\right)}{m!\left(s-m\right)!}h^s\nonumber\\
&\overset{\hphantom{\left(\ref{px3}\right)}}{=}\sum_{s=0}^{\infty}\sum_{m=0}^{s}\sum_{j=0}^{s-m}\frac{\left(\dc_1^{s-m}\otimes
    \dc_{2}^{m+j}R_{\circ}\left(d_2^{s-m-j}d_1^{m}\left(y\right)\right)\right)\Delta\left(y\right)}{j!m!\left(s-m-j\right)!}h^s,
\label{guoduaib2}
\end{align}
\begin{align}
&\sum_{s=0}^{\infty}\sum_{m=0}^{s}\sigma\left(\frac{\left(\id_A\otimes R_{\circ}\left(d_2^{m}\left(y\right)\right)d_1^{m}\right)\left(\dc_{1}^{s-m}\otimes\dc_{2}^{s-m}\right)\Delta\left(x\right)}{m!\left(s-m\right)!}\right)h^s\nonumber\\
&\overset{\left(\ref{ped4}\right)}{=}  \sigma\left(\frac{\left(\dc_2^{s-m}\otimes R_{\circ}\left(d_2^{m}\left(y\right)\right)d_1^{m}\right)\Delta\left(d_1^{s-m}\left(x\right)\right)}{m!\left(s-m\right)!}\right)h^s\nonumber\\
&\overset{\left(\ref{ped2}\right)}{=}  \sigma\left(\frac{\left(\dc_2^{s-m}\otimes \dc_{1}^{m}R_{\circ}\left(y\right)d_2^{m}\right)\Delta\left(d_1^{s-m}\left(x\right)\right)}{m!\left(s-m\right)!}\right)h^s\nonumber\\
&\overset{\hphantom{\left(\ref{px3}\right)}}{=}
\sigma\left(\frac{\left(\dc_2^{s-m}\otimes
    \dc_{1}^{m}R_{\circ}\left(y\right)\right)\left(\id_{A}\ot
    d_2^{m}\right)\Delta\left(d_1^{s-m}\left(x\right)\right)}{m!\left(s-m\right)!}\right)h^s\nonumber\\
&\overset{\left(\ref{ped2}\right)}{=}\sigma\left(\frac{\left(\dc_2^{s-m}\otimes \dc_{1}^{m}R_{\circ}\left(y\right)d_2^{m}\right)\Delta\left(d_1^{s-m}\left(x\right)\right)}{m!\left(s-m\right)!}\right)h^s\nonumber\\
&\overset{\left(\ref{px4}\right)}{=}   \sigma\left(\frac{\left(\dc_2^{s-m+j}\otimes
    \dc_{1}^{m}R_{\circ}\left(y\right)\right)\Delta\left(d_2^{m-j}d_1^{s-m}\left(x\right)\right)}{\left(m-j\right)!j!\left(s-m\right)!}\right)h^s,
\label{guoduaib3}
\end{align}
\begin{align}
&\sum_{s=0}^{\infty}\sum_{m=0}^{s}\sigma\left(\frac{\left(L_{\circ}\left(d_1^{m}\left(y\right)\right)d_2^{m}\otimes\id_A\right)\left(\dc_{1}^{s-m}\otimes\dc_{2}^{s-m}\right)\Delta\left(x\right)}{m!\left(s-m\right)!}\right)h^s\nonumber\\
&\overset{\left(\ref{ped3}\right)}{=}  \sum_{s=0}^{\infty}\sum_{m=0}^{s}\sigma\left(\frac{\left(L_{\circ}\left(d_1^{m}\left(y\right)\right)d_2^{m}\otimes\dc_{1}^{s-m}\right)\Delta\left(d_2^{s-m}\left(x\right)\right)}{m!\left(s-m\right)!}\right)h^s\nonumber\\
&\overset{\left(\ref{ped1}\right)}{=}  \sum_{s=0}^{\infty}\sum_{m=0}^{s}\sigma\left(\frac{\left(\dc_{2}^{m}L_{\circ}\left(y\right)d_1^{m}\otimes\dc_{1}^{s-m}\right)\Delta\left(d_2^{s-m}\left(x\right)\right)}{m!\left(s-m\right)!}\right)h^s\nonumber\\
&\overset{\hphantom{\left(\ref{px3}\right)}}{=}\sum_{s=0}^{\infty}\sum_{m=0}^{s}\sigma\left(\frac{\left(\dc_{2}^{m}L_{\circ}\left(y\right)\otimes\dc_{1}^{s-m}\right)\left(d_1^{m}\otimes\id_{A}\right)\Delta\left(d_2^{s-m}\left(x\right)\right)}{m!\left(s-m\right)!}\right)h^s\nonumber\\
&\overset{\left(\ref{px3}\right)}{=}   \sum_{s=0}^{\infty}\sum_{m=0}^{s}\sum_{j=0}^{m}\sigma\left(\frac{\left(\dc_{2}^{m}L_{\circ}\left(y\right)\otimes\dc_{1}^{s-m+j}\right)\Delta\left(d_1^{m-j}d_2^{s-m}\left(x\right)\right)}{j!\left(m-j\right)!\left(s-m\right)!}\right)h^s.
\label{guoduaib4}
\end{align}
Adding (\ref{guoduaib1}), (\ref{guoduaib2}), (\ref{guoduaib3}), and (\ref{guoduaib4}) together, comparing the summation indices, using the commutativity of $d_1,d_2$ and (\ref{asi2}), we have
\begin{equation}
\begin{split}
&\sum_{s=0}^{\infty}\sum_{m=0}^{s}\frac{\left(L_{\circ}\left(d_1^{m}\left(x\right)\right)d_2^{m}\otimes\id_A\right)\left(\dc_{1}^{s-m}\otimes\dc_{2}^{s-m}\right)\Delta\left(y\right)}{m!\left(s-m\right)!}h^s\\
&+\sum_{s=0}^{\infty}\sum_{m=0}^{s}\frac{\left(\id_A\otimes R_{\circ}\left(d_2^{m}\left(x\right)\right)d_1^{m}\right)\left(\dc_{1}^{s-m}\otimes\dc_{2}^{s-m}\right)\Delta\left(y\right)}{m!\left(s-m\right)!}h^s\\
&+\sum_{s=0}^{\infty}\sum_{m=0}^{s}\sigma\left(\frac{\left(\id_A\otimes R_{\circ}\left(d_2^{m}\left(y\right)\right)d_1^{m}\right)\left(\dc_{1}^{s-m}\otimes\dc_{2}^{s-m}\right)\Delta\left(x\right)}{m!\left(s-m\right)!}\right)h^s\\
&+\sum_{s=0}^{\infty}\sum_{m=0}^{s}\sigma\left(\frac{\left(L_{\circ}\left(d_1^{m}\left(y\right)\right)d_2^{m}\otimes\id_A\right)\left(\dc_{1}^{s-m}\otimes\dc_{2}^{s-m}\right)\Delta\left(x\right)}{m!\left(s-m\right)!}\right)h^s=0.
\end{split}
    \label{sb2}
\end{equation}
    Embedding (\ref{sb2}) into $A_h\hat{\ot}A_h$
    by the $\bfK$-linear isomorphism $(A\ot A)[[h]]\cong A_h\hat{\ot}A_h$
    yields (\ref{tasib}).
\end{proof}
By Theorems~\ref{ddasi} and~\ref{thm-limasi}, we obtain the following result.
\begin{cor}\label{corpb}
    Let $(d_1,\dc_1)$ and $(d_2,\dc_2)$ be coherent derivations on a commutative and cocommutative ASI bialgebra $(A, \circ, \Delta)$. Suppose that $d_1,d_2$ commute, $\dc_1,\dc_2$ also commute and  {\rm(\ref{ed1})}, {\rm(\ref{ed2})}, {\rm(\ref{ed3})} and {\rm(\ref{ed4})} hold. Let $(A_h, \circ_h, \Delta_h)$ be the ASI bialgebra deformation induced by $(A,\circ, d_1,d_2,\Delta,\dc_1,\dc_2)$ and define $\{,\}$ and $\delta$ by {\rm(\ref{eq-dp})} and {\rm(\ref{UDFCA})}, respectively. Then $(A,\{,\},\circ,\delta,\Delta)$ coincides with the QCL of $(A_h, \circ_h, \Delta_h)$. Moreover, $(A,\{,\},\circ,\delta,\Delta)$ is a Poisson bialgebra, called the \textbf{Poisson bialgebra induced by} $(A,\circ, d_1,d_2,\Delta,\dc_1,\dc_2)$.
\end{cor}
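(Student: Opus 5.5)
The plan is to combine Theorem~\ref{ddasi} with Theorem~\ref{thm-limasi}. The only genuine work is to identify the quasiclassical limit explicitly.

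First, the hypotheses of the statement are exactly those of Theorem~\ref{ddasi}: $(d_1,\dc_1)$ and $(d_2,\dc_2)$ are coherent derivations, $d_1,d_2$ commute, $\dc_1,\dc_2$ commute, and (\ref{ed1})--(\ref{ed4}) hold. So $(A_h,\circ_h,\Delta_h)$ is an ASI bialgebra deformation of the commutative and cocommutative ASI bialgebra $(A,\circ,\Delta)$. By Theorem~\ref{thm-limasi}, its QCL $(A,\{,\}',\circ,\delta',\Delta)$ is therefore a Poisson bialgebra. Here $\{,\}'$ and $\delta'$ denote the QCL bracket and cobracket, which are uniquely determined modulo $h$ by the definitions.

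Next I will show that $\{,\}'=\{,\}$ and $\delta'=\delta$. By Corollary~\ref{daop}, the QCL of the associative deformation induced by $(A,\circ,d_1,d_2)$ has bracket (\ref{eq-dp}). To see this directly, expand (\ref{UDF}) to first order:
\[x\circ_h y\equiv x\circ y+h\,d_1(x)\circ d_2(y)\pmod{h^2}.\]
Commutativity of $\circ$ then gives
\[\frac{x\circ_h y-y\circ_h x}{h}\equiv d_1(x)\circ d_2(y)-d_1(y)\circ d_2(x)\equiv d_1(x)\circ d_2(y)-d_2(x)\circ d_1(y)\pmod h.\]
Dually, the corollary following Proposition~\ref{DFC} gives $\delta'=\delta$ as in (\ref{UDFCA}). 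Explicitly,
\[\Delta_h(x)\equiv\Delta(x)+h(\dc_1\ot\dc_2)\Delta(x)\pmod{h^2},\]
and applying $\sigma_{\bf K}$ together with cocommutativity $\sigma\Delta=\Delta$ yields
\[\sigma(\dc_1\ot\dc_2)\Delta(x)=(\dc_2\ot\dc_1)\sigma\Delta(x)=(\dc_2\ot\dc_1)\Delta(x).\]
Hence $\delta'(x)=(\dc_1\ot\dc_2-\dc_2\ot\dc_1)\Delta(x)$, which is $\delta(x)$.

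So the quintuple $(A,\{,\},\circ,\delta,\Delta)$ coincides with the QCL, and it is a Poisson bialgebra by Theorem~\ref{thm-limasi}.

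There is no real obstacle here. The only care needed is the first-order bookkeeping in $h$ and the use of the commutativity of $\circ$ and the cocommutativity of $\Delta$ to rewrite the swapped terms.
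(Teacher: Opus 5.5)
Your proposal is correct and takes the same route as the paper, which obtains the corollary directly from Theorems~\ref{ddasi} and~\ref{thm-limasi}. Your explicit first-order expansions, which use the commutativity of $\circ$ and the cocommutativity of $\Delta$, just spell out the identification of the QCL that the paper takes implicitly from Corollary~\ref{daop} and its coalgebra analogue.
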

\begin{rmk}
Conditions~{\rm(\ref{ed1})}, {\rm(\ref{ed2})}, {\rm(\ref{ed3})}
and {\rm(\ref{ed4})} are only sufficient conditions for
$(A,\{,\},\circ,\delta,\Delta)$ being a Poisson bialgebra, while
{\rm \cite{Lin}} gives the sufficient and necessary conditions.
\end{rmk}

We give an explicit example of the deformation-QCL process for a
Poisson bialgebra by coherent derivations.
\begin{ex}
    Let $(A, \circ, \Delta)$ be the $6$-dimensional commutative and cocommutative ASI bialgebra with a basis $\{e_1,e_2,e_3,e_4,e_5,e_6\}$
    whose non-zero product and coproduct are given respectively by
    \begin{eqnarray*}
         &&e_1\circ e_2=e_2\circ e_1=e_3,\;\;e_1\circ e_6=e_6\circ e_1=e_5,\;\;e_2\circ e_6=e_6\circ e_2=e_4;\\
         &&\Delta(e_6)=e_4\ot(e_3+e_5)+(e_3+e_5)\ot e_4.
    \end{eqnarray*}
  Assume that $\mathrm{e}^{i\frac{2\pi}{3}}\in\mathbf{k}$. Set $\zeta=\mathrm{e}^{i\frac{2\pi}{3}}$ and define linear maps $d_1,d_2\in\End_{\mathbf{k}}(A)$ respectively by
       \begin{eqnarray*}
         d_1(e_1)=\zeta e_1,\;\; d_1(e_2)=e_2,\;\;d_1(e_3)=-\zeta^2 e_3,\;\;d_1(e_4)=-\zeta e_4,\;\;d_1(e_5)=-e_5,\;\;d_1(e_6)=\zeta^2 e_6;\\
         d_2(e_1)=e_1,\;\; d_2(e_2)=\zeta e_2,\;\;d_2(e_3)=-\zeta^2 e_3,\;\;d_2(e_4)=-e_4,\;\;d_2(e_5)=-\zeta e_5,\;\;d_2(e_6)=\zeta^2 e_6.
         \end{eqnarray*}
   Take $\dc_1=-d_1,\dc_2=-d_2$. Then it is straightforward to check that $(d_1,\dc_1)$ and $(d_2,\dc_2)$ are coherent derivations on $(A, \circ, \Delta)$, $d_1,d_2$ commute, $\dc_1,\dc_2$ also commute and  {\rm(\ref{ed1})}, {\rm(\ref{ed2})}, {\rm(\ref{ed3})} and {\rm(\ref{ed4})} hold. Thus $(A,\circ, d_1,d_2,\Delta,\dc_1,\dc_2)$ induces an ASI bialgebra
   deformation $(A_h, \circ_h, \Delta_h)$ whose non-zero product and coproduct are given respectively by
    \begin{eqnarray*}
         &&e_1\circ_h e_2=\sum^{\infty}_{s=0}\zeta^{2s} e_3~\frac{h^{s}}{s!},\;\;
         e_2\circ_h e_1=\sum^{\infty}_{s=0}\zeta^{s}e_3~\frac{h^{s}}{s!},\;\;e_1\circ_h e_6=\sum^{\infty}_{s=0}e_5~\frac{h^{s}}{s!},\\
         &&e_6\circ_h e_1=\sum^{\infty}_{s=0}\zeta^{2s}e_5~\frac{h^{s}}{s!},\;\;e_2\circ_h e_6=\sum^{\infty}_{s=0}\zeta^{2s}e_4~\frac{h^{s}}{s!},\;\;e_6\circ_h e_2=\sum^{\infty}_{s=0}e_4~\frac{h^{s}}{s!};\\
         &&\Delta_h(e_6)=\sum^{\infty}_{s=0}\zeta^{2s}e_4\hat{\ot}(e_5+(-1)^s e_3)~\frac{h^{s}}{s!}+\sum^{\infty}_{s=0}(\zeta^{2s}e_3+e_5)\hat{\ot} e_4~\frac{h^{s}}{s!}.
    \end{eqnarray*}
    Moreover, in the QCL Poisson bialgebra $(A,\{,\},\circ,\delta,\Delta)$, the non-zero bracket and cobracket are given respectively by
    {\small \begin{eqnarray*}
         &&\{e_1,e_2\}=-\{e_2,e_1\}=(\zeta^2-\zeta)e_3,
        \;\{e_1,e_6\}=-\{e_6,e_1\}=(1-\zeta^2)e_5,\;\{e_2,e_6\}=-\{e_6,e_2\}=(\zeta^2-1)e_4;\\
         &&\delta(e_6)=(\zeta^{2}-1)(e_4\ot e_5-e_5\ot e_4)+2\zeta^{2}(e_3\ot e_4-e_4\ot e_3).
    \end{eqnarray*}}
\end{ex}
\section{Associative matched pair deformations}\label{S3}
We introduce the notion of an associative matched pair deformation
and show that the corresponding quasiclassical limit is a matched
pair of Poisson algebras, characterizing the
deformations-quasiclassical limits process for a Poisson
bialgebra. The antisymmetric infinitesimal bialgebra deformations
induced by coherent derivations are revisited via associative
matched pair deformations induced by derivations.
\subsection{Associative matched pair deformations and the quasiclassical limits}
\begin{defi}
    \label{defiam}
        Let $(A, \circ)$ and $(B, \cdot)$ be associative algebras,
    $l_\circ, r_\circ: A \to \End_\bfk(B)$ and $l_\cdot, r_\cdot: B \to \End_\bfk(A)$ be linear maps.
    Define a binary operation $\odot$ on the direct sum of vector spaces $A \oplus B$ by
    \begin{equation}
    (x,u)\odot(y,v):=(x\circ y+l_\cdot(u)y+r_\cdot(v)x,l_\circ(x)v+r_\circ(y)u+u\cdot v), \;\;\forall x, y\in A, u, v\in B\label{odotpipei}.
    \end{equation}
    Then $((A,\circ), (B, \cdot), l_\circ,r_\circ, l_\cdot, r_\cdot)$ is called a \textbf{matched pair of associative algebras} if $(A \oplus B, \odot)$ is an associative algebra. In this case, we denote the associative algebra $(A \oplus B,\odot)$ by $A \bowtie_{l_\cdot, r_\cdot}^{l_\circ,r_\circ} B$.
\end{defi}
\delete{
\begin{rmk}\label{rpipei}
    Replacing the base field $\bfk$ in the above definition by a commutative ring $\mathbf{R}$, namely replacing associative algebras, vector spaces and linear maps by associative algebras over $\mathbf{R}$, $\mathbf{R}$-modules and $\mathbf{R}$-linear maps accordingly, we define a \textbf{matched pair of associative algebras over $\mathbf{R}$}.
\end{rmk}
}
\delete{Our definition of a matched pair of associative algebras
is equivalent to the usual one formulated by a series of
relations.
\begin{pro}
    {\rm{(\cite{Bai})}}\label{thmma}
    Let $(A, \circ)$ and $(B, \cdot)$ be associative algebras,
    $l_\circ, r_\circ: A \to \End_\bfk(B)$ and $l_\cdot, r_\cdot: B \to \End_\bfk(A)$ be linear maps. Then $((A,\circ), (B, \cdot), l_\circ,r_\circ, l_\cdot, r_\cdot)$ is a matched pair of associative algebras if and only if the following conditions are satisfied.
    \begin{align*}
    l_{\circ}(x\circ y) = l_{\circ}(x)l_{\circ}(y),\;\;l_\circ(x)r_\circ(y) &= r_\circ(y)l_{\circ}(x),\;\; r_{\circ}(x\circ y)= r_{\cdot}(y)r_{\circ}(x), \label{abimod}\\
        l_{\cdot}(u\circ v) = l_{\cdot}(u)l_{\cdot}(v),\;\; l_\cdot(u)r_\cdot(v) &= r_\cdot(y)l_{\cdot}(u),\;\; r_{\cdot}(u\circ v)= r_\cdot(v)r_{\cdot}(u),\label{bbimod}\\
        l_\circ(x)(u\cdot v) &=l_\circ(r_{\cdot}(u)x)v +(l_{\circ}(x)u)\cdot v,\\
        r_\circ(x)(u\cdot v) &= r_\circ(l_{\cdot}(v)x)u +u\cdot(r_\circ(x)v),\\
            l_\cdot(u)(x\cdot y) &=l_\cdot(r_{\circ}(x)u)y +(l_{\cdot}(u)x)\circ y,\\
            r_\cdot(u)(x\cdot y) &=r_\cdot(r_{\circ}(y)u)x +x\circ(l_{\cdot}(u)y),\\
            l_\circ(l_\cdot(u)x)v+(r_\circ(x)u)\cdot v&=r_\circ(r_\cdot(v)x)u+u\cdot(l_\circ(x)v),\\
                l_\cdot(l_\circ(x)u)y+(r_\cdot(u)x)\cdot y&=r_\cdot(r_\circ(y)u)x+x\circ(l_\cdot(u)y),\;\;\forall x,y\in A, u,v\in B.
    \end{align*}
\end{pro}}
Let $((A,\circ), (B, \cdot), l_\circ,r_\circ, l_\cdot, r_\cdot)$
be a matched pair of associative algebras. If $(A, \circ)$,
$(B,\cdot)$ and $A \bowtie_{l_\cdot, r_\cdot}^{l_\circ,r_\circ} B$
are commutative associative algebras, then $ l_\circ=r_\circ$ and
$l_\cdot=r_\cdot$. Thus, we set $\rho_{\circ}:=l_\circ=r_\circ$
and $\rho_{\cdot}:=l_\cdot=r_\cdot$, and then denote $((A,\circ),
(B, \cdot), l_\circ,r_\circ, l_\cdot, r_\cdot)$ and $A
\bowtie_{l_\cdot, r_\cdot}^{l_\circ,r_\circ} B$ by $((A,\circ),(B,
\cdot),\rho_{\circ},\rho_{\cdot})$ and $A
\bowtie_{\rho_{\cdot}}^{\rho_{\circ}} B$, respectively. In this
case, $((A,\circ),(B, \cdot),\rho_{\circ},\rho_{\cdot})$ is called
a \textbf{ matched pair of commutative associative algebras}.

\begin{defi}{\rm(\cite{NB})}
    \label{ppipei}
    Let $(P,\{,\},\circ)$ and $(Q,[,],\cdot)$ be Poisson algebras, $\rho_{\{,\}},\rho_{\circ}: P\to \mathrm{End}_{\bfk}(Q)$ and $\rho_{[,]},\rho_{\cdot}: Q\to \mathrm{End}_{\bfk}(P)$ be linear maps.
    For all $x, y\in P, u,v\in Q$, define binary operations $\llbracket,\rrbracket$ and $\odot$ on the direct sum of vector spaces $P \oplus Q$ by
\begin{equation}
\begin{split}
\llbracket(x,u),(y,v)\rrbracket:=&(\{x, y\}+\rho_{[,]}(u)y-\rho_{[,]}(v)x,\rho_{\{,\}}(x)v-\rho_{\{,\}}(y)u+[u,
v]),\\
(x,u)\odot(y,v):=&(x\circ y+\rho_{\cdot}(u)y+\rho_{\cdot}(v)x,\rho_{\circ}(x)v+\rho_{\circ}(y)u+u\cdot v).
\end{split}
\label{ppod}
\end{equation}
If $(P\oplus Q,\llbracket,\rrbracket,\odot)$ is a Poisson algebra,
then we call
$((P,\{,\},\circ),(Q,[,],\cdot),\rho_{\{,\}},\rho_{\circ},\rho_{[,]},\rho_{\cdot})$
a \textbf{matched pair of Poisson algebras} and denote the Poisson
algebra $(P\oplus Q,\llbracket,\rrbracket,\odot)$ by $P
\bowtie_{\rho_{[,]},\rho_{\cdot}}^{\rho_{\{,\}},\rho_{\circ}} Q$.
\end{defi}
\begin{defi}
Let $(A_{h},\circ_{h})$ and $(B_{h},\cdot_{h})$ be
topologically free associative algebras. Let $l_{\circ_h},r_{\circ_h}:
    A_{h}\rightarrow \mathrm{End}_{\mathbf{K}}(B_{h})$ and $l_{\cdot_h},r_{\cdot_h}: B_{h}\rightarrow \mathrm{End}_{\mathbf{K}}(A_{h})$ be $\mathbf{K}$-linear maps.
    If $((A_{h},\circ_{h}),(B_{h},\cdot_{h}),l_{\circ_h},r_{\circ_h},l_{\cdot_h},r_{\cdot_h})$ is a matched pair of associative algebras over $\mathbf{K}$, then we call it a \textbf{matched pair of topologically free associative algebras} and denote the topologically free associative algebra structure on $A_{h}\oplus B_{h}$ by $A_{h}\bowtie_{l_{\cdot_h},r_{\cdot_h}}^{l_{\circ_h},r_{\circ_h}} B_{h}$.

    Let $((A,\circ), (B, \cdot), l_\circ,r_\circ, l_\cdot, r_\cdot)$ be a matched pair of associative algebras and $((A_{h},\circ_{h}),(B_{h},\cdot_{h})$, $l_{\circ_h},r_{\circ_h},l_{\cdot_h},r_{\cdot_h})$ be a matched pair of topologically free associative algebras. If $A_{h}\bowtie_{l_{\cdot_h},r_{\cdot_h}}^{l_{\circ_h},r_{\circ_h}} B_{h}$ is an associative deformation of $A \bowtie_{l_\cdot, r_\cdot}^{l_\circ,r_\circ} B$, then we call $(A_{h},\circ_{h},V_{h},\cdot_{h},l_{\circ_h},r_{\circ_h},l_{\cdot_h},r_{\cdot_h})$ an \textbf{associative matched pair deformation} of
    $((A,\circ), (B, \cdot), l_\circ,r_\circ, l_\cdot, r_\cdot)$
\end{defi}

\begin{rmk}\label{ids}
    Let $\imath:A_{h}\oplus B_{h}\to(A\oplus B)_{h}$ be the map defined by
    \[\imath\,\Big(\sum_{s=0}^{\infty}x_{s}~h^{s},\sum_{s=0}^{\infty}v_{s}~h^{s}\Big):=\sum_{s=0}^{\infty}(x_{s},v_{s})~h^{s},\;\;\forall s\in\NN,x_{s}\in A, v_{s}\in B.\]
    Then $\imath$ is a $\bf K$-linear isomorphism.
With $A_{h}\oplus B_{h}$ identified with $(A\oplus B)_{h}$ this way, it makes sense to say that $A_{h}\bowtie_{l_{\cdot_h},r_{\cdot_h}}^{l_{\circ_h},r_{\circ_h}} B_{h}$ is an associative deformation of $A \bowtie_{l_\cdot, r_\cdot}^{l_\circ,r_\circ} B$.
\end{rmk}
\begin{pro}\label{edm}\label{proedm}
  Let $((A,\circ), (B, \cdot), l_\circ,r_\circ, l_\cdot, r_\cdot)$ be a matched pair of associative algebras and $((A_{h},\circ_{h}),(B_{h},\cdot_{h}),l_{\circ_h},r_{\circ_h},l_{\cdot_h},r_{\cdot_h})$ be a matched
  pair of topologically free associative algebras. Then $((A_{h},\circ_{h}),(B_{h},\cdot_{h}),l_{\circ_h},r_{\circ_h},l_{\cdot_h},r_{\cdot_h})$ is an associative matched pair deformation of
    $((A,\circ)$, $(B, \cdot), l_\circ,r_\circ$, $l_\cdot$, $r_\cdot)$
  if and only if the following congruences
    hold$:$
    \begin{align}
        x\circ_{h}y&\equiv x\circ y\pmod h,&u\cdot_{h}v&\equiv u\cdot v\pmod h,\label{e1}\\
        l_{\circ_{h}}(x)v&\equiv l_\circ(x)v\pmod h,&r_{\circ_{h}}(y)u&\equiv r_\circ(y)u\pmod h,\label{e2}\\
        l_{\cdot_{h}}(u)y&\equiv l_\cdot(u)y\pmod h,&r_{\cdot_{h}}(v)x&\equiv r_\cdot(v)x\pmod h,\;\;\forall x,y\in A, u,v\in B.\label{e3}
    \end{align}
\end{pro}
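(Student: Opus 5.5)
The plan is to unwind the definition. Being an associative matched pair deformation means, by definition and Remark~\ref{ids}, that the product $\odot_h$ on $A_h\oplus B_h\cong(A\oplus B)_h$ (the product of $A_{h}\bowtie_{l_{\cdot_h},r_{\cdot_h}}^{l_{\circ_h},r_{\circ_h}} B_{h}$) satisfies (\ref{defa2}) with respect to $\odot$. In other words, we need
\[
(x,u)\odot_h(y,v)\equiv (x,u)\odot(y,v)\pmod h
\]
for all $x,y\in A$ and $u,v\in B$. Associativity of $\odot_h$ is already supplied by the hypothesis that we have a matched pair of topologically free associative algebras, so only this congruence has to be checked.

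Next we write $\odot_h$ explicitly by (\ref{odotpipei}) over $\bfK$:
\[
(x,u)\odot_h(y,v)=\bigl(x\circ_h y+l_{\cdot_h}(u)y+r_{\cdot_h}(v)x,\;l_{\circ_h}(x)v+r_{\circ_h}(y)u+u\cdot_h v\bigr).
\]
Under $\imath$, a pair of series is congruent to a pair of constants modulo $h$ exactly when each component is congruent modulo $h$, because $\imath$ matches coefficients of $h^s$ componentwise.

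For the direction ``if'', we substitute (\ref{e1})--(\ref{e3}) term by term. The result is congruent to $(x\circ y+l_\cdot(u)y+r_\cdot(v)x,\;l_\circ(x)v+r_\circ(y)u+u\cdot v)=(x,u)\odot(y,v)$.

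For the direction ``only if'', we specialize the arguments, using that $\odot_h$ is $\bfK$-bilinear and the componentwise nature of the congruence:
\begin{itemize}
\item taking $u=v=0$ gives $x\circ_h y\equiv x\circ y$;
\item taking $x=y=0$ gives $u\cdot_h v\equiv u\cdot v$;
\item taking $(x,0)\odot_h(0,v)$ gives $r_{\cdot_h}(v)x\equiv r_\cdot(v)x$ in the first component and $l_{\circ_h}(x)v\equiv l_\circ(x)v$ in the second;
\item taking $(0,u)\odot_h(y,0)$ gives $l_{\cdot_h}(u)y\equiv l_\cdot(u)y$ and $r_{\circ_h}(y)u\equiv r_\circ(y)u$.
\end{itemize}

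No step is a genuine obstacle. The only point needing care is making precise that congruence modulo $h$ in $(A\oplus B)_h$ is equivalent to componentwise congruence in $A_h$ and $B_h$ via $\imath$. This follows directly from the definition of $\imath$.
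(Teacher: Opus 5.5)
Your proposal is correct and matches the paper's proof: you reduce, via the identification $\imath$, to the congruence $(x,u)\odot_h(y,v)\equiv(x,u)\odot(y,v)\pmod h$, expand both sides using (\ref{odotpipei}), and then specialize the arguments. The only difference is that you write out the specific choices $(x,0)\odot_h(y,0)$, $(0,u)\odot_h(0,v)$, $(x,0)\odot_h(0,v)$ and $(0,u)\odot_h(y,0)$, which the paper leaves implicit as ``suitably choosing $x,y,u,v$''.
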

\begin{proof}
    Let $\odot_h$ denote the associative product of the topologically free associative algebra $A_{h}\bowtie_{l_{\cdot_h},r_{\cdot_h}}^{l_{\circ_h},r_{\circ_h}} B_{h}$. Then $((A_{h},\circ_{h}),(B_{h},\cdot_{h}),
    l_{\circ_h},r_{\circ_h},l_{\cdot_h},r_{\cdot_h})$ is an associative matched pair deformation of $((A,\circ), (B, \cdot)$, $l_\circ,r_\circ$, $l_\cdot, r_\cdot)$ if and only if
    \begin{equation*}
    (x,u)\odot_{h}(y,v)\equiv (x,u)\odot(y,v)\pmod h ,\;\;\forall x,y\in A, u,v\in B.
    \end{equation*}
    It holds if and only if for all
    $x,y\in A, u,v\in B$,
      \begin{equation}\label{em}
      \begin{split}
    (x\circ_h y+l_{\cdot_{h}}(u)y+r_{\cdot_{h}}(v)x,l_{\circ_{h}}(x)v+r_{\circ_{h}}(y)u+u\cdot_h v)
    \equiv\\\; (x\circ y+l_\cdot(u)y+r_\cdot(v)x,l_\circ(x)v+r_\circ(y)u+u\cdot v)\pmod h.
    \end{split}
    \end{equation}
    By suitably choosing $x$, $y$, $u$ and $v$, we find that \eqref{em} holds if and only if
    (\ref{e1}), (\ref{e2}) and (\ref{e3}) hold.  \end{proof}
\begin{rmk}
Condition (\ref{e1}) holds if and only if
    $(A_{h},\circ_{h})$ is an
    associative deformation of $(A,\circ)$ and  $(B_{h},\cdot_{h})$ is an
    associative deformation of $(B,\cdot)$.
\end{rmk}

\begin{defi}
 Let $((A,\circ),(B, \cdot),\rho_{\circ},\rho_{\cdot})$ be a matched pair of commutative associative algebras and $((A_{h},\circ_{h}),(B_{h},\cdot_{h}),l_{\circ_h},r_{\circ_h},l_{\cdot_h},r_{\cdot_h})$ be an associative matched pair deformation. Denote the corresponding \qcls of $(A_{h},\circ_{h})$ and $(B_{h},\cdot_{h})$ by $(A,\{,\},\circ)$ and $(B,[,],\cdot)$, respectively. Define linear maps $\rho_{\{,\}}: A\rightarrow
\mathrm{End}_{\bfk}(B),\rho_{[,]}: B\to
\mathrm{End}_{\bfk}(A)$ by
\begin{eqnarray}
    \rho_{\{,\}}(x)(v)&\equiv&\frac{l_{\circ_h}(x)v-r_{\circ_h}(x)v}{h}\pmod{h}, \label{cmod}\\
\rho_{[,]}(v)(x)&\equiv&\frac{l_{\cdot_h}(v)x-r_{\cdot_h}(v)x}{h}\pmod{h}, \;\; \forall x\in A, v\in B\label{cmod2}.
\end{eqnarray}
Then $((A,\{,\},\circ),(B,[,]
,\cdot),\rho_{\{,\}},\rho_{\circ},\rho_{[,]},\rho_{\cdot})$ is
called the \textbf{quasiclassical limit (\qcl) of the associative
matched pair deformation}
$(A_{h},\circ_{h},V_{h},\cdot_{h},l_{\circ_h},r_{\circ_h},l_{\cdot_h},r_{\cdot_h})$.
\end{defi}

\begin{thm}\label{qclpipei}
 Let $((A,\circ),(B, \cdot),\rho_{\circ},\rho_{\cdot})$ be a matched pair of commutative associative algebras, $((A_{h},\circ_{h}),(B_{h},\cdot_{h}),l_{\circ_h},r_{\circ_h},l_{\cdot_h},r_{\cdot_h})$ be an
  associative matched pair deformation and $((A,\{,\},\circ),(B,[,]$, $\cdot)$, $\rho_{\{,\}},\rho_{\circ},\rho_{[,]},\rho_{\cdot})$ be the QCL.
 Then $((A,\{,\},\circ),(B,[,] ,\cdot),\rho_{\{,\}},\rho_{\circ},\rho_{[,]},\rho_{\cdot})$ is a matched pair of Poisson algebras. Moreover, the \qcl of the associative deformation
$A_{h}\bowtie_{l_{\cdot_h},r_{\cdot_h}}^{l_{\circ_h},r_{\circ_h}} B_{h}$ of the commutative associative algebra $A\bowtie_{\rho_{\cdot}}^{\rho_{\circ}} B$ is exactly the Poisson algebra $A\bowtie_{\rho_{[,]},\rho_{\cdot}}^{\rho_{\{,\}},\rho_{\circ}} B$.
\end{thm}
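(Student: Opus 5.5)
The plan is to reduce everything to Theorem~\ref{thacl} applied to the big algebra. Write $\odot_h$ for the product of $A_{h}\bowtie_{l_{\cdot_h},r_{\cdot_h}}^{l_{\circ_h},r_{\circ_h}} B_{h}$. By definition of an associative matched pair deformation, and using the identification of Remark~\ref{ids}, $((A\oplus B)_h,\odot_h)$ is an associative deformation of $A\bowtie_{\rho_{\cdot}}^{\rho_{\circ}} B$. This latter algebra is commutative, since $(A,\circ)$ and $(B,\cdot)$ are commutative and $l_\circ=r_\circ=\rho_\circ$, $l_\cdot=r_\cdot=\rho_\cdot$. Hence Theorem~\ref{thacl} applies, and the QCL $(A\oplus B,\llbracket,\rrbracket',\odot)$ is a Poisson algebra, where
\[\llbracket (x,u),(y,v)\rrbracket'\equiv \frac{(x,u)\odot_h(y,v)-(y,v)\odot_h(x,u)}{h}\pmod h .\]

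The main step is to compute $\llbracket,\rrbracket'$ explicitly and show it equals the bracket $\llbracket,\rrbracket$ of (\ref{ppod}) built from $\{,\}$, $[,]$, $\rho_{\{,\}}$ and $\rho_{[,]}$. Expanding (\ref{odotpipei}) over $\mathbf K$ gives the $A$-component of the commutator as
\[x\circ_h y-y\circ_h x+(l_{\cdot_h}(u)y-r_{\cdot_h}(u)y)-(l_{\cdot_h}(v)x-r_{\cdot_h}(v)x).\]
Dividing by $h$ and reducing mod $h$ gives $\{x,y\}+\rho_{[,]}(u)y-\rho_{[,]}(v)x$ by the definition of the QCL and (\ref{cmod2}). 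In the same way, the $B$-component becomes $\rho_{\{,\}}(x)v-\rho_{\{,\}}(y)u+[u,v]$ by (\ref{cmod}). The only care needed is to check that these quotients make sense. Each difference vanishes mod $h$ by (\ref{e1})--(\ref{e3}) together with $l_\circ=r_\circ$ and $l_\cdot=r_\cdot$. Since $h$ is a non-zero-divisor on topologically free modules, division by $h$ is well defined, so the defining congruences of $\rho_{\{,\}}$ and $\rho_{[,]}$ are legitimate. The commutative product of the limit is $\odot$, which is exactly the second formula of (\ref{ppod}) with $\rho_\circ,\rho_\cdot$. This proves the ``moreover'' statement: the QCL of the big deformation is $(A\oplus B,\llbracket,\rrbracket,\odot)$.

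Given this identification, $(A\oplus B,\llbracket,\rrbracket,\odot)$ is a Poisson algebra. The components $(A,\{,\},\circ)$ and $(B,[,],\cdot)$ are Poisson algebras by Theorem~\ref{thacl} applied to $(A_h,\circ_h)$ and $(B_h,\cdot_h)$; these are deformations of commutative algebras by the remark following Proposition~\ref{proedm}. Definition~\ref{ppipei} then says precisely that $((A,\{,\},\circ),(B,[,],\cdot),\rho_{\{,\}},\rho_{\circ},\rho_{[,]},\rho_{\cdot})$ is a matched pair of Poisson algebras. The only real obstacle is the bookkeeping in the componentwise expansion, in particular keeping the signs of the $\rho_{[,]}(v)x$ and $\rho_{\{,\}}(y)u$ terms straight. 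No further identities need to be verified by hand.
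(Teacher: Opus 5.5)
Your proposal is correct and follows essentially the same route as the paper: expand the commutator in $A_h\bowtie_{l_{\cdot_h},r_{\cdot_h}}^{l_{\circ_h},r_{\circ_h}}B_h$ componentwise and identify its first-order term, via (\ref{cmod}) and (\ref{cmod2}), with the bracket $\llbracket,\rrbracket$ of (\ref{ppod}). Then apply Theorem~\ref{thacl} to the big deformation and to $(A_h,\circ_h)$, $(B_h,\cdot_h)$ to get both the matched pair of Poisson algebras and the ``moreover'' statement.
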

\begin{proof}
By Theorem~\ref{thacl}, $(A,\{,\},\circ)$ and $(B,[,],\cdot)$ are Poisson algebras.
Set $(A\oplus B,\odot):=A\bowtie_{\rho_{\cdot}}^{\rho_{\circ}} B$ and $(A_h\oplus B_h,\odot_h):=A_{h}\bowtie_{l_{\cdot_h},r_{\cdot_h}}^{l_{\circ_h},r_{\circ_h}} B_{h}$.
For all $x, y\in A, u,v\in B$, define binary operations $\llbracket,\rrbracket$ and $\odot$  on $A \oplus B$ by (\ref{ppod}). Then we have
    \begin{align*}
    &(x,u)\odot_{h}(y,v)-(y,v)\odot_{h}(x,u)\\
    &=(x\circ_{h}y-y\circ_{h}x+l_{\cdot_{h}}(u)y-r_{\cdot_{h}}(u)y+r_{\cdot_{h}}(v)x-l_{\cdot_{h}}(v)x, 0)\\
    &\qquad+(0,l_{\circ_{h}}(x)v-r_{\circ_{h}}(x)v+r_{\circ_{h}}(y)u-l_{\circ_{h}}(y)u+u\cdot_{h}v-v\cdot_{h}u) \\
    &\equiv(\{x, y\}+\rho_{[,]}(u)y-\rho_{[,]}(v)x,\rho_{\{,\}}(x)v-\rho_{\{,\}}(y)u+[u,
    v])\pmod{h^2}.
    \end{align*}
    Thus $(A\oplus B,\llbracket,\rrbracket,\odot)$ coincides with the QCL of $(A_h\oplus B_h,\odot_h)$.
    By Theorem~\ref{thacl}, $(A\oplus B,\llbracket,\rrbracket,\odot)$ is a Poisson algebra. Hence $((A,\{,\},\circ),(B,[,] ,\cdot),\rho_{\{,\}},\rho_{\circ},\rho_{[,]},\rho_{\cdot})$ is a matched pair of Poisson algebras. The last statement holds since the Poisson algebras $A\bowtie_{\rho_{[,]},\rho_{\cdot}}^{\rho_{\{,\}},\rho_{\circ}} B$ and $(A\oplus B,\llbracket,\rrbracket,\odot)$ coincide.
\end{proof}

\subsection{Associative matched pair deformations and antisymmetric infinitesimal bialgebra deformations}

Let $V$ be a vector space and set $V^{\ast}:=\Hom_{\bfk}(V,\bfk)$.
Let $\langle,\rangle:V^{\ast}\ot V\to\bfk$ be the natural pairing
between the dual space of a vector space and the vector space,
that is,
\[\langle u^{\ast},v \rangle:=u^{\ast}(v),\;\;\forall u^{\ast}\in V^{\ast},v\in V.\]
Let $W$ be a vector space and $T:V\to W$ be a linear map.
Define a linear map $T^{\ast}:W^{\ast}\to V^{\ast}$ by
\begin{equation*}
\langle T^{\ast}(w^{\ast}), v \rangle = \langle w^{\ast}, T(v) \rangle, \;\;
\forall v \in V, \; w^{\ast} \in W^{\ast},
\end{equation*}
called the linear dual map of $T$.

Let $A$ and $V$ be vector spaces. For a linear map $\varrho:A\rightarrow \textrm{End}_{\bfk}(V)$, we let $\varrho^{\ast}:A\rightarrow \textrm{End}_{\bfk}(V^{\ast})$ be the unique linear map satisfying
\begin{equation*}
\langle\varrho^{\ast}(x)u^{\ast},v\rangle=\langle u^{\ast},\varrho(x)v\rangle, \quad \forall x\in A, u^{\ast}\in V^{\ast},v\in V.
\end{equation*}
\begin{pro}
    \label{mpasi} {\rm{(\cite{Bai})}}
Let $A$ be a finite-dimensional vector space, $\circ$ be a binary operation on $A$, and $\Delta:A\to A\ot A$ be a linear map. Define a binary operation
$\cdot$  on $A^{\ast}$  by
    \begin{equation*}
    \langle a^{\ast}\cdot b^{\ast},x\rangle=\langle a^{\ast}\ot b^{\ast},\Delta(x)\rangle, \quad\forall  x\in A, a^{\ast},b^{\ast}\in A^{\ast},
    \end{equation*}
called the dual operation of $\Delta$. Then $(A,\circ,\Delta)$ is
an ASI bialgebra if and only if
$((A,\circ),(A^{\ast},\cdot),R_{\circ}^{\ast},L_{\circ}^{\ast}$,
$R_{\cdot}^{\ast},L_{\cdot}^{\ast})$ is a matched pair of
associative algebras.  Moreover, $(A,\circ,\Delta)$
is a commutative and cocommutative ASI bialgebra if and only if
$((A,\circ),(A^{\ast},\cdot),R_{\circ}^{\ast},R_{\cdot}^{\ast})$
is a matched pair of commutative associative algebras.
\end{pro}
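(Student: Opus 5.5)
The plan is to expand the associativity of the product $\odot$ on $A\oplus A^{\ast}$ defined by (\ref{odotpipei}) with $l_\circ=R_\circ^{\ast}$, $r_\circ=L_\circ^{\ast}$, $l_\cdot=R_\cdot^{\ast}$, $r_\cdot=L_\cdot^{\ast}$. Then I will pair each resulting component identity with suitable elements of $A$ or $A^{\ast}$, so that it becomes an identity between $\circ$ and $\Delta$. By trilinearity, associativity of $\odot$ reduces to the eight cases of triples drawn from $A$ or $A^{\ast}$. The cases $(x,y,z)$ and $(a^{\ast},b^{\ast},c^{\ast})$ give associativity of $\circ$ and of $\cdot$, and the latter is dual to coassociativity of $\Delta$ because $A$ is finite-dimensional. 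The six mixed cases each split into an $A$-component and an $A^{\ast}$-component, which yields the usual list of twelve matched-pair relations.

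The key step is to translate each mixed relation by pairing. I will repeatedly use the identities
\[\langle R_\circ^{\ast}(x)a^{\ast},y\rangle=\langle a^{\ast},y\circ x\rangle,\qquad \langle R_\cdot^{\ast}(a^{\ast})x,b^{\ast}\rangle=\langle a^{\ast}\otimes b^{\ast},\Delta(x)\rangle,\]
together with their $L$-analogues. The bimodule-type relations (for example $l_\circ(x\circ y)=l_\circ(x)l_\circ(y)$ for $R_\circ^{\ast}$) hold automatically from associativity, since $R^{\ast}_\circ,L^{\ast}_\circ$ give the coregular bimodule, and likewise on the $A^{\ast}$ side. The genuinely new relations are of two kinds. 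The first kind, such as $R_\circ^{\ast}(x)(a^{\ast}\cdot b^{\ast})=R_\circ^{\ast}(L_\cdot^{\ast}(a^{\ast})x)b^{\ast}+(R_\circ^{\ast}(x)a^{\ast})\cdot b^{\ast}$, becomes (\ref{asi1}) after pairing with an element $y\in A$. The second kind, the "exchange" relations mixing $l$ and $r$, becomes (\ref{asi2}) after pairing with $b^{\ast}\otimes c^{\ast}$ or similar. I expect the relations to collapse pairwise: the two first-kind $A^{\ast}$-side relations are both equivalent to (\ref{asi1}), and the $A$-side relations involving $R_\cdot^{\ast}(a^{\ast})(x\circ y)$ are also equivalent to (\ref{asi1}). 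The two exchange relations should each be equivalent to (\ref{asi2}), one of them possibly up to applying $\sigma$ and swapping $x,y$. Conversely, (\ref{asi1}) and (\ref{asi2}) together with associativity and coassociativity recover all the relations, which gives the equivalence.

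The main obstacle is the bookkeeping in this translation. I have to make sure every one of the twelve relations is indeed a consequence of (\ref{asi1}) or (\ref{asi2}), including the correct placement of $\sigma$ and the left versus right actions. I would handle it by writing $\Delta(x)=x_{(1)}\otimes x_{(2)}$ and evaluating each relation on a test element, which turns every identity into an equality of scalars like $\langle a^{\ast},x_{(1)}\rangle\langle b^{\ast},x_{(2)}\circ y\rangle$. Nondegeneracy of the pairing in finite dimensions then lets me read off the tensor identity.

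For the commutative statement, the plan is as follows. If $\circ$ is commutative then $L_\circ=R_\circ$, so $R^{\ast}_\circ=L^{\ast}_\circ$. If $\Delta$ is cocommutative then $\cdot$ is commutative, so $R^{\ast}_\cdot=L^{\ast}_\cdot$. Hence the matched pair has $l=r$ on both sides, and $A\bowtie B$ is commutative by the form of (\ref{odotpipei}). Conversely, if the matched pair is one of commutative associative algebras, then $(A,\circ)$ is commutative and $(A^{\ast},\cdot)$ is commutative, the latter meaning $\Delta=\sigma\Delta$. Combining this with the first part gives the claim.
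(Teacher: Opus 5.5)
The paper gives no proof of this proposition and simply quotes it from \cite{Bai}; your plan is correct and is essentially the standard argument there. You expand associativity of $\odot$ on $A\oplus A^{\ast}$ componentwise, note that the six bimodule-type relations are automatic from associativity of $\circ$ and $\cdot$, and dualize the rest. The four relations from the triples $(x,y,a^{\ast})$, $(a^{\ast},x,y)$, $(x,a^{\ast},b^{\ast})$ and $(a^{\ast},b^{\ast},x)$ are each (\ref{asi1}), possibly with $x,y$ swapped. The two exchange relations from $(x,a^{\ast},y)$ and $(a^{\ast},x,b^{\ast})$ are each (\ref{asi2}), the second one after applying $\sigma$. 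The commutative case then follows exactly as you say, since commutativity of $\circ$ and of $\cdot$ (equivalently, cocommutativity of $\Delta$) forces $L^{\ast}_{\circ}=R^{\ast}_{\circ}$ and $L^{\ast}_{\cdot}=R^{\ast}_{\cdot}$.
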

In the above proposition, the ASI bialgebra $(A,\circ,\Delta)$ is called the \textbf{ASI bialgebra corresponding to} the matched pair of associative algebras $((A,\circ),(A^*,\cdot),R_\circ^*,L_\circ^*,R_\cdot^*,L_\cdot^*)$,
and conversely the latter is called the \textbf{matched pair of associative algebras corresponding to} $(A,\circ,\Delta)$. Similarly, in the commutative and cocommutative case, $(A,\circ,\Delta)$ is called the \textbf{commutative and cocommutative ASI bialgebra corresponding to} $((A,\circ),(A^*,\cdot),R_\circ^*,R_\cdot^*)$,
and conversely the latter is called the  \textbf{matched pair of commutative associative algebras corresponding to} $(A,\circ,\Delta)$. Similar notions are applied to Propositions~\ref{eqvtam}, \ref{eqvmpp}, \ref{eqvbasi}, \ref{eqvbasih} and \ref{eqvbpb}.

Let $V$ be a vector space. Then by
\cite[\uppercase\expandafter{\romannumeral16}.7, Exercise 2]{Ka},
there is the $\bfK$-module isomorphism
$\textrm{Hom}_{\mathbf{K}}(V_{h},\mathbf{K})\cong\textrm{Hom}_{\bfk}(V,\bfk)[[h]]=(V^{\ast})_{h}$.
Set $V^{\star}_{h}:=(V^{\ast})_{h}$ and define a pairing $\langle\
,\ \rangle_{\bf K}:V_{h}^{\star}\hat{\ot}V_{h}\to \bfK$ by
\[\langle \sum_{s=0}^{\infty}u_{s}^{\ast}~h^{s},\sum_{s=0}^{\infty}v_{s}~h^{s}\rangle_{\bfK}:=\sum_{s,l\geq 0}\langle u_{s}^{\ast},v_l \rangle~h^{s+l},\;\;\forall u_{s}^{\ast}\in
V^{\ast}, v_s\in V.\] For topologically free $\bf K$-modules $A_h$
and $V_h$, and a $\bf K$-linear map $\varrho_{h}:A_h\to
\textrm{End}_{\bf K}(V_{h})$, let $\varrho^{\star}_{h}:A_{h}\to
\textrm{End}_{\bf K}(V^{\star}_h)$ be the unique $\bf K$-linear
map such that
\begin{equation*}
\langle
\varrho^{\star}_{h}(x^{\prime})u^{\star},v^{\prime}\rangle_{\bf
K}=\langle u^{\star},\varrho_{h}(x^{\prime})v^{\prime}\rangle_{\bf
K}, \;\;\forall x^{\prime}\in A_h , v^{\prime}\in V_h ,
u^{\star}\in V^{\star}_{h}.
\end{equation*}

Let $V$ and $W$ be finite-dimensional vector spaces. The pairing $\langle\ ,\ \rangle_{\bf K}$ can be turned into
a pairing between $W_{h}^{\star}\hat{\ot}V_{h}^{\star}$ and $W_{h}\hat{\ot}V_{h}$ by
\[\langle w^{\star}\hat{\ot}v^{\star},w_{h}\hat{\ot}v_{h} \rangle_{\bf K}=\langle w^{\star},w_{h}\rangle_{\bf K}\langle v^{\star},v_{h}\rangle_{\bf K},\quad\forall w^{\star}\hat{\ot}v^{\star}\in W_{h}^{\star}\hat{\ot}V_{h}^{\star},w_{h}\hat{\ot}v_{h} \in W_{h}\hat{\ot}V_{h}.\]
\begin{pro}
    \label{eqvtam}
    Let $A$ be a finite-dimensional vector space, $\circ_h$ be a $\bf K$-bilinear operation on $A_h$ and $\Delta_h:A_h\to A_h\hat{\ot} A_h$ be a $\bf K$-map. Define an operation $\cdot_h$ on $A_h^{\star}$ by
    \[\langle a^{\star}\cdot_h b^{\star},x_h\rangle_{\bf K}=\langle a^{\star}\hat{\ot}b^{\star},\Delta_h(x_h)\rangle_{\bf K},\;\;\forall a^{\star},b^{\star}\in A_h^{\star},\;x_h\in A_h,\]
    called the topological dual operation of $\Delta_h$. Then $(A_h,\circ_h,\Delta_h)$ is a topologically free ASI bialgebra if and only if $((A_h,\circ_h),(A^{\star}_h,\cdot_h),R^{\star}_{\circ_h},L^{\star}_{\circ_h},R^{\star}_{\cdot_h},
    L^{\star}_{\cdot_h})$ is a matched pair of topologically free associative algebras. \delete{In this case, we call $(A_h,\circ_h,\Delta_h)$ the \textbf{topologically free ASI bialgebra corresponding to}
    the matched pair of topologically free associative algebras $((A_h,\circ_h),(A^{\star}_h,\cdot_h),R^{\star}_{\circ_h},L^{\star}_{\circ_h},R^{\star}_{\cdot_h},L^{\star}_{\cdot_h})$
    and $((A_h,\circ_h),(A^{\star}_h,\cdot_h),R^{\star}_{\circ_h},L^{\star}_{\circ_h},R^{\star}_{\cdot_h},L^{\star}_{\cdot_h})$ the \textbf{matched pair of topologically free associative algebras corresponding to}
    the topologically free ASI bialgebra $(A_h,\circ_h,\Delta_h)$.} 
\end{pro}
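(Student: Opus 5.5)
The plan is to reproduce the proof of Proposition~\ref{mpasi} from \cite{Bai} over the ground ring $\mathbf{K}$ in place of $\bfk$, checking at each step that the dualization still works for topologically free modules. Since $A$ is finite-dimensional, $A_h=A\ot\mathbf{K}$ and $A_h^{\star}=A^{\ast}\ot\mathbf{K}$ are free $\mathbf{K}$-modules of finite rank. For the same reason $A_h\hat{\ot}A_h\cong (A\ot A)[[h]]$ is free of finite rank, and the pairing $\langle\,,\rangle_{\bf K}$ between $A_h^{\star}\hat{\ot}A_h^{\star}$ and $A_h\hat{\ot}A_h$ is nondegenerate: dual bases of $A$ and $A^{\ast}$ give dual $\mathbf{K}$-bases. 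Consequently a tensor in $A_h\hat{\ot}A_h$ (or in the triple tensor product) vanishes if and only if it pairs to zero with every $a^{\star}\hat{\ot}b^{\star}$ (respectively $a^{\star}\hat{\ot}b^{\star}\hat{\ot}c^{\star}$). The map $\varrho_h\mapsto\varrho_h^{\star}$ is also well defined and reverses composition. These are the only facts about $\mathbf{K}$ that the argument will use, and I would record them first as a preliminary step.

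Next I would write associativity of $\odot_h$ on $A_h\oplus A_h^{\star}$, defined by (\ref{odotpipei}) with the six maps $R^{\star}_{\circ_h},L^{\star}_{\circ_h},R^{\star}_{\cdot_h},L^{\star}_{\cdot_h}$, as a finite list of identities. These are obtained by splitting the triple products into the components with arguments from $A_h$ or $A_h^{\star}$. The list consists of:
\begin{itemize}
\item associativity of $\circ_h$ and of $\cdot_h$;
\item the bimodule conditions for $(R^{\star}_{\circ_h},L^{\star}_{\circ_h})$ on $A_h^{\star}$ and for $(R^{\star}_{\cdot_h},L^{\star}_{\cdot_h})$ on $A_h$;
\item the compatibility identities mixing the two actions.
\end{itemize}
This is purely formal. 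Because $\mathbf{K}$ is commutative and everything is $\mathbf{K}$-multilinear, the derivation is literally the one over a field.

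Then I would dualize each identity against $\langle\,,\rangle_{\bf K}$, exactly as in \cite{Bai}. Associativity of $\cdot_h$ pairs against $x_h$ to give coassociativity (\ref{eq:3}) of $\Delta_h$, by nondegeneracy on the triple tensor product. The bimodule conditions for $R^{\star}_{\circ_h},L^{\star}_{\circ_h}$ are equivalent to associativity of $\circ_h$, which already holds. The bimodule conditions for $R^{\star}_{\cdot_h},L^{\star}_{\cdot_h}$ translate into coassociativity again. The mixed compatibility identities, paired with elements $a^{\star}\hat{\ot}b^{\star}$ and $x_h,y_h$, become (\ref{tib1}) (some of them twice, in equivalent forms) and (\ref{tasib1}), where $\sigma_{\bf K}$ appears from swapping the two pairing slots. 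Each translation is an equivalence, so both directions of the proposition follow together.

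The main obstacle I expect is bookkeeping rather than conceptual. I must make sure that each of the matched-pair compatibility conditions dualizes to one of (\ref{tib1}), (\ref{tasib1}) (or is a consequence of the others), and that no condition is lost when passing to $\mathbf{K}$. The key point there is nondegeneracy of the $\mathbf{K}$-pairing, for which finite-dimensionality of $A$ is essential: without it, $\operatorname{Hom}_{\mathbf{K}}(A_h\hat{\ot}A_h,\mathbf{K})$ would not be $A_h^{\star}\hat{\ot}A_h^{\star}$.
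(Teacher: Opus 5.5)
Your proposal is correct, but it argues differently from the paper. You redo the dualization of \cite{Bai} directly over $\mathbf{K}$, using only that $A_h$ and $A_h^{\star}$ are free of finite rank with a perfect $\mathbf{K}$-pairing.

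The paper never repeats that computation. It passes to the fraction field $\mathbb{F}$ of $\mathbf{K}$ and sets $A_{(h)}:=A_h\otimes_{\mathbf{K}}\mathbb{F}$, with the $\mathbb{F}$-bilinear extensions $\circ_{(h)},\Delta_{(h)}$. It observes that the ASI identities hold on the $\mathbf{K}$-lattice $A_h$ if and only if they hold over $\mathbb{F}$, and then applies Proposition~\ref{mpasi} verbatim over $\mathbb{F}$. It finishes by checking that $a^{\star}\mapsto a^{\star}_{\mathbb{F}}$ intertwines $\cdot_h,R^{\star}_{\circ_h},L^{\star}_{\circ_h},R^{\star}_{\cdot_h},L^{\star}_{\cdot_h}$ with their $\mathbb{F}$-linear duals. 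The injective map $A_h\oplus A_h^{\star}\to A_{(h)}\oplus\operatorname{Hom}_{\mathbb{F}}(A_{(h)},\mathbb{F})$ then transports the matched-pair associativity condition.

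The base-change route buys economy and reuse. Proposition~\ref{mpasi} is used as a black box, and the same trick is recycled for Proposition~\ref{eqvbasih}. Its only extra input is that $\mathbf{K}$ is an integral domain, which is harmless here.

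Your route is self-contained and slightly more general, since it works over any commutative ring for finite free modules. Its content, however, lies in bookkeeping that you only sketch. Splitting associativity of $\odot_h$ into the eight component types gives associativity of $\circ_h$ and $\cdot_h$ and the coregular bimodule conditions, which are equivalent to those associativities. It also gives six mixed compatibility conditions: four dualize to (\ref{tib1}) and two to (\ref{tasib1}), with $\sigma_{\mathbf{K}}$ appearing as you say. Once this is written out as in \cite{Bai}, with nondegeneracy of the $\mathbf{K}$-pairing in place of the field pairing, your argument is a complete proof.
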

\begin{proof}
    Suppose that the dimension of $A$ is $n\in\mathbb{N}$ and $\{e_1,...,e_n\}$ is a basis of $A$. For $1\leq i,j,k\leq n$, let $\lambda^{ijk}_h$ be the elements of $\bf K$ such that
    $$\Delta_h(e_i)=\sum_{1\leq j,k\leq n}\lambda^{ijk}_he_j\hat{\ot} e_k.$$
Denote the quotient field of the domain $\bf K$ by $\mathbb{F}$ and set $A_{(h)}:=A_h\otimes_{\mathbf{K}}\mathbb{F}$. The $A_{(h)}$ is an $n$-dimensional vector space over the field $\mathbb{F}$. For $x_h\in A_h$, we identify it with the element $x_h\otimes_{\mathbf{K}}1\in A_{(h)}$. So $A_h$ becomes a subset of $A_{(h)}$. With this identification, $A_{(h)}=\{\sum_{i=1}^{n}\xi_ie_i|\xi_1,...,\xi_n\in\mathbb{F}\}$. Denote the $\mathbb{F}$-bilinear extension of the operation $\circ_h$ to $A_{(h)}$ by $\circ_{(h)}$. Define $\Delta_{(h)}:A_{(h)}\ot_{\mathbb{F}}A_{(h)}\to A_{(h)}$ by
    $$\Delta_{(h)}(e_i)=\sum_{1\leq j,k\leq n}\lambda^{ijk}_he_j\ot_{\mathbb{F}} e_k,\;\;1\leq i\leq n.$$
    Thus $(A_h,\circ_h,\Delta_h)$ is a topologically free ASI bialgebra if and only if $(A_{(h)},\circ_{(h)},\Delta_{(h)})$ is an ASI bialgebra over the field $\mathbb{F}$.

    Set $A_{(h)}\spcheck:=\Hom_{\mathbb{F}}(A_{(h)},\mathbb{F})$ and denote the dual operation of $\Delta_{(h)}$ relative to the field $\mathbb{F}$ by $\cdot_{(h)}$.
    Denote the linear dual map of $L_{\circ_{(h)}}(x_{(h)}),\;R_{\circ_{(h)}}(x_{(h)}),\;L_{\cdot_{(h)}}(a\spcheck),\; R_{\cdot_{(h)}}(a\spcheck)$ relative to the filed
     $\mathbb{F}$ by $L_{\circ_{(h)}}\spcheck(x_{(h)}),\;R_{\circ_{(h)}}\spcheck(x_{(h)}),\;L_{\cdot_{(h)}}\spcheck(a\spcheck),\; R_{\cdot_{(h)}}\spcheck(a\spcheck)$, where
     $x_{(h)}\in A_{(h)},\;a\spcheck\in A_{(h)}\spcheck$. By Proposition~{\ref{mpasi}}, $(A_{(h)},\circ_{(h)},\Delta_{(h)})$ is an ASI bialgebra over the field $\mathbb{F}$ if and only if
      $((A_{(h)},\circ_{(h)}),(A_{(h)}\spcheck,\cdot_{(h)}),R_{\circ_{(h)}}\spcheck,L_{\circ_{(h)}}\spcheck,R_{\cdot_{(h)}}\spcheck$, $L_{\cdot_{(h)}}\spcheck)$ is a matched pair of associative algebras over the field $\mathbb{F}$.

    For $a^{\star}\in A_h^{\star}$, we denote its $\mathbb{F}$-linear extension by $a^{\star}_{\mathbb{F}}$, that is, $a^{\star}_{\mathbb{F}}(\sum_{i=1}^{n}\xi_ie_i):=\sum_{i=1}^{n}\xi_ia^{\star}(e_i)$, where
    $\xi_1,...,\xi_n\in\mathbb{F}$. Note that for all $x_h\in A_h,\;a^{\star}\in A_h^{\star},\;1\leq i\leq
    n$, we  have
    \begin{eqnarray*}
    &&(R_{\circ_{(h)}}\spcheck(x_h)a^{\star}_{\mathbb{F}})(e_i)=a^{\star}_{\mathbb{F}}(e_i\circ_{(h)}x_h)\\
        &&=a^{\star}_{\mathbb{F}}(e_i\circ_{h}x_h)
        =\langle a^{\star},e_i\circ_{h}x_h\rangle_{\bf
        K}=(R_{\circ_{h}}^{\star}(x_h)a^{\star})(e_i).
    \end{eqnarray*}
Hence
$(R_{\circ_{h}}^{\star}(x_h)a^{\star})_{\mathbb{F}}=R_{\circ_{(h)}}\spcheck(x_h)a^{\star}_{\mathbb{F}}$.
Similarly, we obtain
\begin{align*}
    (L_{\circ_{h}}^{\star}(x_h)a^{\star})_{\mathbb{F}}&=L_{\circ_{(h)}}\spcheck(x_h)a^{\star}_{\mathbb{F}},&R_{\cdot_{h}}^{\star}(a^{\star})x_h&=R_{\cdot_{(h)}}\spcheck(a^{\star}_{\mathbb{F}})x_h,\\
    L_{\cdot_{h}}^{\star}(a^{\star})x_h&=L_{\cdot_{(h)}}\spcheck(a^{\star}_{\mathbb{F}})x_h,&(a^{\star}\cdot_h b^{\star})_{\mathbb{F}}&=a^{\star}_{\mathbb{F}}\cdot_{(h)} b^{\star}_{\mathbb{K}},\;\;\forall x_h\in A_h,\;a^{\star},b^{\star}\in A_h^{\star}.
\end{align*}
Consider the $\bf K$-linear map $$A_h\oplus A_h^{\star}\to A_{(h)}\oplus A_{(h)}\spcheck,\;\;(x_h,a^{\star})\mapsto(x_h,a^{\star}_{\mathbb{F}}).$$The image of
$$(x_h\circ_{h}y_h+R_{\cdot_{h}}^{\star}(a^{\star})y_h+L_{\cdot_{h}}^{\star}(b^{\star})x_h,R_{\circ_{h}}^{\star}(x_h)b^{\star}+L_{\circ_{h}}^{\star}(y_h)a^{\star}+a^{\star}\cdot_h b^{\star})$$ is
$$(x_h\circ_{(h)}y_h+R_{\cdot_{(h)}}\spcheck(a^{\star}_{\mathbb{F}})y_h+L_{\cdot_{(h)}}\spcheck(b^{\star}_{\mathbb{F}})x_h,R_{\circ_{(h)}}\spcheck(x_h)b^{\star}_{\mathbb{F}}+
L_{\circ_{(h)}}\spcheck(x_h)a^{\star}_{\mathbb{F}}+a^{\star}_{\mathbb{F}}\cdot_{(h)}
b^{\star}_{\mathbb{F}}),$$ where $x_h,y_h\in
A_h,\;a^{\star},b^{\star}\in A_h^{\star}$. It is clear that this
map is injective. So
$((A_h,\circ_h),(A^{\star}_h,\cdot_h),R^{\star}_{\circ_h},L^{\star}_{\circ_h}$,
$R^{\star}_{\cdot_h},L^{\star}_{\cdot_h})$ is a matched pair of
topologically free associative algebras if and only if
$((A_{(h)},\circ_{(h)})$,
$(A_{(h)}\spcheck$,$\cdot_{(h)}),R_{\circ_{(h)}}\spcheck,
L_{\circ_{(h)}}\spcheck,R_{\cdot_{(h)}}\spcheck,L_{\cdot_{(h)}}\spcheck)$
a matched pair of topologically free associative algebras over the
field $\mathbb{F}$. Then the conclusion follows.
\end{proof}

\begin{pro}\label{prodc}
   Let $A_h$ and $V_h$ be topologically free $\bf K$-modules.
   Let $\varrho_{h},l_h,r_h:A_h\to \textrm{End}_{\bf K}(V_h)$ be $\bf K$-linear maps.
   \begin{enumerate}
       \item\label{dedu} Let $\varrho:A\to {\rm End}_{\bf k}(V)$ be the linear map that satisfies $\varrho_{h}(x)\equiv\varrho(x)\pmod h$ for all $x\in A$. Then $\varrho^{\star}_{h}(x)\equiv\varrho^{\ast}(x)\pmod h$.
       \item \label{cldu} If $l_{h}\equiv r_h\pmod h$ and define a linear map $\rho:A\to {\rm End}_{\bf k}(V)$ by
       \[\rho(x)v\equiv\frac{l_h(x)v-r_h(x)v}{h}\pmod{h},\;\;\forall x\in A,v\in V,\]
then the following congruence holds:
       \[\rho^{\ast}(x)v\equiv\frac{l^{\star}_h(x)v-r^{\star}_h(x)v}{h}\pmod{h},\;\;\forall x\in A,v\in V.\]
   \end{enumerate}
\end{pro}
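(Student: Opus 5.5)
The plan is to pair both sides against an arbitrary $v\in V$ and reduce everything to coefficient comparison in $\bfK=\bfk[[h]]$. The congruences in the statement should be read for $u^{\ast}\in V^{\ast}\subset V^{\star}_h$, with $\varrho^{\ast}$ and $\rho^{\ast}$ acting on $V^{\ast}$. The basic observation is this: for $w^{\star}=\sum_s w_s^{\ast}h^s\in V_h^{\star}$, we have $w^{\star}\equiv w^{\ast}\pmod{h^k}$ if and only if $\langle w^{\star},v\rangle_{\bfK}\equiv\langle w^{\ast},v\rangle\pmod{h^k}$ for all $v\in V$. Indeed, $\langle w^{\star},v\rangle_{\bfK}=\sum_s\langle w_s^{\ast},v\rangle h^s$, so the coefficient of $h^s$ vanishes for every $v$ exactly when $w_s^{\ast}=0$. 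This lets us move congruences back and forth through the pairing.

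For (\ref{dedu}), fix $x\in A$, $u^{\ast}\in V^{\ast}$ and $v\in V$. By the defining property of $\varrho_h^{\star}$,
\[\langle\varrho_h^{\star}(x)u^{\ast},v\rangle_{\bfK}=\langle u^{\ast},\varrho_h(x)v\rangle_{\bfK}.\]
Write $\varrho_h(x)v=\varrho(x)v+\sum_{s\ge1}v_sh^s$. The pairing is $\bfK$-bilinear and $u^{\ast}$ has no higher terms, so the right-hand side is $\langle u^{\ast},\varrho(x)v\rangle+O(h)=\langle\varrho^{\ast}(x)u^{\ast},v\rangle+O(h)$. The observation above then gives $\varrho_h^{\star}(x)u^{\ast}\equiv\varrho^{\ast}(x)u^{\ast}\pmod h$. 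Since $u^{\ast}$ is arbitrary, this is the required congruence.

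For (\ref{cldu}), the hypothesis $l_h\equiv r_h\pmod h$ and the definition of $\rho$ give $l_h(x)v-r_h(x)v=h\rho(x)v+h^2(\cdots)$. Pairing with $u^{\ast}$ and using the defining property of the $\star$-maps,
\[\langle (l_h^{\star}(x)-r_h^{\star}(x))u^{\ast},v\rangle_{\bfK}=\langle u^{\ast},(l_h(x)-r_h(x))v\rangle_{\bfK}=h\langle\rho^{\ast}(x)u^{\ast},v\rangle+O(h^2).\]
The observation then says that $l_h^{\star}(x)u^{\ast}-r_h^{\star}(x)u^{\ast}$ has zero constant term and linear coefficient $\rho^{\ast}(x)u^{\ast}$. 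Because $V^{\star}_h$ is topologically free, division by $h$ is well defined, and we obtain $\rho^{\ast}(x)u^{\ast}\equiv\frac{l^{\star}_h(x)u^{\ast}-r^{\star}_h(x)u^{\ast}}{h}\pmod h$.

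The only real obstacle is making sure the pairing is nondegenerate coefficientwise, including in the infinite-dimensional case. This holds because elements of $V^{\star}_h=(V^{\ast})_h$ are genuine power series of functionals and the pairing is computed termwise. Duality of $V$ is therefore never needed, only the separation of $V^{\ast}$ by the elements of $V$, which is tautological.
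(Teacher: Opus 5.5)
Your proposal is correct and follows the paper's argument. Part (1) is exactly the paper's computation: use the defining identity of $\varrho_h^{\star}$, pair against elements of $V$, and reduce mod $h$. Your coefficientwise separation observation just makes explicit the step the paper leaves as ``Thus, the conclusion holds.'' For part (2) the paper applies (1) to $\varrho_h=\frac{l_h-r_h}{h}$, whereas you rerun the same pairing argument at order $h$; this is equivalent, and it saves you from checking that $\frac{l_h-r_h}{h}$ is a well-defined $\bfK$-linear map whose $\star$-dual is $\frac{l_h^{\star}-r_h^{\star}}{h}$.
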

\begin{proof}
    (\ref{dedu}). For all $x\in A$, $u_{h}\in V_h$, and $v^{\ast}\in V^{\ast}$, we have
    \begin{eqnarray*}
        \langle \varrho_{h}^{\star}(x)v^{\ast},u_{h}\rangle_{\bf K}
        &&=\langle v^{\ast},\varrho_{h}(x)u_{h}\rangle_{\bf K}\\
        &&\equiv\langle v^{\ast},\varrho(x)u\rangle\pmod{h}\\
        &&\equiv\langle \varrho^{\ast}(x)v^{\ast},u\rangle\pmod{h}\\
        &&\equiv\langle \varrho^{\ast}(x)v^{\ast},u_{h}\rangle_{\bf K}\pmod{h},
    \end{eqnarray*}
    where $u$ is the element in $V$ that satisfies $u_{h}\equiv u\pmod h$. Thus, the conclusion holds.

    (\ref{cldu}) follows from (\ref{dedu}) by putting $\varrho_{h}=\frac{l_h-r_h}{h}$ and $\varrho=\rho$.
\end{proof}
\begin{thm}\label{eqvdadm}
    Let $(A,\circ,\Delta)$ be a finite-dimensional ASI bialgebra and $(A_h,\circ_h,\Delta_h)$ be a topologically free ASI bialgebra. Let $\cdot$ and $\cdot_h$ be the dual operation and the topological dual operation of $\Delta$ and $\Delta_h$, respectively. Then $(A_h,\circ_h,\Delta_h)$ is an ASI bialgebra deformation of $(A,\circ,\Delta)$ if and only of
$((A_h,\circ_h),(A^{\star}_h,\cdot_h),R^{\star}_{\circ_h},L^{\star}_{\circ_h},R^{\star}_{\cdot_h},L^{\star}_{\cdot_h})$
is an associative matched pair deformation of
$((A,\circ),(A^{\ast},\cdot),R_{\circ}^{\ast},L_{\circ}^{\ast},R_{\cdot}^{\ast},L_{\cdot}^{\ast})$.
\end{thm}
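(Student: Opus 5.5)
Since $(A_h,\circ_h,\Delta_h)$ is assumed to be a topologically free ASI bialgebra, Proposition~\ref{eqvtam} gives at once that $((A_h,\circ_h),(A^{\star}_h,\cdot_h),R^{\star}_{\circ_h},L^{\star}_{\circ_h},R^{\star}_{\cdot_h},L^{\star}_{\cdot_h})$ is a matched pair of topologically free associative algebras. Likewise, Proposition~\ref{mpasi} shows that $((A,\circ),(A^{\ast},\cdot),R_{\circ}^{\ast},L_{\circ}^{\ast},R_{\cdot}^{\ast},L_{\cdot}^{\ast})$ is a matched pair of associative algebras. So both sides of the claimed equivalence concern objects of the right type. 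What remains is to compare the two ``deformation'' conditions, i.e.\ congruences modulo $h$. By definition, the left side means $x\circ_h y\equiv x\circ y$ and $\Delta_h(x)\equiv\Delta(x)\pmod h$ for $x,y\in A$. By Proposition~\ref{proedm}, the right side means that congruences (\ref{e1})--(\ref{e3}) hold for this particular matched pair.

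For the direction ``$\Rightarrow$'', I will check each of (\ref{e1})--(\ref{e3}) in turn.
\begin{enumerate}
\item The congruence $x\circ_h y\equiv x\circ y$ is given.
\item For $a^{\ast}\cdot_h b^{\ast}\equiv a^{\ast}\cdot b^{\ast}$, pair with an arbitrary $x\in A$: $\langle a^{\ast}\cdot_h b^{\ast},x\rangle_{\bf K}=\langle a^{\ast}\hat{\ot} b^{\ast},\Delta_h(x)\rangle_{\bf K}\equiv\langle a^{\ast}\ot b^{\ast},\Delta(x)\rangle=\langle a^{\ast}\cdot b^{\ast},x\rangle\pmod h$. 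Since $A$ is finite dimensional and the pairing $\langle\,,\rangle_{\bf K}$ is nondegenerate coefficientwise in $h$, an element of $A^{\star}_h$ is determined modulo $h$ by its pairings with a basis of $A$ modulo $h$.
\item The action terms follow from Proposition~\ref{prodc}(\ref{dedu}). Apply it with $\varrho_h=R_{\circ_h}$, $\varrho=R_\circ$, and similarly with $\varrho_h=L_{\circ_h}$, $\varrho=L_\circ$; this gives the congruences for $R^{\star}_{\circ_h}$ and $L^{\star}_{\circ_h}$. For $R^{\star}_{\cdot_h}$ and $L^{\star}_{\cdot_h}$, first use the congruence for $\cdot_h$ just established to get $R_{\cdot_h}(a^{\ast})\equiv R_\cdot(a^{\ast})$ on $A^{\ast}$, and then apply Proposition~\ref{prodc}(\ref{dedu}) with the roles of $A$ and $A^{\ast}$ swapped, identifying $(A^{\ast})^{\ast}=A$ by finite dimensionality.
\end{enumerate}

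For the direction ``$\Leftarrow$'', condition (\ref{e1}) directly gives both $x\circ_h y\equiv x\circ y$ and $a^{\ast}\cdot_h b^{\ast}\equiv a^{\ast}\cdot b^{\ast}$. Reversing the pairing computation from step 2 then gives $\langle a^{\ast}\ot b^{\ast},\Delta_h(x)\rangle_{\bf K}\equiv\langle a^{\ast}\ot b^{\ast},\Delta(x)\rangle\pmod h$ for all $a^{\ast},b^{\ast}\in A^{\ast}$. By nondegeneracy of the pairing on $A\ot A$ (finite dimension), this yields $\Delta_h(x)\equiv\Delta(x)\pmod h$. Hence $(A_h,\circ_h)$ and $(A_h,\Delta_h)$ are an associative deformation and a coassociative deformation, respectively, and together with the assumption that $(A_h,\circ_h,\Delta_h)$ is a topologically free ASI bialgebra, it is an ASI bialgebra deformation.

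The main obstacle is the bookkeeping of reductions modulo $h$ through the topological pairing. I need the fact that for $x_h\in A_h$ one has $x_h\equiv 0\pmod h$ if and only if $\langle a^{\ast},x_h\rangle_{\bf K}\equiv 0\pmod h$ for all $a^{\ast}\in A^{\ast}$, together with the analogous statements for $A^{\star}_h$ and $A_h\hat\ot A_h$. I would prove this by expanding in a basis $\{e_i\}$ of $A$ and its dual basis, where it reduces to a statement about the constant terms of finitely many coefficients in $\bfK$. Once this lemma is recorded, every remaining step is a one-line congruence.
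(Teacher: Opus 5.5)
Your proposal is correct and follows essentially the same route as the paper: it reduces via Proposition~\ref{proedm} to the congruences (\ref{e1})--(\ref{e3}), gets the congruences for $\cdot_h$ and $\Delta_h$ from the pairing computation (the paper's (\ref{cc})), and gets the action congruences from Proposition~\ref{prodc}~(\ref{dedu}). Your explicit appeal to Propositions~\ref{eqvtam} and~\ref{mpasi}, to ensure both sides are matched pairs, and your separate nondegeneracy lemma for detecting congruences modulo $h$ through $\langle\,,\rangle_{\bf K}$ only spell out what the paper uses tacitly.
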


\begin{proof}
    Assume that $((A_h,\circ_h),(A^{\star}_h,\cdot_h),R^{\star}_{\circ_h},L^{\star}_{\circ_h},R^{\star}_{\cdot_h},L^{\star}_{\cdot_h})$ is an associative matched pair deformation of $((A,\circ),(A^{\ast},\cdot),R_{\circ}^{\ast},L_{\circ}^{\ast},R_{\cdot}^{\ast},L_{\cdot}^{\ast})$. Thus, by Proposition~\ref{proedm}, $(A_h,\circ_h)$ and $(A^{\star}_h,\cdot_h)$ are associative deformations of $(A,\circ)$ and $(A^{\ast},\cdot)$, respectively. For all $a^{\star},b^{\star}\in A^{\star}_h,\;x\in A$,
    \begin{equation}\label{cc}
    \begin{split}
        \langle a^{\star}\hat{\otimes}b^{\star},\Delta_h(x)\rangle_{\bf K}
    &=\langle a^{\star}\cdot_h b^{\star},x\rangle_{\bf K}\\
    &\equiv
    \langle a^{\ast}\cdot b^{\ast},x\rangle\pmod h\\
    &\equiv\langle a^{\ast}\otimes b^{\ast},\Delta(x)\rangle\pmod h,
    \end{split}
    \end{equation}
    where $a^{\ast},b^{\ast}\in A^{\ast}$ satisfy $$a^{\star}\equiv a^{\ast}\pmod h,\;\;b^{\star}\equiv b^{\ast}\pmod h.$$
    Hence, $(A_h,\Delta_h)$ is a coassociative deformation of $(A,\Delta)$. Therefore, $(A_h,\circ_h,\Delta_h)$ is an ASI bialgebra deformation of $(A,\circ,\Delta)$.

    Conversely, assume that $(A_h,\circ_h,\Delta_h)$ is an ASI bialgebra deformation of $(A,\circ,\Delta)$. Then we have
    \begin{equation}\label{cimh}
        x\circ_{h}y\equiv x\circ y\pmod h,\;\;\forall x,y\in A.
    \end{equation}
    Hence we obtain the following congruences
    \[L_{\circ_h}(x)\equiv L_{\circ}(x)\pmod{h},\;\;R_{\circ_h}(x)\equiv R_{\circ}(x)\pmod{h},\;\;\forall x\in A.\]
    By Proposition~\ref{prodc}~(\ref{dedu}), we have
    \begin{equation}\label{sci}
        R^{\star}_{\circ_h}(x)a^{\ast}\equiv R_{\circ}^{\ast}(x)a^{\ast}\pmod h,\;\;L^{\star}_{\circ_h}(x)a^{\ast}\equiv
        L_{\circ}^{\ast}(x)a^{\ast}\pmod h,
        \;\;\forall x\in A,\;a^{\ast}\in A^{\ast}.
    \end{equation}
Since $(A_h,\Delta_h)$ is a coassociative deformation of $(A,\Delta)$, similar to (\ref{cc}), we show that
\begin{equation}\label{cdmh}
    a^{\ast}\cdot_h b^{\ast}\equiv a^{\ast}\cdot b^{\ast}\pmod h,\;\;\forall a^{\ast},b^{\ast}\in A^{\ast}.
\end{equation}
Similar to (\ref{sci}),  we have
\begin{equation}\label{scd}
R^{\star}_{\cdot_h}(a^{\ast})x\equiv
R_{\cdot}^{\ast}(a^{\ast})x\pmod
h,\;\;L^{\star}_{\cdot_h}(a^{\ast})x\equiv
R_{\cdot}^{\ast}(a^{\ast})x\pmod h,\;\;\forall x\in
A,\;a^{\ast}\in A^{\ast}.
\end{equation}
Finally, by (\ref{cimh}), (\ref{sci}), (\ref{cdmh}), (\ref{scd}) and Proposition~\ref{proedm}, we complete the proof.
\end{proof}

There is also a similar characterization of a finite-dimensional Poisson bialgebra.
\begin{pro}
    {\rm{(\cite{NB})}}\label{eqvmpp}
Let $A$ be a finite-dimensional vector space, $\{,\},\circ$ be binary
operations on $A$ and $\delta,\Delta:A\to A\ot A$ be linear maps.
Let $[,]$ and $\cdot$ be the dual operations of $\delta$ and
$\Delta$, respectively. Then $(A,\{,\},\circ,\delta, \Delta)$ is a
Poisson bialgebra if and only if
$((A,\{,\},\circ),(A^{\ast},[,],\cdot),-\ad_{\{,\}}^{\ast},R_{\circ}^{\ast},-\ad_{[,]}^{\ast},R_{\cdot}^{\ast})$
is a matched pair of Poisson algebras. \delete{ In this case,
$(A,\{,\},\circ,\delta, \Delta)$ is called the \textbf{Poisson
bialgebra corresponding to} the matched pair of Poisson algebras
$((A,\{,\},\circ),(A^{\ast},[,],\cdot),-\ad_{\{,\}}^{\ast},R_{\circ}^{\ast},-\ad_{[,]}^{\ast},R_{\cdot}^{\ast})$,
and
$((A,\{,\},\circ),(A^{\ast},[,],\cdot),-\ad_{\{,\}}^{\ast},R_{\circ}^{\ast},-\ad_{[,]}^{\ast},R_{\cdot}^{\ast})$
is called the \textbf{matched pair of Poisson algebras
corresponding to } the Poisson bialgebra $(A,\{,\},\circ,\delta,
\Delta)$.} 
\end{pro}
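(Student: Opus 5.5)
The statement is Proposition~\ref{eqvmpp}, cited from \cite{NB}. The plan is to reduce it to a comparison of identities, using the known characterizations of its two halves.

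\emph{Lie part.} By the classical Lie bialgebra theory, $(A,\{,\},\delta)$ is a Lie bialgebra if and only if $((A,\{,\}),(A^{\ast},[,]),-\ad_{\{,\}}^{\ast},-\ad_{[,]}^{\ast})$ is a matched pair of Lie algebras. Here the first sign convention in (\ref{ppod}) reproduces the usual Lie matched-pair bracket on $A\oplus A^\ast$.

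\emph{Associative part.} By Proposition~\ref{mpasi}, $(A,\circ,\Delta)$ is a commutative and cocommutative ASI bialgebra if and only if $((A,\circ),(A^{\ast},\cdot),R_{\circ}^{\ast},R_{\cdot}^{\ast})$ is a matched pair of commutative associative algebras.

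These two facts account for the Lie and commutative associative halves of the Poisson algebra $A\oplus A^\ast$ defined by (\ref{ppod}). Duality also gives that $(A^\ast,[,],\cdot)$ is a Poisson algebra exactly when $(A,\delta,\Delta)$ is a Poisson coalgebra. To see this, pair (\ref{pco}) with $a^\ast\ot b^\ast\ot c^\ast$; this turns it into the Leibniz rule $[a^\ast,b^\ast\cdot c^\ast]=[a^\ast,b^\ast]\cdot c^\ast+b^\ast\cdot[a^\ast,c^\ast]$ after using cocommutativity.

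What remains is the Leibniz rule for $\llbracket,\rrbracket$ and $\odot$ on $A\oplus A^\ast$. I would expand
\[\llbracket (x,a^\ast),(y,b^\ast)\odot(z,c^\ast)\rrbracket=\llbracket (x,a^\ast),(y,b^\ast)\rrbracket\odot(z,c^\ast)+(y,b^\ast)\odot\llbracket (x,a^\ast),(z,c^\ast)\rrbracket\]
by trilinearity. Its components split by type.

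\begin{itemize}
\item The $(A,A,A)$ case is the Leibniz rule in $A$.
\item The $(A^\ast,A^\ast,A^\ast)$ case is the Leibniz rule in $A^\ast$.
\item The mixed cases, with one or two arguments from $A^\ast$ and the outputs projected to $A$ and to $A^\ast$, give compatibility conditions between the representations.
\end{itemize}

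Pair each mixed condition with a test vector, e.g. with $w\in A$ or $d^\ast\in A^\ast$. The definitions $\langle -\ad^\ast_{\{,\}}(x)a^\ast,y\rangle=-\langle a^\ast,\{x,y\}\rangle$ and $\langle R_\circ^\ast(x)a^\ast,y\rangle=\langle a^\ast,y\circ x\rangle$ then convert each condition into a statement about $\delta$, $\Delta$, $\{,\}$ and $\circ$ evaluated on elements of $A$.

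The key step is to check that, modulo the identities already available (Poisson algebra, Poisson coalgebra, Lie bialgebra, commutative cocommutative ASI bialgebra), these scalar conditions are precisely (\ref{pb1}) and (\ref{pb2}). The expected outcome is the following.
\begin{itemize}
\item The mixed case with the bracket taken against an element of $A^\ast$ and the product of two elements of $A$ yields (\ref{pb1}), i.e. $\delta(x\circ y)$, after dualizing with $a^\ast\ot b^\ast$.
\item The case with $\{x,y\}$ and a product in $A^\ast$ yields (\ref{pb2}), i.e. $\Delta(\{x,y\})$.
\item The remaining mixed cases either reproduce these two or follow from them, using the symmetry of $\circ$, $\Delta$ and the antisymmetry of $\{,\}$, $\delta$.
\end{itemize}

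The main obstacle is this bookkeeping. One has to show that every one of the roughly six mixed Leibniz components is equivalent to (\ref{pb1}) or (\ref{pb2}), or is a consequence of them together with the other axioms, and to get the signs from $-\ad^\ast$ right. I would first do the component with arguments $(x,0),(0,b^\ast),(0,c^\ast)$ projected to $A^\ast$, and identify it with (\ref{pb2}). Then I would handle the one with arguments $(0,a^\ast),(y,0),(z,0)$ projected to $A^\ast$, and identify it with (\ref{pb1}). After that, I would show that the components projected to $A$ are the transposes of these via the invariant pairing: $A\oplus A^\ast$ with the standard bilinear form makes $\odot$ and $\llbracket,\rrbracket$ invariant. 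Invariance then reduces the remaining cases to the ones already checked.

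Combining the three equivalences gives both directions of the proposition.
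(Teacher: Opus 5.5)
Your decomposition is the right one, and it is essentially the argument of \cite{NB}; the paper gives no proof of its own, only the citation. The four pieces match as follows:
\begin{itemize}
\item the Jacobi identity on $A\oplus A^{\ast}$ is the Lie bialgebra condition;
\item the commutative associative part is Proposition~\ref{mpasi};
\item the two pure Leibniz rules are the Poisson axioms of $A$ and, via your dualization of (\ref{pco}), of $A^{\ast}$;
\item the mixed Leibniz components must be matched with (\ref{pb1}) and (\ref{pb2}).
\end{itemize}
Your invariance shortcut is also legitimate. Put $J(X,Y,Z):=\llbracket X,Y\odot Z\rrbracket-\llbracket X,Y\rrbracket\odot Z-Y\odot\llbracket X,Z\rrbracket$. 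Invariance and symmetry of $\mathfrak{B}$, antisymmetry of $\llbracket,\rrbracket$ and commutativity of $\odot$ all hold by construction, and together they give $\mathfrak{B}(J(X,Y,Z),W)=\mathfrak{B}(Y,J(X,Z,W))$. So the $A$-projection of $J(X,y,Z)$ with $y\in A$ is the transpose of the $A^{\ast}$-projection of $J(X,Z,w^{\ast})$.

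The gap is in the concrete bookkeeping. Your first identification is right, but the second is not, and as planned (\ref{pb1}) is never produced.

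The $A^{\ast}$-projection of $J(a^{\ast},y,z)$ is $\ad^{\ast}_{\{,\}}(y\circ z)a^{\ast}=R^{\ast}_{\circ}(z)\ad^{\ast}_{\{,\}}(y)a^{\ast}+R^{\ast}_{\circ}(y)\ad^{\ast}_{\{,\}}(z)a^{\ast}$. Paired with $w\in A$ it reads $\{y\circ z,w\}=\{y,z\circ w\}+\{z,y\circ w\}$. This holds in every Poisson algebra and contains neither $\delta$ nor $\Delta$. By the identity above, its transposes are again $A^{\ast}$-projections of the same type, so your final invariance step cannot recover (\ref{pb1}) from it either.

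(\ref{pb1}) actually lives in the $A$-projection of $J(a^{\ast},x,y)$. Pairing with $b^{\ast}$ gives
\[\delta(x\circ y)=(\id_A\ot L_\circ(y))\delta(x)+(\id_A\ot L_\circ(x))\delta(y)-(\ad_{\{,\}}(x)\ot\id_A)\Delta(y)-(\ad_{\{,\}}(y)\ot\id_A)\Delta(x),\]
which is $\sigma$ applied to (\ref{pb1}), using $\sigma\delta=-\delta$ and $\sigma\Delta=\Delta$. Its transpose is the $A^{\ast}$-projection of $J(a^{\ast},y,c^{\ast})$, a case missing from your list.

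After the $Y\leftrightarrow Z$ symmetry there are eight mixed component equations, and they sort as follows.
\begin{itemize}
\item Four are automatic: the $A^{\ast}$-projections of $J(a^{\ast},y,z)$ and $J(x,y,c^{\ast})$, and the $A$-projections of $J(x,b^{\ast},c^{\ast})$ and $J(a^{\ast},b^{\ast},z)$. They are exactly the axioms making $(-\ad^{\ast}_{\{,\}},R^{\ast}_{\circ},A^{\ast})$ and $(-\ad^{\ast}_{[,]},R^{\ast}_{\cdot},A)$ representations of the Poisson algebras $A$ and $A^{\ast}$.
\item Two are (\ref{pb2}): the $A^{\ast}$-projection of $J(x,b^{\ast},c^{\ast})$ and its transpose, the $A$-projection of $J(x,y,c^{\ast})$.
\item Two are (\ref{pb1}): the $A$-projection of $J(a^{\ast},x,y)$ and the $A^{\ast}$-projection of $J(a^{\ast},y,c^{\ast})$ described above.
\end{itemize}
With this correction your argument goes through.

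One warning for the (\ref{pb2}) match. The identity you will actually obtain is
\[\Delta(\{x,y\})=(\ad_{\{,\}}(x)\ot\id_A+\id_A\ot\ad_{\{,\}}(x))\Delta(y)+(L_\circ(y)\ot\id_A-\id_A\ot L_\circ(y))\delta(x).\]
This is what (\ref{g5}) yields after exchanging $x$ and $y$. The displayed (\ref{pb2}) has $x$ and $y$ interchanged in its last term, so do not expect a literal match.
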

\begin{thm}
    \label{qcldadm}
    Let $A$ be a finite-dimensional vector space, $\circ$ be a binary operation on $A$, $\Delta:A\to A\ot A$ be a linear map and $\cdot$ be the dual operation of $\Delta$. Let $\circ_h$ be a $\bf K$-bilinear operation on $A_h$, $\Delta_h:A_h\to A_h\hat{\ot} A_h$ be a $\bf K$-linear map and $\cdot_h$ be its topological dual operation.
    \begin{enumerate}
        \item\label{qcldadm1} Assume that $(A,\circ,\Delta)$ is a commutative and cocommutative ASI bialgebra, $(A_{h},\circ_{h},\Delta_{h})$ is an ASI bialgebra deformation and $(A,\{,\},\circ,\delta,\Delta)$ is the QCL. Let $[,]$ denote the dual operation of $\delta$. Then $((A_h,\circ_h),(A^{\star}_h,\cdot_h),R^{\star}_{\circ_h},L^{\star}_{\circ_h},R^{\star}_{\cdot_h},L^{\star}_{\cdot_h})$, which is the matched pair of topologically free associative algebras corresponding to $(A_{h},\circ_{h},\Delta_{h})$, is an associative matched pair deformation of  $((A,\circ),(A^{\ast},\cdot),R_{\circ}^{\ast},R_{\cdot}^{\ast})$, which is the matched pair of commutative associative algebras corresponding to $(A,\circ,\Delta)$. Moreover, $((A,\{,\},\circ),(A^{\ast},[,],\cdot),-\ad_{\{,\}}^{\ast},R_{\circ}^{\ast},-\ad_{[,]}^{\ast},R_{\cdot}^{\ast})$, which is the matched pair of Poisson algebras corresponding to $(A,\{,\},\circ,\delta,\Delta)$ coincides with the QCL of $((A_h,\circ_h),(A^{\star}_h,\cdot_h),R^{\star}_{\circ_h},L^{\star}_{\circ_h},R^{\star}_{\cdot_h},L^{\star}_{\cdot_h})$.
         \item\label{qcldadm2} Assume that $((A,\circ),(A^{\ast},\cdot),R_{\circ}^{\ast},R_{\cdot}^{\ast})$ is a matched pair of commutative associative algebras, $((A_h,\circ_h),(A^{\star}_h,\cdot_h),R^{\star}_{\circ_h},L^{\star}_{\circ_h},R^{\star}_{\cdot_h},L^{\star}_{\cdot_h})$ is an associative matched pair deformation and the QCL is $(A,\{,\},\circ,A^{\ast},[,],\cdot,\rho_{\{,\}},R_{\circ}^{\ast},\rho_{[,]},R_{\cdot}^{\ast})$. Then $(A_{h},\circ_{h},\Delta_{h})$, which is the topologically free ASI bialgebra corresponding to $((A_h,\circ_h),(A^{\star}_h,\cdot_h),R^{\star}_{\circ_h},L^{\star}_{\circ_h},R^{\star}_{\cdot_h},L^{\star}_{\cdot_h})$, is an ASI bialgebra deformation of $(A,\circ,\Delta)$, which is the commutative and cocommutative ASI bialgebra corresponding to $((A,\circ),(A^{\ast},\cdot),R_{\circ}^{\ast},R_{\cdot}^{\ast})$. Moreover, the Poisson bialgebra corresponding to the matched pair of Poisson algebras $((A,\{,\},\circ),(A^{\ast},[,],\cdot),-\ad_{\{,\}}^{\ast},R_{\circ}^{\ast},-\ad_{[,]}^{\ast},R_{\cdot}^{\ast})$ coincides with the QCL of $(A_{h},\circ_{h},\Delta_{h})$.
         \end{enumerate}
         That is, we have the following commutative
         diagram:
         \begin{equation}
         \begin{gathered}
\xymatrixcolsep{5pc}\xymatrixrowsep{5pc}
\xymatrix{
    *\txt{commutative and\\
        cocommutative\\ ASI bialgebra\\$(A,\circ,\Delta)$}\ar@<-3pt>[d]|-{\txt{\small Proposition~{\rm\ref{mpasi}}}}\ar[r]^{\txt{\small deformation}}&*\txt{topologically free\\ASI bialgebra\\$(A_{h},\circ_{h},\Delta_{h})$}\ar[r]^{\txt{\small QCL}}_{\txt{\small Theorem~{\rm\ref{thm-limasi}}}}\ar@<-3pt>[d]|-{\txt{\small Proposition~{\rm\ref{eqvtam}}}}&*\txt{Poisson bialgebra\\$(A,\{,\},\circ,\delta,\Delta)$}\ar@<-3pt>[d]|-{\txt{\small Proposition~{\rm\ref{eqvmpp}}}}\\
    *\txt{matched pair\\of commutative\\associative algebras\\$((A,\circ),(A^{\ast},\cdot),R_{\circ}^{\ast},R_{\cdot}^{\ast})$}\ar[r]^{\txt{\small deformation}}\ar@<-3pt>[u]|-{\phantom{\txt{\small mt~{\rm\ref{mpasi}}}}}
    & *\txt{matched pair\\of topologically free\\associative algebras\\$((A_h,\circ_h),(A^{\star}_h,\cdot_h),$\\$R^{\star}_{\circ_h},L^{\star}_{\circ_h},R^{\star}_{\cdot_h},L^{\star}_{\cdot_h})$}\ar@<-3pt>[u]|-{\phantom{\txt{\small Proposition~\ref{eqvtam}}}}\ar[r]^{\txt{\small QCL}}_{\txt{\small Theorem~{\rm\ref{qclpipei}}}}
    & *\txt{matched pair\\of Poisson algebras\\$((A,\{,\},\circ),(A^{\ast},[,],\cdot),$\\$-\ad_{\{,\}}^{\ast},R_{\circ}^{\ast},-\ad_{[,]}^{\ast},R_{\cdot}^{\ast})$.}\ar@<-3pt>[u]|-{\phantom{\txt{\small Proposition~\ref{eqvmpp}}}}
}\label{diag1}
 \end{gathered}
\end{equation}
\end{thm}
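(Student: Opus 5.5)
The plan is to reduce both parts to the deformation statements already proved, so that the only new work is comparing the quasiclassical data: the bracket, the cobracket, and the two "difference" representations.

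For part (\ref{qcldadm1}), first apply Theorem~\ref{eqvdadm}. It says that $((A_h,\circ_h),(A^{\star}_h,\cdot_h),R^{\star}_{\circ_h},L^{\star}_{\circ_h},R^{\star}_{\cdot_h},L^{\star}_{\cdot_h})$ is an associative matched pair deformation of the matched pair corresponding to $(A,\circ,\Delta)$. By Proposition~\ref{mpasi}, the latter is the commutative one $((A,\circ),(A^*,\cdot),R_\circ^*,R_\cdot^*)$, since $L_\circ=R_\circ$ and $L_\cdot=R_\cdot$.

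It remains to identify the QCL, which I will do in three steps.

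\emph{Bracket on $A$.} The bracket on $A$ is $\{,\}$ by definition, since it is the QCL of $(A_h,\circ_h)$.

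\emph{Bracket on $A^*$.} Let $[,]'$ denote the QCL bracket of $(A^\star_h,\cdot_h)$. For $a^*,b^*\in A^*$ and $x\in A$, I pair with $x$ and compute
\[
\langle a^*\cdot_h b^*-b^*\cdot_h a^*,x\rangle_{\bf K}=\langle a^*\hat\ot b^*,\Delta_h(x)-\sigma_{\bf K}\Delta_h(x)\rangle_{\bf K}.
\]
Dividing by $h$ and reducing mod $h$ gives $\langle a^*\ot b^*,\delta(x)\rangle=\langle [a^*,b^*],x\rangle$. Hence $[,]'=[,]$.

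\emph{The two representations.} Apply Proposition~\ref{prodc}~(\ref{cldu}) with $l_h=R_{\circ_h}$ and $r_h=L_{\circ_h}$; these agree mod $h$ by commutativity. The induced $\rho$ is
\[
\rho(x)y\equiv\frac{y\circ_h x-x\circ_h y}{h}=-\{x,y\}\pmod h,
\]
so $\rho=-\ad_{\{,\}}$. Note that $\rho$ is being taken on the $A$-side and then dualized. Concretely, the QCL representation is
\[
\rho_{\{,\}}(x)a^*\equiv \frac{R^\star_{\circ_h}(x)a^*-L^\star_{\circ_h}(x)a^*}{h}\pmod h ,
\]
and Proposition~\ref{prodc} identifies this with $(-\ad_{\{,\}})^*(x)a^*=-\ad^*_{\{,\}}(x)a^*$. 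The same argument, using $R^\star_{\cdot_h},L^\star_{\cdot_h}$ and the bracket $[,]$, gives $\rho_{[,]}=-\ad^*_{[,]}$.

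This shows that the QCL matched pair is exactly $((A,\{,\},\circ),(A^*,[,],\cdot),-\ad_{\{,\}}^*,R_\circ^*,-\ad_{[,]}^*,R_\cdot^*)$. By Theorem~\ref{thm-limasi} and Proposition~\ref{eqvmpp}, this is the matched pair of Poisson algebras corresponding to the Poisson bialgebra $(A,\{,\},\circ,\delta,\Delta)$.

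For part (\ref{qcldadm2}), first use Proposition~\ref{eqvtam} to obtain the topologically free ASI bialgebra $(A_h,\circ_h,\Delta_h)$. Then the converse direction of Theorem~\ref{eqvdadm} shows it is an ASI bialgebra deformation of $(A,\circ,\Delta)$. By Proposition~\ref{mpasi}, $(A,\circ,\Delta)$ is commutative and cocommutative, so its QCL $(A,\{,\}',\circ,\delta',\Delta)$ exists. Applying part (\ref{qcldadm1}) to it, the QCL of the matched pair deformation is the matched pair corresponding to $(A,\{,\}',\circ,\delta',\Delta)$.

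Since the QCL is uniquely determined, I can compare components with the given QCL $(\ldots,\rho_{\{,\}},\ldots,\rho_{[,]},\ldots)$. This forces $\{,\}'=\{,\}$. The dual of $\delta'$ is $[,]$, so $\delta'=\delta$, where $\delta$ is the map dual to $[,]$ by finite-dimensionality. It also gives $\rho_{\{,\}}=-\ad^*_{\{,\}}$ and $\rho_{[,]}=-\ad^*_{[,]}$. Therefore the Poisson bialgebra corresponding to the Poisson matched pair, via Proposition~\ref{eqvmpp}, is exactly the QCL of $(A_h,\circ_h,\Delta_h)$.

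The main obstacle is the careful sign and side bookkeeping in applying Proposition~\ref{prodc}~(\ref{cldu}). One has to check that $R^\star-L^\star$ is the dual of $R-L$, not of $L-R$, so that the sign $-\ad^*$ comes out right. One also has to justify passing between elements of $A^*$ and of $A_h^\star$ modulo $h$; the pairing is $\bf K$-bilinear, so this reduces to comparing lowest-order coefficients.
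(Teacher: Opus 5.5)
Your proposal is correct and follows the paper's proof. Part (1) uses the same ingredients: Theorem~\ref{eqvdadm}, the pairing computation showing that the QCL bracket of $(A^{\star}_h,\cdot_h)$ is the dual of $\delta$, and Proposition~\ref{prodc}~(\ref{cldu}) for the two representations; your signs $\rho_{\{,\}}=-\ad^{\ast}_{\{,\}}$ and $\rho_{[,]}=-\ad^{\ast}_{[,]}$ agree with the paper's congruences. For part (2) the paper only says ``similar'', and your reduction to part (1) via Proposition~\ref{eqvtam}, Theorem~\ref{eqvdadm} and uniqueness of the QCL is a valid way to fill in those details.
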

\begin{proof}
        (\ref{qcldadm1}). The first conclusion follows from Theorem~{\ref{eqvdadm}}.
    Since the QCL of $(A_h,\circ_h)$ is $(A,\{,\},\circ)$, we have
    \[\ad_{\{,\}}(x)y\equiv\frac{L_{\circ_h}(x)y-R_{\circ_h}(x)y}{h}\pmod{h},\;\;\forall x,y\in A.\]
    By Proposition~\ref{prodc}~(\ref{cldu}),
      \begin{equation}\label{qclads1}
         \ad^{\ast}_{\{,\}}(x)a^{\ast}\equiv\frac{L^{\star}_{\circ_h}(x)a^{\ast}-R^{\star}_{\circ_h}(x)a^{\ast}}{h}\pmod{h},\;\;\forall x\in A,a^{\ast}\in  A^{\ast}.
      \end{equation}
    On the other hand, for all $a^{\ast},b^{\ast}\in A^{\ast},\;x_h\in A_h$,
    \begin{equation}\label{qclads2}
        \begin{split}
            \langle a^{\ast}\cdot_h b^{\ast}- b^{\ast}\cdot_h a^{\ast},x_h\rangle_{\bf K}
        &=\langle a^{\ast}\hat{\ot}b^{\ast}- b^{\ast}\hat{\ot}a^{\ast},\Delta_h(x_h)\rangle_{\bf K}\\
            &=\langle a^{\ast}\hat{\ot}b^{\ast},\Delta_h(x_h)-\sigma_{\bf K}(\Delta_h(x_h))\rangle_{\bf K}\\
        &\equiv\langle a^{\ast}\hat{\ot}b^{\ast},\Delta_h(x)-\sigma_{\bf K}(\Delta_h(x))\rangle_{\bf K}\pmod{h^2}\\
    &\equiv\langle a^{\ast}\ot b^{\ast},\delta(x)\rangle~h\pmod{h^2}\\
        &\equiv\langle [a^{\ast},b^{\ast}],x\rangle~h\pmod{h^2}\\
            &\equiv\langle [a^{\ast},b^{\ast}]~h,x_h\rangle_{\bf K}\pmod{h^2},
        \end{split}
    \end{equation}
where $x\in A$ satisfies $x_h\equiv x\pmod h$. Thus, the QCL of the associative deformation $(A^{\star}_h,\cdot_h)$ of the commutative algebra $(A^{\ast},\cdot)$ is exactly $(A^{\ast},[,],\cdot)$. So by Proposition~\ref{prodc}~(\ref{cldu}) again, we deduce that
 \begin{equation}\label{qclads3}
\ad^{\ast}_{[,]}(a^{\ast})x\equiv\frac{L^{\star}_{\cdot_h}(a^{\ast})x-R^{\star}_{\cdot_h}(a^{\ast})x}{h}\pmod{h},\;\;\forall x\in A,a^{\ast}\in A^{\ast},
\end{equation}
completing the proof of (\ref{qcldadm1}).

The proof of (\ref{qcldadm2}) is similar to the proof of (\ref{qcldadm1}).
\end{proof}
\begin{rmk}
    As an application of~{\rm \ref{qcldadm}}, we give a simple proof of Theorem~{\rm \ref{thm-limasi}}~in the case that the commutative and cocommutative ASI bialgebra $(A,\circ,\Delta)$ is finite-dimensional. Let $(A_{h},\circ_{h},\Delta_{h})$ be an ASI bialgebra deformation of $(A,\circ,\Delta)$ and $(A,\{,\},\circ,\delta, \Delta)$ be the QCL. Denote by $[,]$ and $\cdot$ the dual operations of $\delta$ and $\Delta$, respectively. Let $\cdot_h$ be the topological dual operation of $\Delta_h$.
    Then by Theorem~{\rm \ref{qcldadm}~(\ref{qcldadm1})}, $((A_h,\circ_h),(A^{\star}_h,\cdot_h),R^{\star}_{\circ_h},L^{\star}_{\circ_h},R^{\star}_{\cdot_h},L^{\star}_{\cdot_h})$ is an associative matched pair
     deformation of $((A,\circ),(A^{\ast},\cdot),R_{\circ}^{\ast},R_{\cdot}^{\ast})$. By Theorem~{\rm \ref{qcldadm}~(\ref{qcldadm2})}, the QCL is $((A,\{,\},\circ),(A^{\ast},[,],\cdot),-\ad_{\{,\}}^{\ast},
     R_{\circ}^{\ast}$, $-\ad_{[,]}^{\ast}$, $R_{\cdot}^{\ast})$, which is a matched pair of Poisson algebras. Thus, by Proposition~\ref{eqvmpp}, $(A,\{,\},\circ,\delta,\Delta)$
     is a Poisson bialgebra.
\end{rmk}

\subsection{Associative matched pair deformations via derivations}\label{sec3.3}

We extend Lemma~\ref{Dda}~to associative matched pair
deformations.
\begin{pro}\label{dmpa}
    Let $((A,\circ), (B, \cdot), l_\circ,r_\circ, l_\cdot, r_\cdot)$ be a matched pair of associative algebras and $D_1,D_2$ be commuting derivations on the
    associative algebra $(A\oplus B,\odot):=A \bowtie_{l_\cdot, r_\cdot}^{l_\circ,r_\circ} B$ such that $D_i(A)\subset A,D_i(B)\subset B$ where $i\in\{1,2\}$. Let $(A_{h},\circ_{h})$ and
     $(B_{h},\cdot_{h})$ be the associative deformations induced by $(A,\circ,D_1,D_2)$ and $(B,\cdot,D_1,D_2)$, respectively. For all $x\in A, v\in B$, define
    \begin{align}
    l_{\circ_h}(x)v &:= \sum_{s=0}^{\infty}l_{\circ}(D_1^{s}(x))D_2^{s}(v)\frac{h^{s}}{s!},
    &r_{\circ_h}(x)v &:= \sum_{s=0}^{\infty}r_{\circ}(D_2^{s}(x))D_1^{s}(v)\frac{h^{s}}{s!},\label{dod}\\
    l_{\cdot_h}(v)x &:= \sum_{s=0}^{\infty}l_{\cdot}(D_1^{s}(v))D_2^{s}(x)\frac{h^{s}}{s!},
    &r_{\cdot_h}(v)x &:= \sum_{s=0}^{\infty}r_{\cdot}(D_2^{s}(v))D_1^{s}(x)\frac{h^{s}}{s!}\label{dcd}.
    \end{align}
    Extend $\mathbf{K}$-linearly the above maps to $A_{h}$ and $B_h$. Then $((A_{h},\circ_{h}),(B_{h},\cdot_{h}),l_{\circ_h},r_{\circ_h},l_{\cdot_h},r_{\cdot_h})$ is
     an associative matched pair deformation of $((A,\circ), (B, \cdot), l_\circ,r_\circ, l_\cdot, r_\cdot)$, called the \textbf{associative matched pair deformation induced by} $((A,\circ),(B, \cdot), l_\circ,r_\circ, l_\cdot, r_\cdot,D_1,D_2)$. Moreover, the associative deformation induced by $(A \bowtie_{l_\cdot, r_\cdot}^{l_\circ,r_\circ} B,D_1,D_2)$ is exactly $A_{h}\bowtie_{l_{\cdot_h},r_{\cdot_h}}^{l_{\circ_h},r_{\circ_h}} B_{h}$.
\end{pro}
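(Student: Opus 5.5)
The plan is to reduce everything to Lemma~\ref{Dda} applied to the associative algebra $(A\oplus B,\odot)$. Since $D_1,D_2$ are commuting derivations on $(A\oplus B,\odot)$, Lemma~\ref{Dda} gives an associative deformation $((A\oplus B)_h,\odot_h)$ with
\[(x,u)\odot_h(y,v)=\sum_{s=0}^{\infty}D_1^s(x,u)\odot D_2^s(y,v)\frac{h^s}{s!}.\]
Under the identification $\imath$ of Remark~\ref{ids}, I will show that $\odot_h$ is exactly the operation (\ref{odotpipei}) built from $\circ_h,\cdot_h,l_{\circ_h},r_{\circ_h},l_{\cdot_h},r_{\cdot_h}$. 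This one identity yields both the matched pair property and the final ``moreover'' claim.

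First I check that the restrictions make sense. Because $D_i(A)\subset A$ and $D_i(B)\subset B$, the restrictions $D_i|_A$ are commuting derivations of $(A,\circ)$. Indeed, for $x,y\in A$ we have $D_i((x,0)\odot(y,0))=D_i(x\circ y,0)$, and expanding with the derivation rule gives $D_i(x)\circ y+x\circ D_i(y)$. Similarly, $D_i|_B$ are commuting derivations of $(B,\cdot)$. So $(A_h,\circ_h)$ and $(B_h,\cdot_h)$ are well defined by Lemma~\ref{Dda}.

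Next I compute the general term of $\odot_h$. Write $D_1^s(x,u)=(D_1^s x,D_1^s u)$ and $D_2^s(y,v)=(D_2^s y,D_2^s v)$, and expand by (\ref{odotpipei}). The $A$-component is
\[D_1^sx\circ D_2^sy+l_\cdot(D_1^su)D_2^sy+r_\cdot(D_2^sv)D_1^sx.\]
The $B$-component is
\[l_\circ(D_1^sx)D_2^sv+r_\circ(D_2^sy)D_1^su+D_1^su\cdot D_2^sv.\]
Multiplying by $h^s/s!$ and summing over $s$, these terms match (\ref{UDF}) for $\circ_h$ and $\cdot_h$ and the definitions (\ref{dod}) and (\ref{dcd}) term by term. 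Note that the $r$-maps take $D_2^s$ on the acting element and $D_1^s$ on the acted-upon element, which is exactly how (\ref{dod}) and (\ref{dcd}) are written. Hence, for $x,y\in A$ and $u,v\in B$, the element $(x,u)\odot_h(y,v)$ equals the product (\ref{odotpipei}) formed with the $h$-operations. Both sides are $\mathbf K$-bilinear and $h$-adically continuous, so the equality extends to all of $A_h\oplus B_h$.

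Finally I draw the conclusions. Since $\odot_h$ is associative by Lemma~\ref{Dda}, $((A_h,\circ_h),(B_h,\cdot_h),l_{\circ_h},r_{\circ_h},l_{\cdot_h},r_{\cdot_h})$ is a matched pair of topologically free associative algebras. Its $s=0$ terms give the congruences (\ref{e1})--(\ref{e3}), so by Proposition~\ref{proedm} it is an associative matched pair deformation. The identity $\odot_h=$ the bowtie product is precisely the last assertion of the statement.

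The main obstacle I expect is the $\mathbf K$-linear extension step. One has to check that the $\mathbf K$-bilinear extension of $\odot_h$ defined on $A\oplus B$ agrees with the bowtie product of the $\mathbf K$-linear extensions. I will handle this by arguing coefficientwise in $h$ via $\imath$; everything else is bookkeeping.
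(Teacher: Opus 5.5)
Your proposal is correct and takes essentially the same route as the paper. Both arguments apply Lemma~\ref{Dda} to $(A\oplus B,\odot,D_1,D_2)$ and expand $D_1^s(x,u)\odot D_2^s(y,v)$ componentwise to identify the induced product with the bowtie product of $\circ_h,\cdot_h,l_{\circ_h},r_{\circ_h},l_{\cdot_h},r_{\cdot_h}$, then read off the congruences from the $s=0$ terms via Proposition~\ref{proedm}; your explicit check that $D_i|_A$ and $D_i|_B$ are derivations of $(A,\circ)$ and $(B,\cdot)$ fills in a step the paper leaves implicit.
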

\begin{proof}
    Let  $(A_{h}\oplus B_{h},\odot_{h}^{\prime})$ be the associative deformation induced by $(A\oplus B,\odot,D_1,D_2)$.
    Thus, $(A_{h}\oplus B_{h},\odot_{h}^{\prime})$ is a topologically free associative algebra.
    Note that for all $x,y\in A$, $u,v\in B$,
     \begin{eqnarray*}
        (x,u)\odot_{h}^{\prime}(y,v)
        & =&
        \sum_{s=0}^{\infty}(D_1^{s}((x,u))\odot D_2^{s}((y,v)))\frac{h^{s}}{s!}\\
        &=& \sum_{s=0}^{\infty}((D_1^s(x),D_1^s(u))\odot(D_2^s(y),D_2^s(v)))\frac{h^{s}}{s!}\\
        & =&\sum_{s=0}^{\infty}(D_1^{s}(x)\circ D_2^{s}(y)+l_{\cdot}(D_1^{s}(u))D_2^{s}(y)+r_{\cdot}(D_2^{s}(v))D_1^{s}(x) ,0)\frac{h^{s}}{s!}\\
        &&+\sum_{s=0}^{\infty}(0,l_{\circ}(D_1^{s}(x))D_2^{s}(v)+r_{\circ}(D_2^{s}(y))D_1^{s}(u)+D_1^{s}(u)\circ D_2^{s}(v))\frac{h^{s}}{s!} \\
            &=&(x\circ_{h} y+l_{\cdot_h}(u)y+r_{\cdot_h}(v)x,l_{\circ_h}(x)v+r_{\circ_h}(y)u+u\cdot_{h} v).
    \end{eqnarray*}
So
$((A_{h},\circ_{h}),(B_{h},\cdot_{h}),l_{\circ_h},r_{\circ_h},l_{\cdot_h},r_{\cdot_h})$
is a matched pair of topologically free associative algebras. Note
that (\ref{e1}) and (\ref{e2}) hold, then the first conclusion
holds. The last conclusion also follows from the above study.
\end{proof}

Let $V_1$, $V_2$, $W_1$ and $W_2$ be vector spaces. Let $f:V_1\to
W_1$, $g:V_2\to W_2$ be linear maps. Define a linear map
$f\dotplus g:V_1\oplus V_2\to W_1\oplus W_2$ by
\[f\dotplus g((v_1,v_2)):=(f(v_1),g(v_2)),\;\;\forall v_1\in V_1, v_2\in V_2.\]
\delete{
\begin{lem}{\rm{(\cite{Lin})}}\label{led}
    Let $((A,\circ), (B, \cdot), l_\circ,r_\circ, l_\cdot, r_\cdot)$ be a matched pair of associative algebras, $d$ and $d^\prime$ be derivations on associative algebras $(A,\circ)$ and $(B,\cdot)$,respectively. Denote $d\dotplus d^\prime$ by $D$. Then $D$ is a derivation on the associative algebra $A \bowtie_{l_\cdot, r_\cdot}^{l_\circ,r_\circ} B$ if and only if for all $x\in A$, $v\in B$,
    \begin{align}
        d^{\prime}(l_{\circ}(x)v)&=l_{\circ}(d(x))v+l_{\circ}(x)d^{\prime}(v),\;\;&d^{\prime}(r_{\circ}(x)v)&=r_{\circ}(d(x))v+r_{\circ}(x)d^{\prime}(v),\\
            d(l_{\cdot}(v)x)&=l_{\cdot}(d^{\prime}(v))x+l_{\cdot}(v)d^\prime(x),\;\;    &d(r_{\cdot}(v)x)&=r_{\cdot}(d^{\prime}(v))x+r_{\cdot}(v)d^\prime(x).
    \end{align}
\end{lem}}
We give a proof of the following result using the method of
deformations-QCLs.
\begin{cor}{\rm{(\cite{Lin})}}\label{qclddmp}
    Let $((A,\circ),(B, \cdot),\rho_{\circ},\rho_{\cdot})$ be a matched pair of commutative associative algebras, $d_1,d_2$ and $d_1^\prime,d_2^\prime$ be commuting derivations on $(A,\circ)$ and $(B,\cdot)$, respectively. Let $(A,\{,\},\circ)$ and $(B, [,],\cdot)$ denote the Poisson algebras induced by $(A,\circ,d_1,d_2)$ and $(B,\cdot,d_1^\prime,d_2^\prime)$, respectively.
    For all $x\in A$, $v\in B$, define linear maps $\rho_{\{,\}}:A\to\End_{\bf k}(B)$, $\rho_{[,]}:B\to\End_{\bf k}(A)$ by
    \begin{eqnarray}
    \rho_{\{,\}}(x)v&:=&\rho_{\circ}(d_1(x))d_{2}^{\prime}(v)-\rho_{\circ}(d_2(x))d_{1}^{\prime}(v),\label{qcldo}\\
    \rho_{[,]}(v)x&:=&\rho_{\cdot}(d_{1}^{\prime}(v))d_2(x)-\rho_{\cdot}(d_{2}^{\prime}(v))d_1(x).
    \end{eqnarray}
     If $d_i\dotplus d_i^{\prime}$ is a derivation on $A \bowtie_{\rho_\cdot}^{\rho_\circ} B$ for all $i\in\{1,2\}$, then $((A,\{,\},\circ),(B,[,] ,\cdot),\rho_{\{,\}},\rho_{\circ},\rho_{[,]},\rho_{\cdot})$ is a matched pair of Poisson algebras, called the \textbf{matched pair of Poisson algebras induced by} $(A,\circ,d_1,d_2, B,\cdot,d_1^\prime,d_2^\prime, \rho_{\circ},\rho_{\cdot})$ .
    Moreover, the Poisson algebra $A\bowtie_{\rho_{[,]},\rho_{\cdot}}^{\rho_{\{,\}},\rho_{\circ}} B$ is exactly the Poisson algebra induced by $(A \bowtie_{\rho_\cdot}^{\rho_\circ} B,D_1,D_2)$, where $D_i:=d_i\dotplus d_i^\prime$, $i=1,2$.
\end{cor}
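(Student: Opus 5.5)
The plan is to run the deformation--QCL machinery through Proposition~\ref{dmpa} and Theorem~\ref{qclpipei}, so that no Poisson identity has to be checked by hand. Set $D_i:=d_i\dotplus d_i^{\prime}$ for $i=1,2$. By hypothesis each $D_i$ is a derivation on the commutative associative algebra $(A\oplus B,\odot):=A\bowtie_{\rho_\cdot}^{\rho_\circ}B$. Clearly $D_i(A)\subset A$ and $D_i(B)\subset B$. Moreover $D_1D_2=d_1d_2\dotplus d_1^{\prime}d_2^{\prime}=d_2d_1\dotplus d_2^{\prime}d_1^{\prime}=D_2D_1$, so $D_1,D_2$ commute.

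Proposition~\ref{dmpa} then applies with $l_\circ=r_\circ=\rho_\circ$ and $l_\cdot=r_\cdot=\rho_\cdot$. It yields an associative matched pair deformation $((A_h,\circ_h),(B_h,\cdot_h),l_{\circ_h},r_{\circ_h},l_{\cdot_h},r_{\cdot_h})$ of $((A,\circ),(B,\cdot),\rho_\circ,\rho_\cdot)$. Here $(A_h,\circ_h)$ and $(B_h,\cdot_h)$ are the associative deformations induced by $(A,\circ,d_1,d_2)$ and $(B,\cdot,d_1^{\prime},d_2^{\prime})$, since $D_i$ restricts to $d_i$ on $A$ and to $d_i^{\prime}$ on $B$. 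Proposition~\ref{dmpa} also shows that $A_h\bowtie B_h$ is exactly the associative deformation induced by $(A\bowtie_{\rho_\cdot}^{\rho_\circ}B,D_1,D_2)$.

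Next I identify the QCL. By Corollary~\ref{daop}, the QCLs of $(A_h,\circ_h)$ and $(B_h,\cdot_h)$ are the induced Poisson algebras $(A,\{,\},\circ)$ and $(B,[,],\cdot)$. For the actions, expand (\ref{dod}) to first order in $h$:
\[
l_{\circ_h}(x)v-r_{\circ_h}(x)v\equiv \bigl(\rho_\circ(d_1(x))d_2^{\prime}(v)-\rho_\circ(d_2(x))d_1^{\prime}(v)\bigr)h \pmod{h^2}.
\]
The $h^0$ terms cancel because $l_\circ=r_\circ=\rho_\circ$. This gives (\ref{cmod}) with $\rho_{\{,\}}$ as in (\ref{qcldo}). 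Likewise, (\ref{dcd}) gives (\ref{cmod2}) with $\rho_{[,]}(v)x=\rho_\cdot(d_1^{\prime}(v))d_2(x)-\rho_\cdot(d_2^{\prime}(v))d_1(x)$.

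Theorem~\ref{qclpipei} then says that $((A,\{,\},\circ),(B,[,],\cdot),\rho_{\{,\}},\rho_\circ,\rho_{[,]},\rho_\cdot)$ is a matched pair of Poisson algebras. It also says that $A\bowtie_{\rho_{[,]},\rho_\cdot}^{\rho_{\{,\}},\rho_\circ}B$ is the QCL of $A_h\bowtie B_h$. Since $A_h\bowtie B_h$ is the deformation induced by $(A\bowtie B,D_1,D_2)$, Corollary~\ref{daop} identifies this QCL with the Poisson algebra induced by $(A\bowtie_{\rho_\cdot}^{\rho_\circ}B,D_1,D_2)$, which proves the last claim.

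The only real care point is bookkeeping: the factors $D_1^s$ and $D_2^s$ sit on different arguments in $l_{\circ_h}$ and $r_{\circ_h}$, so the signs in the first-order terms must be matched against (\ref{qcldo}). Everything else is a direct application of the earlier results.
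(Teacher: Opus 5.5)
Your proposal is correct and follows the paper's proof. You build the associative matched pair deformation induced by $D_i=d_i\dotplus d_i^{\prime}$ via Proposition~\ref{dmpa}, read off $\rho_{\{,\}}$ and $\rho_{[,]}$ from the first-order terms of (\ref{dod}) and (\ref{dcd}), and conclude with Theorem~\ref{qclpipei}. Your explicit checks that $D_1,D_2$ commute and preserve $A$ and $B$, and your use of Corollary~\ref{daop} with the last statement of Proposition~\ref{dmpa} for the final claim, fill in details the paper leaves implicit.
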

\begin{proof}
Denote by $(A_{h},\circ_{h})$ and $(B_{h},\cdot_{h})$ the
associative deformations induced by $(A,\circ,d_1,d_2)$ and
$(B,\cdot,d_1^\prime,d_2^\prime)$, respectively. Define $\bf
K$-linear maps $l_{\circ_h},r_{\circ_h}:
    A_{h}\rightarrow \mathrm{End}_{\mathbf{K}}(B_{h})$ and $l_{\cdot_h},r_{\cdot_h}:
    B_{h}\rightarrow \mathrm{End}_{\mathbf{K}}(A_{h})$ by~(\ref{dod})~and~(\ref{dcd}), respectively.
    Then by Proposition~\ref{dmpa}, $((A_{h},\circ_{h}),(B_{h},\cdot_{h}),l_{\circ_h},r_{\circ_h},l_{\cdot_h},r_{\cdot_h})$ is an associative matched pair deformation of $((A,\circ),(B, \cdot),\rho_{\circ},\rho_{\cdot})$. Note that
\begin{eqnarray*}
    \frac{l_{\circ_h}(x)v-r_{\circ_h}(x)v}{h}&=&\sum_{s=0}^{\infty}\frac{(\rho_{\circ}(D_1^{s}(x))D_2^{s}(v)-\rho_{\circ}(D_2^{s}(x))D_1^{s}(v))h^{s}}{s!h}\\
    &\equiv&\rho_{\circ}(d_1(x))d_{2}^{\prime}(v)-\rho_{\circ}(d_2(x))d_{1}^{\prime}(v)\pmod h, \forall x\in A, v\in B.
\end{eqnarray*}
Thus, we obtain (\ref{cmod}). Similarly, we obtain (\ref{cmod2}).
Therefore, $((A,\{,\},\circ),(B,[,]
,\cdot),\rho_{\{,\}},\rho_{\circ},\rho_{[,]},\rho_{\cdot})$ is
exactly the QCL of
$((A_{h},\circ_{h}),(B_{h},\cdot_{h}),l_{\circ_h},r_{\circ_h},l_{\cdot_h},r_{\cdot_h})$.
By Theorem~\ref{qclpipei}, we show that it is a matched pair of
Poisson algebras. The last conclusion also follows from
Theorem~\ref{qclpipei}.
\end{proof}

Recall that there is an equivalent characterization of coherent
derivations on an ASI bialgebra.
\begin{pro}
    {\rm{(\cite{Lin})}}\label{proecdd}
    Let $(A,\circ,\Delta)$ be a finite-dimensional ASI bialgebra, $d$ be a derivation on $(A,\circ)$, $\dc$ be a coderivation on $(A,\Delta)$ and  $\cdot:A\ot A\to A$ be the dual operation of $\Delta$.
    Then $(d,\dc)$ are coherent derivations on $(A,\circ,\Delta)$ if and only if $d\dotplus\dc^{\ast}$ is a derivation on $A\bowtie_{R^{\ast}_{\cdot},L^{\ast}_{\cdot}}^{R^{\ast}_{\circ},L^{\ast}_{\circ}}A^{\ast}$.
\end{pro}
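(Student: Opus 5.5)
Write $D:=d\dotplus\dc^{\ast}$ and $(A\oplus A^{\ast},\odot):=A\bowtie_{R^{\ast}_{\cdot},L^{\ast}_{\cdot}}^{R^{\ast}_{\circ},L^{\ast}_{\circ}}A^{\ast}$, with $\odot$ given by (\ref{odotpipei}). Since $D$ preserves both summands, the derivation identity $D((x,a^{\ast})\odot(y,b^{\ast}))=D(x,a^{\ast})\odot(y,b^{\ast})+(x,a^{\ast})\odot D(y,b^{\ast})$ is bilinear. So it holds for all inputs if and only if it holds separately on the four types of pairs $(x,y)$, $(a^{\ast},b^{\ast})$, $(x,b^{\ast})$ and $(a^{\ast},y)$. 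I would check these one type at a time and show that each produces exactly one of the defining conditions (or one of the standing hypotheses).

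First, on $A\times A$ the identity just says that $d$ is a derivation of $(A,\circ)$, which is assumed. On $A^{\ast}\times A^{\ast}$ it says that $\dc^{\ast}$ is a derivation of $(A^{\ast},\cdot)$. Pairing with $x\in A$ and using $\langle a^{\ast}\cdot b^{\ast},x\rangle=\langle a^{\ast}\ot b^{\ast},\Delta(x)\rangle$, this is the dual of $\Delta\dc=(\dc\ot\id_A+\id_A\ot\dc)\Delta$. That is the coderivation hypothesis, so it is also automatic.

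The substance is in the mixed terms. For $(x,b^{\ast})$, the product is $(x,0)\odot(0,b^{\ast})=(L^{\ast}_{\cdot}(b^{\ast})x,\,R^{\ast}_{\circ}(x)b^{\ast})$. The derivation identity then splits into an $A$-component and an $A^{\ast}$-component:
\begin{align*}
d(L^{\ast}_{\cdot}(b^{\ast})x)&=L^{\ast}_{\cdot}(b^{\ast})d(x)+L^{\ast}_{\cdot}(\dc^{\ast}b^{\ast})x,\\
\dc^{\ast}(R^{\ast}_{\circ}(x)b^{\ast})&=R^{\ast}_{\circ}(d(x))b^{\ast}+R^{\ast}_{\circ}(x)\dc^{\ast}b^{\ast}.
\end{align*}
I would pair the $A^{\ast}$-component with an arbitrary $y\in A$ to get $\langle b^{\ast},\dc(y)\circ x\rangle=\langle b^{\ast},y\circ d(x)+\dc(y\circ x)\rangle$. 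This is the first coherence identity $\dc(y)\circ x=y\circ d(x)+\dc(y\circ x)$. For the $A$-component I would pair with $c^{\ast}\in A^{\ast}$ and use $\langle c^{\ast},L^{\ast}_{\cdot}(b^{\ast})x\rangle=\langle b^{\ast}\cdot c^{\ast},x\rangle=\langle b^{\ast}\ot c^{\ast},\Delta(x)\rangle$. This converts the equation into an identity between $(\id\ot d)\Delta$-type and $(\dc\ot\id)\Delta$-type expressions evaluated at $x$, which should be one of the two comultiplication coherence identities. The pair $(a^{\ast},y)$ is handled the same way, now with $R^{\ast}_{\cdot}$ and $L^{\ast}_{\circ}$, and should give the remaining algebra identity and the remaining coalgebra identity.

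All four mixed equations are checked against arbitrary test vectors, so each dualization is an equivalence. Finite-dimensionality is what lets functionals separate elements of $A\ot A$. With both directions of the dualization in hand, the four mixed equations are equivalent to the four coherence conditions, which proves the statement.

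The step I expect to be the main obstacle is this bookkeeping of duals in the mixed components. One has to get the transposes right (for example, that $(\dc^{\ast})^{\ast}=\dc$ lands on the correct tensor leg) and then match each resulting equation exactly to one coherence identity, including its signs and which factor $d$ or $\dc$ acts on. The other cross terms, such as $(x,0)\odot(0,b^{\ast})$ versus $(0,b^{\ast})\odot(x,0)$, are symmetric variants. They need the same care but no new ideas.
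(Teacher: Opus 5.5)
Your proposal is correct. The paper gives no proof of its own here; the proposition is quoted from \cite{Lin}. Your componentwise dualization is the natural argument. The $A\times A$ and $A^{\ast}\times A^{\ast}$ parts reduce to the standing hypotheses that $d$ is a derivation and $\dc$ is a coderivation. The two components of each mixed case then give the four coherence identities.

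Carrying out the bookkeeping you left open:
\begin{itemize}
\item The $A^{\ast}$-component of the $(a^{\ast},y)$ case, paired with $z\in A$, gives $y\circ\dc(z)=d(y)\circ z+\dc(y\circ z)$.
\item Its $A$-component gives $(d\ot\id_A)\Delta=(\id_A\ot\dc)\Delta+\Delta d$.
\item The $A$-component of the $(x,b^{\ast})$ case gives
\[
(\id_A\ot d)\Delta=(\dc\ot\id_A)\Delta+\Delta d,
\]
as you predicted.
\end{itemize}

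This last identity does not literally match the fourth coherence condition as printed in the paper, $(\id_A\ot\dc)\Delta=(\dc\ot\id_A)\Delta+\Delta d$. The printed form is a misprint: the proof of Theorem~\ref{ddasi} applies (\ref{px4}) to $(\id_A\ot d_2^{m})\Delta$, i.e.\ it uses the version with $d$. With the printed form the equivalence would actually fail. Take $A=\bfk e$ with $e\circ e=0$, $\Delta(e)=e\ot e$, $d=\id_A$ and $\dc=0$. Then $d\dotplus\dc^{\ast}$ is a derivation of the double, but the printed identity would force $e\ot e=0$. You should therefore state explicitly that you are matching against the corrected condition.

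A minor point concerns where finite-dimensionality enters. It is needed to identify $A^{\ast\ast}$ with $A$, so that $L^{\ast}_{\cdot}(b^{\ast})$ and $R^{\ast}_{\cdot}(a^{\ast})$ act on $A$ and the matched pair is defined at all. It is not needed for separating points: elementary tensors $b^{\ast}\ot c^{\ast}$ separate points of $A\ot A$ in any dimension.
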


Let $(A,\circ,\Delta)$ be a finite-dimensional ASI bialgebra,
$(d_1,\dc_1)$ and $(d_2,\dc_2)$ be coherent derivations on $(A,
\circ, \Delta)$. Suppose that $d_1$ commutes with $d_2$ and
$\dc_1$ commutes with $\dc_2$.  Let $\cdot:A\ot A\to A$ be the
dual operation of $\Delta$. By Proposition~\ref{proecdd},
$d_1\dotplus\dc^{\ast}_1$ and $d_2\dotplus\dc^{\ast}_2$ are
derivations on
$A\bowtie_{R^{\ast}_{\cdot},L^{\ast}_{\cdot}}^{R^{\ast}_{\circ},L^{\ast}_{\circ}}A^{\ast}$.
It is clear that they commute. Then, by Proposition~\ref{dmpa},
$((A,\circ),(A^{\ast},\cdot),R_{\circ}^{\ast},L_{\circ}^{\ast},R_{\cdot}^{\ast},L_{\cdot}^{\ast},d_1\dotplus\dc^{\ast}_1,d_2\dotplus\dc^{\ast}_2)$
induces an associative matched pair deformation of
$((A,\circ),(A^{\ast},\cdot),R_{\circ}^{\ast},L_{\circ}^{\ast},R_{\cdot}^{\ast},L_{\cdot}^{\ast})$.
We denote it by
$((A_h,\circ_h),(A^{\star}_h,\cdot_h),l_{\circ_h},r_{\circ_h},l_{\cdot_h},r_{\cdot_h})$.
\begin{pro}
    \label{prodmcdasi}
    Keep the above assumptions and notations. We suppose that {\rm(\ref{ed1})}, {\rm(\ref{ed2})}, {\rm(\ref{ed3})} and {\rm(\ref{ed4})} hold. Let $(A_h, \circ_h, \Delta_h)$ be the
    ASI bialgebra deformation induced by $(A,\circ, d_1,d_2,\Delta,\dc_1,\dc_2)$. Then the matched pair of topologically free associative algebras corresponding to $(A_h, \circ_h, \Delta_h)$
    coincides with  $((A_h,\circ_h),(A^{\star}_h,\cdot_h),l_{\circ_h},r_{\circ_h},l_{\cdot_h},r_{\cdot_h})$.
\end{pro}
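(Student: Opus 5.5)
The plan is to compare the two matched pairs component by component. The matched pair of topologically free associative algebras corresponding to $(A_h,\circ_h,\Delta_h)$ (Proposition~\ref{eqvtam}) is $((A_h,\circ_h),(A^{\star}_h,\cdot'_h),R^{\star}_{\circ_h},L^{\star}_{\circ_h},R^{\star}_{\cdot'_h},L^{\star}_{\cdot'_h})$, where $\cdot'_h$ is the topological dual operation of $\Delta_h$. I will show six identities:
\[
\cdot'_h=\cdot_h,\quad R^{\star}_{\circ_h}=l_{\circ_h},\quad L^{\star}_{\circ_h}=r_{\circ_h},\quad R^{\star}_{\cdot_h}=l_{\cdot_h},\quad L^{\star}_{\cdot_h}=r_{\cdot_h}.
\]
Here $\circ_h$ is the same operation on both sides by Lemma~\ref{Dda}. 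All maps involved are $\bfK$-(bi)linear and $h$-adically continuous, so it suffices to check each identity on $x\in A$ and $a^{\ast},b^{\ast}\in A^{\ast}$ by pairing with elements of $A$. After that, compare coefficients of $h^s$.

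\textbf{Step 1: the dual operations.} Let $D_i=d_i\dotplus\dc_i^{\ast}$. By Proposition~\ref{dmpa}, $a^{\ast}\cdot_h b^{\ast}=\sum_s \dc_1^{\ast s}(a^{\ast})\cdot\dc_2^{\ast s}(b^{\ast})\frac{h^s}{s!}$. Pairing with $x$ gives
\[
\sum_s\langle a^{\ast}\ot b^{\ast},(\dc_1^s\ot\dc_2^s)\Delta(x)\rangle\frac{h^s}{s!}=\langle a^{\ast}\hat\ot b^{\ast},\Delta_h(x)\rangle_{\bfK}
\]
by (\ref{UDFC}). This is exactly $\langle a^{\ast}\cdot'_h b^{\ast},x\rangle_{\bfK}$, so $\cdot'_h=\cdot_h$.

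\textbf{Step 2: the actions on $A^{\ast}$.} For $x,y\in A$ we have
\[
\langle R^{\star}_{\circ_h}(x)a^{\ast},y\rangle_{\bfK}=\sum_s\langle a^{\ast},d_1^s(y)\circ d_2^s(x)\rangle\frac{h^s}{s!}.
\]
On the other side,
\[
\langle l_{\circ_h}(x)a^{\ast},y\rangle=\sum_s\langle R_\circ^{\ast}(d_1^s x)\dc_2^{\ast s}a^{\ast},y\rangle\frac{h^s}{s!}=\sum_s\langle a^{\ast},\dc_2^s(y\circ d_1^s(x))\rangle\frac{h^s}{s!}.
\]
These agree termwise by (\ref{ped1}), with the roles of $x,y$ arranged accordingly. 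In the same way, $L^{\star}_{\circ_h}=r_{\circ_h}$ reduces to comparing $d_1^s(x)\circ d_2^s(y)$ with $\dc_1^s(d_2^s(x)\circ y)$, which is (\ref{ped2}).

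\textbf{Step 3: the actions on $A$.} For $R^{\star}_{\cdot_h}(a^{\ast})x$, pair with $b^{\ast}$:
\[
\langle b^{\ast}\cdot_h a^{\ast},x\rangle=\sum_s\langle b^{\ast}\ot a^{\ast},(\dc_1^s\ot\dc_2^s)\Delta x\rangle\frac{h^s}{s!}.
\]
On the other side,
\[
\langle b^{\ast},l_{\cdot_h}(a^{\ast})x\rangle=\sum_s\langle b^{\ast},R^{\ast}_\cdot(\dc_1^{\ast s}a^{\ast})d_2^s x\rangle\frac{h^s}{s!}=\sum_s\langle b^{\ast}\ot a^{\ast},(\id\ot\dc_1^s)\Delta d_2^s x\rangle\frac{h^s}{s!}.
\]
These agree by (\ref{ped3}). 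Similarly, $L^{\star}_{\cdot_h}=r_{\cdot_h}$ uses (\ref{ped4}).

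\textbf{Main obstacle.} I expect the hardest part to be bookkeeping rather than anything conceptual. One has to track which of $D_1,D_2$ acts on which slot in (\ref{dod}) and (\ref{dcd}), and use the transposition rules $\langle \dc^{\ast}a^{\ast},y\rangle=\langle a^{\ast},\dc y\rangle$ and $R^{\ast}_\circ(x)$ correctly. The task is to land on exactly the forms (\ref{ped1})--(\ref{ped4}), with $x$ and $y$ possibly swapped. If some identity comes out with the derivations in the opposite slots, I would first recheck the convention for $R^{\ast}$ versus $L^{\ast}$ in Proposition~\ref{mpasi}.
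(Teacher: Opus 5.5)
Your proposal is correct and follows the same route as the paper. Like the paper, you first identify the topological dual operation of $\Delta_h$ with the induced product $\cdot_h$ on $A^{\star}_h$ by pairing against $\Delta_h(x)$. You then match $l_{\circ_h},r_{\circ_h},l_{\cdot_h},r_{\cdot_h}$ with $R^{\star}_{\circ_h},L^{\star}_{\circ_h},R^{\star}_{\cdot_h},L^{\star}_{\cdot_h}$ coefficient-by-coefficient in $h$, using (\ref{ped1}) (with $x,y$ swapped), (\ref{ped2}), (\ref{ped3}) and (\ref{ped4}) respectively, and conclude by $\bfK$-bilinearity of the pairing.
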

\begin{proof}
   Let $\cdot_h^{\prime}:A_h\hat{\ot} A_h\to A_h$ be the topological dual operation of $\Delta_h$. Let $a^{\ast},b^{\ast}\in A^{\ast},\;x\in
   A$. Then we have
    \begin{eqnarray*}
        \langle a^{\ast}\cdot^{\prime}_h b^{\ast},x\rangle_{\bf K}
        &=&\langle a^{\ast}\hat{\ot}b^{\ast},\Delta_h(x)\rangle_{\bf K}
        =\langle a^{\ast}\hat{\ot}b^{\ast},\sum_{s=0}^{\infty}(\dc_1^{s}\ot\dc^{s}_{2})\Delta(x)\frac{h^s}{s!}\rangle_{\bf K}\\
        &=&\langle \sum_{s=0}^{\infty} \dc_1^{\ast s}(a^{\ast})\ot\dc_2^{\ast s}(b^{\ast})\frac{h^s}{s!},\Delta(x)\rangle_{\bf K}
        =\langle \sum_{s=0}^{\infty} \dc_1^{\ast s}(a^{\ast})\ot\dc_2^{\ast s}(b^{\ast}),\Delta(x)\rangle\frac{h^s}{s!}\\
        &=&\langle \sum_{s=0}^{\infty} \dc_1^{\ast s}(a^{\ast})\cdot\dc_2^{\ast s}(b^{\ast}),x\rangle\frac{h^s}{s!}
        =\langle \sum_{s=0}^{\infty} \dc_1^{\ast s}(a^{\ast})\cdot\dc_2^{\ast s}(b^{\ast})\frac{h^s}{s!},x\rangle_{\bf K}
            =\langle a^{\ast}\cdot_h b^{\ast},x\rangle_{\bf K}.
    \end{eqnarray*}
Thus, by the $\bfK$-bilinearity of $\langle,\rangle_{\bf K}$,
$\cdot_h$ coincides with $\cdot_h^{\prime}$. So, the matched pair
of topologically free associative algebras corresponding to
$(A_h,\circ_h, \Delta_h)$ is
$((A_h,\circ_h),(A^{\star}_h,\cdot_h),R^{\star}_{\circ_h},L^{\star}_{\circ_h},R^{\star}_{\cdot_h},L^{\star}_{\cdot_h})$.

Moreover, we have
\begin{eqnarray*}
    \langle l_{\circ_h}(x)a^{\ast},y\rangle_{\bf K}
    &=&\langle \sum_{s=0}^{\infty}R^{\ast}_{\circ}(d_1^{s}(x))\dc_2^{\ast s}(a^{\ast})\frac{h^{s}}{s!},y\rangle_{\bf K}
    =\langle \dc_2^{\ast s}(a^{\ast}),\sum_{s=0}^{\infty}R^{\ast}_{\circ}(d_1^{s}(x))y\frac{h^{s}}{s!}\rangle_{\bf K}\\
    &=&\langle a^{\ast},\sum_{s=0}^{\infty}\dc_{2}^{s}(y\circ d_1^{s}(x))\frac{h^{s}}{s!}\rangle_{\bf K}
    \overset{(\ref{ped1})}{=}\langle a^{\ast},\sum_{s=0}^{\infty}\dc_{2}^{s}(y\circ d_1^{s}(x))\frac{h^{s}}{s!}\rangle_{\bf K}\\
    &=&\langle a^{\ast},\sum_{s=0}^{\infty}d_{1}^{s}(y)\circ d_2^{s}(x)\frac{h^{s}}{s!}\rangle_{\bf K}
    =\langle a^{\ast},y\circ_h x\rangle_{\bf K}
    =\langle R^{\star}_{\circ_h}(x)a^{\ast},y\rangle_{\bf K}.
\end{eqnarray*}
By the $\bfK$-bilinearity of $\langle,\rangle_{\bf K}$, we have
$l_{\circ_h}=R^{\star}_{\circ_h}$. After a similar discussion,
by~(\ref{ped2}), we have $r_{\circ_h}=L^{\star}_{\circ_h}$;
by~(\ref{ped3}), we have $l_{\cdot_h}=R^{\star}_{\cdot_h}$;
 by~(\ref{ped4}),
we have $r_{\cdot_h}=L^{\star}_{\cdot_h}$. Hence the conclusion
holds. \delete{ For all $x\in A,\;a^{\ast},b^{\ast}\in A^{\ast}$,
\begin{eqnarray*}
    \langle b^{\ast},l_{\cdot_h}(a^{\ast})x\rangle_{\bf K}
    &=&\langle b^{\ast},\sum_{s=0}^{\infty}R^{\ast}_{\cdot}(\dc_1^{\ast s}(a^{\ast})d_2^{s}(x)\frac{h^{s}}{s!}\rangle_{\bf K}
    =\sum_{s=0}^{\infty}\langle b^{\ast}\cdot\dc_1^{\ast s}(a^{\ast})\frac{h^{s}}{s!},d_2^{s}(x)\rangle_{\bf K}\\
    &=&\sum_{s=0}^{\infty}\langle b^{\ast}\cdot\dc_1^{\ast s}(a^{\ast}),d_2^{s}(x)\rangle\frac{h^{s}}{s!}
    =\sum_{s=0}^{\infty}\langle b^{\ast}\ot\dc_1^{\ast s}(a^{\ast}),\Delta(d_2^{s}(x))\rangle\frac{h^{s}}{s!}\\
        &=&\sum_{s=0}^{\infty}\langle b^{\ast}\ot a^{\ast},(\id_{A}\ot\dc_1^{s})\Delta(d_2^{s}(x))\rangle\frac{h^{s}}{s!}
    \overset{(\ref{ped3})}{=}\langle b^{\ast}\ot a^{\ast},\sum_{s=0}^{\infty}(\dc_1^{s}\ot\dc_2^{s})\Delta(x)\frac{h^{s}}{s!}\rangle_{\bf K}\\
    &=&\langle b^{\ast}\hat{\ot} a^{\ast},\Delta_h(x)\rangle_{\bf K}
    =\langle b^{\ast}\cdot_h a^{\ast}, x\rangle_{\bf K}
    =\langle b^{\ast},R_{\cdot_h}^{\star}(a^{\ast})x\rangle_{\bf K}.
\end{eqnarray*}
Therefore, $l_{\cdot_h}=R^{\star}_{\cdot_h}$. Similarly,
by~(\ref{ped4}), we deduce that $r_{\cdot_h}=L^{\star}_{\cdot_h}$,
which completes the proof.}
\end{proof}
\begin{rmk}
    By the above conclusion, we see that $((A_h,\circ_h),(A^{\star}_h,\cdot_h),R^{\star}_{\circ_h},L^{\star}_{\circ_h},R^{\star}_{\cdot_h},L^{\star}_{\cdot_h})$ coincides with the matched pair of topologically free associative algebras $((A_h,\circ_h),(A^{\star}_h,\cdot_h),l_{\circ_h},r_{\circ_h},l_{\cdot_h},r_{\cdot_h})$. Then we complete another proof of Theorem~{\rm\ref{ddasi}} in the case of the ASI bialgebra therein being finite-dimensional by Proposition~{\rm\ref{eqvtam}}.
\end{rmk}
By Proposition~\ref{prodmcdasi}~and Theorem~\ref{qcldadm}, we obtain the following result.
\begin{cor}
    Let $(d_1,\dc_1)$ and $(d_2,\dc_2)$ be coherent derivations on a finite-dimensional commutative and cocommutative ASI bialgebra $(A, \circ, \Delta)$. Suppose that $d_1$ commutes with $d_2$, $\dc_1$ commutes with $\dc_2$ and {\rm(\ref{ed1})}, {\rm(\ref{ed2})}, {\rm(\ref{ed3})} and {\rm(\ref{ed4})} hold. Then, let $(A_h, \circ_h, \Delta_h)$ be the ASI bialgebra deformation induced by $(A,\circ, d_1,d_2,\Delta,\dc_1,\dc_2)$ and $(A,\{,\},\circ,\delta,\Delta)$ be the QCL.
     Let $[,],\cdot:A\ot A\to A$ be the dual maps of $\delta,\Delta$. Let $((A_h,\circ_h),(A^{\star}_h,\cdot_h),l_{\circ_h},r_{\circ_h},l_{\cdot_h},r_{\cdot_h})$ be the
     associative matched pair deformation induced by $((A,\circ),(A^{\ast},\cdot),R_{\circ}^{\ast},L_{\circ}^{\ast},R_{\cdot}^{\ast},L_{\cdot}^{\ast},d_1\dotplus\dc^{\ast}_1,d_2\dotplus\dc^{\ast}_2)$.
Then the matched pair of topologically free associative algebras
corresponding to $(A_h, \circ_h, \Delta_h)$ coincides with
$((A_h,\circ_h),(A^{\star}_h,\cdot_h),l_{\circ_h},r_{\circ_h},l_{\cdot_h},r_{\cdot_h})$.
Moreover, the matched pair of Poisson algebras corresponding to
$(A,\{,\},\circ,\delta,\Delta)$ is exactly the QCL of
$((A_h,\circ_h),(A^{\star}_h,\cdot_h),l_{\circ_h},r_{\circ_h},l_{\cdot_h},r_{\cdot_h})$.
In other words, the following diagram commutes.
    \begin{displaymath}
    \xymatrixcolsep{6pc}\xymatrixrowsep{6pc}
    \xymatrix{
        *\txt{commutative and\\cocommutaive\\ ASI bialgebra $(A,\circ,\Delta)$,\\coherent derivations\\$(d_1,\dc_1),(d_2,\dc_2)$}\ar[d]|-{\txt{\small Proposition~{\rm\ref{mpasi}}}}\ar[r]^{\txt{\small deformation}}_{\txt{\small Theorem~{\rm\ref{ddasi}}}}&*\txt{topologically free\\ASI bialgebra\\$(A_{h},\circ_{h},\Delta_{h})$}\ar[r]^{\txt{\small QCL}}_{\txt{\small Theorem~{\rm\ref{thm-limasi}}}}\ar[d]|-{\txt{\small Proposition~{\rm\ref{eqvtam}}}}&*\txt{Poisson\\bialgebra\\$(A,\{,\},\circ,\delta,\Delta)$}\ar[d]|-{\txt{\small Proposition~{\rm\ref{eqvmpp}}}}\\
        *\txt{matched pair\\of commutative\\associative algebras\\$((A,\circ),(A^{\ast},\cdot),R_{\circ}^{\ast},R_{\cdot}^{\ast})$,\\derivations\\$d_1\dotplus\dc^{\ast}_1,d_2\dotplus\dc^{\ast}_2$}\ar[r]^{\txt{\small deformation}}_{\txt{\small Proposition~{\rm\ref{dmpa}}}}
        & *\txt{matched pair of\\topologically free\\associative algebras\\$((A_h,\circ_h),(A^{\star}_h,\cdot_h),$\\$R^{\star}_{\circ_h},L^{\star}_{\circ_h},R^{\star}_{\cdot_h},L^{\star}_{\cdot_h})$}\ar[r]^{\txt{\small QCL}}_{\txt{\small Theorem~{\rm\ref{qclpipei}}}}
        & *\txt{matched pair of\\Poisson algebras\\$((A,\{,\},\circ),(A^{\ast},[,],\cdot),$\\$-\ad_{\{,\}}^{\ast},R_{\circ}^{\ast},-\ad_{[,]}^{\ast},R_{\cdot}^{\ast})$.}
    }
    \end{displaymath}
\end{cor}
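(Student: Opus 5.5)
The corollary should follow by combining Proposition~\ref{prodmcdasi} with Theorem~\ref{qcldadm}~(\ref{qcldadm1}). No new computation is needed beyond checking that the hypotheses of those results are satisfied by our data.

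\textbf{Step 1: setting up the two deformations.} Since $(d_1,\dc_1)$ and $(d_2,\dc_2)$ are coherent derivations with $d_1d_2=d_2d_1$ and $\dc_1\dc_2=\dc_2\dc_1$, Proposition~\ref{proecdd} shows that $d_i\dotplus\dc_i^{\ast}$ ($i=1,2$) are derivations on $A\bowtie_{R^{\ast}_{\cdot},L^{\ast}_{\cdot}}^{R^{\ast}_{\circ},L^{\ast}_{\circ}}A^{\ast}$. These two derivations commute, because $(\dc_1\dc_2)^{\ast}=\dc_2^{\ast}\dc_1^{\ast}$ and $\dc_1,\dc_2$ commute. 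They also preserve the summands $A$ and $A^{\ast}$. So Proposition~\ref{dmpa} applies and yields the associative matched pair deformation $((A_h,\circ_h),(A^{\star}_h,\cdot_h),l_{\circ_h},r_{\circ_h},l_{\cdot_h},r_{\cdot_h})$. On the other side, conditions (\ref{ed1})--(\ref{ed4}) together with Theorem~\ref{ddasi} give the ASI bialgebra deformation $(A_h,\circ_h,\Delta_h)$.

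\textbf{Step 2: identifying the two matched pairs.} Proposition~\ref{prodmcdasi} applies verbatim and gives the first claim: the matched pair corresponding to $(A_h,\circ_h,\Delta_h)$ via Proposition~\ref{eqvtam}, namely $((A_h,\circ_h),(A^{\star}_h,\cdot_h),R^{\star}_{\circ_h},L^{\star}_{\circ_h},R^{\star}_{\cdot_h},L^{\star}_{\cdot_h})$, coincides with $((A_h,\circ_h),(A^{\star}_h,\cdot_h),l_{\circ_h},r_{\circ_h},l_{\cdot_h},r_{\cdot_h})$. The $\circ_h$ of the Proposition~\ref{dmpa} construction is the same series (\ref{UDF}) as in Theorem~\ref{ddasi}, because $d_i\dotplus\dc_i^{\ast}$ restricts to $d_i$ on $A$.

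\textbf{Step 3: passing to the QCL.} Since $(A,\circ,\Delta)$ is commutative and cocommutative, Theorem~\ref{qcldadm}~(\ref{qcldadm1}) says the QCL of $((A_h,\circ_h),(A^{\star}_h,\cdot_h),R^{\star}_{\circ_h},L^{\star}_{\circ_h},R^{\star}_{\cdot_h},L^{\star}_{\cdot_h})$ is $((A,\{,\},\circ),(A^{\ast},[,],\cdot),-\ad_{\{,\}}^{\ast},R_{\circ}^{\ast},-\ad_{[,]}^{\ast},R_{\cdot}^{\ast})$. By Proposition~\ref{eqvmpp}, this is the matched pair of Poisson algebras corresponding to $(A,\{,\},\circ,\delta,\Delta)$. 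Substituting the equality from Step 2 gives the second claim, and the commutative diagram is then just a restatement of these equalities.

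The only step needing care is Step 2: the objects named in the corollary must be literally the same ones appearing in Proposition~\ref{prodmcdasi}, including the commutativity of $d_i\dotplus\dc_i^{\ast}$. Once that is verified, the rest is bookkeeping.
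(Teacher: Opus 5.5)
Your proposal is correct and follows the paper's route: the paper obtains this corollary directly by combining Proposition~\ref{prodmcdasi} (which identifies the two matched pairs of topologically free associative algebras) with Theorem~\ref{qcldadm}~(\ref{qcldadm1}) (which identifies the QCL with the matched pair of Poisson algebras corresponding to the QCL Poisson bialgebra). Your extra checks in Steps 1 and 2 are sound but are bookkeeping the paper leaves implicit: the derivations $d_i\dotplus\dc_i^{\ast}$ commute and preserve the summands, and the $\circ_h$ from Proposition~\ref{dmpa} agrees with the series (\ref{UDF}).
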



\begin{rmk}
    Conversely, starting from a matched pair of commutative associative algebras $((A,\circ),(A^{\ast},\cdot),R_{\circ}^{\ast},L_{\circ}^{\ast},R_{\cdot}^{\ast},L_{\cdot}^{\ast})$ and commuting derivations $d_1\dotplus\dc^{\ast}_1,d_2\dotplus\dc^{\ast}_2$ on $A\bowtie_{R^{\ast}_{\cdot}}^{R^{\ast}_{\circ}}A^{\ast}$ where $d_1,\dc_1,d_2,\dc_2:A\to A$ are linear maps and $\dc^{\ast}_1,\dc^{\ast}_2$ are linear dual maps of $\dc_1,\dc_2$, respectively, and assuming that {\rm(\ref{ed1})}, {\rm(\ref{ed2})}, {\rm(\ref{ed3})} and {\rm(\ref{ed4})} hold, we can establish upward arrows in the above diagram.
\end{rmk}
\section{Associative Manin triple deformations}\label{S4}
We study the associative Manin triple deformation and show that
the corresponding quasiclassical limit is a standard Manin triple
of Poisson algebras. Thus, we obtain another equivalent
characterization of the deformations-quasiclassical limits process
for a Poisson bialgebra.
\subsection{Associative Manin triple deformations and the quasiclassical limits}
\begin{defi}
    Let $(A, \cdot)$ be an associative algebra. Then a bilinear form $\mathfrak{B}$ on $(A, \cdot)$ is \textbf{invariant} (or \textbf{associative} \cite{SY}) if
    \begin{equation*}
    \mathfrak{B}(x\cdot y,z)=\mathfrak{B}(x,y\cdot z),\;\;\forall x,y,z\in A.
    \end{equation*}

    Let $(A_h,\cdot_h)$ be a topologically free associative algebra. Then a $\bfK$-bilinear form $\mathfrak{B}_h$ on $(A_h,\cdot_h)$ is \textbf{invariant} if
    \begin{equation*}
    \mathfrak{B}_h(x_h\cdot_h y_h,z_h)=\mathfrak{B}_h(x_h, y_h\cdot_h z_h),\;\;\forall x_h, y_h, z_h\in A_h.
    \end{equation*}

   Let $(P,\{,\},\circ)$ be a Poisson algebra. Then a bilinear form $\mathfrak{B}$ on $(P,\{,\},\circ)$ is \textbf{invariant} if
    \begin{equation*}
    \mathfrak{B}(x\circ y,z)=\mathfrak{B}(x,y\circ z),\;\;\mathfrak{B}(\{x, y\},z)=\mathfrak{B}(x,\{y,z\}),\;\;\forall x,y,z\in P.
    \end{equation*}
\end{defi}

    Let $\mathfrak{B}$ be a bilinear form on a vector space $V$. Then the $\bfK$-bilinear form $\mathfrak{B}_{\bfK}:V_h\hat{\ot} V_h\to\bfK$ is defined by
\begin{equation}\label{scb}
    \mathfrak{B}_{\bfK}(\sum_{s=0}^{\infty}x_s~h^s,\sum_{s=0}^{\infty}y_s~h^s):=\sum_{l=0}^{\infty}\sum_{s=0}^{l}\mathfrak{B}(x_s,y_{l-s})~h^l,\;\; \forall\sum_{s=0}^{\infty}x_s~h^s,\sum_{s=0}^{\infty}y_s~h^s\in V_h.
\end{equation}
\begin{thm}
    \label{dinv}
    Let $(A_h,\circ_h)$ be an associative deformation of a commutative associative algebra $(A,\circ)$ and $(A,\{,\},\circ)$ be the QCL. Let $\mathfrak{B}$ be a symmetric invariant bilinear form on $(A,\circ)$. If $\mathfrak{B}_{\bfK}$ is an invariant $\bfK$-bilinear form on $(A_h,\circ_h)$, then $\mathfrak{B}$ is an invariant bilinear form on $(A,\{,\},\circ)$.
\end{thm}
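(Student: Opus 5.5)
The invariance $\mathfrak{B}(x\circ y,z)=\mathfrak{B}(x,y\circ z)$ is given by hypothesis. It therefore remains to show $\mathfrak{B}(\{x,y\},z)=\mathfrak{B}(x,\{y,z\})$ for all $x,y,z\in A$. The plan is to expand the invariance identity of $\mathfrak{B}_{\bfK}$ on $(A_h,\circ_h)$ in powers of $h$ and read off the first-order term.

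First I will note the compatibility of $\mathfrak{B}_{\bfK}$ with congruences. By (\ref{scb}), if $w_h\equiv w_0+w_1h\pmod{h^2}$ with $w_0,w_1\in A$ and $z\in A$, then
\[\mathfrak{B}_{\bfK}(w_h,z)\equiv\mathfrak{B}(w_0,z)+\mathfrak{B}(w_1,z)\,h\pmod{h^2},\]
and symmetrically in the other slot. Writing $x\circ_h y\equiv x\circ y+\mu_1(x,y)h\pmod{h^2}$, the definition of the QCL together with the commutativity of $\circ$ gives $\mu_1(x,y)-\mu_1(y,x)=\{x,y\}$.

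Next I will take the invariance identity $\mathfrak{B}_{\bfK}(x\circ_h y,z)=\mathfrak{B}_{\bfK}(x,y\circ_h z)$ for $x,y,z\in A$ and subtract it from the same identity with the roles permuted, so that commutators appear. The most natural combination is
\[\mathfrak{B}_{\bfK}(x\circ_h y-y\circ_h x,z)=\mathfrak{B}_{\bfK}(x\circ_h y,z)-\mathfrak{B}_{\bfK}(y\circ_h x,z).\]
Using invariance and the symmetry of $\mathfrak{B}_{\bfK}$, which is inherited from the symmetry of $\mathfrak{B}$ via (\ref{scb}), I rewrite both terms in the form $\mathfrak{B}_{\bfK}(x,\cdot)$:
\[\mathfrak{B}_{\bfK}(x\circ_h y,z)=\mathfrak{B}_{\bfK}(x,y\circ_h z),\]
\[\mathfrak{B}_{\bfK}(y\circ_h x,z)=\mathfrak{B}_{\bfK}(z,y\circ_h x)=\mathfrak{B}_{\bfK}(z\circ_h y,x)=\mathfrak{B}_{\bfK}(x,z\circ_h y).\]
Hence
\[\mathfrak{B}_{\bfK}(x\circ_h y-y\circ_h x,z)=\mathfrak{B}_{\bfK}(x,y\circ_h z-z\circ_h y).\]
Both differences are divisible by $h$. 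Comparing the coefficients of $h$ via the congruence above then yields $\mathfrak{B}(\{x,y\},z)=\mathfrak{B}(x,\{y,z\})$.

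The main point needing care is the middle rewriting. The symmetry of $\mathfrak{B}_{\bfK}$ must be checked directly from (\ref{scb}): swapping the arguments reindexes $s\mapsto l-s$ and uses the symmetry of $\mathfrak{B}$. Invariance must be applied with elements of $A\subset A_h$, which is legitimate since the hypothesis holds for all of $A_h$. The last step, extracting the $h$-coefficient, is justified because $\bfK$ is torsion-free: after dividing by $h$, the identity can be reduced mod $h$.
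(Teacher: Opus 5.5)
Your proof is correct and follows the paper's argument essentially verbatim. Like the paper, you combine invariance at $(z,y,x)$ with the symmetry of $\mathfrak{B}_{\bfK}$ inherited from $\mathfrak{B}$ to get $\mathfrak{B}_{\bfK}(y\circ_h x,z)=\mathfrak{B}_{\bfK}(x,z\circ_h y)$, subtract it from $\mathfrak{B}_{\bfK}(x\circ_h y,z)=\mathfrak{B}_{\bfK}(x,y\circ_h z)$, and read off the coefficient of $h$; your check of how $\mathfrak{B}_{\bfK}$ behaves modulo $h^2$ simply makes the paper's ``taking the first order terms'' step explicit.
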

\begin{proof}
        Let $x,y,z\in A$. Then we have
        \begin{equation}\label{inv1}
  \mathfrak{B}_{\bfK}(x\circ_{h}y,z)=\mathfrak{B}_{\bfK}(x,y\circ_{h}z).
    \end{equation}
Interchanging the two sides of the equality in (\ref{inv1}) and then interchanging $x$ and $z$, we obtain
    \begin{equation}\label{inv2}
\mathfrak{B}_{\bfK}(z,y\circ_{h}x)=\mathfrak{B}_{\bfK}(z\circ_{h}y,x).
\end{equation}
    Since $\mathfrak{B}$ is symmetric, by (\ref{scb}), we show that
\begin{equation}\label{dcb}
\mathfrak{B}_{\mathbf{K}}(x_h,y_h)=\mathfrak{B}_{\mathbf{K}}(y_h,x_h),\;\;\forall x_h,y_h\in A_h,
\end{equation}
that is, $\mathfrak{B}_{\bfK}$ is a symmetric $\bfK$-bilinear form on $(A_h,\circ_h)$.
Applying the symmetry of $\mathfrak{B}_{\bfK}$ to (\ref{inv2}) yields
\begin{equation}\label{inv3}
\mathfrak{B}_{\bfK}(y\circ_{h}x,z)=\mathfrak{B}_{\bfK}(x,z\circ_{h}y).
\end{equation}
Subtracting (\ref{inv3}) from (\ref{inv1}) and taking the first order terms of $h$, we get
\begin{equation*}
    \mathfrak{B}(\{x, y\},z)=\mathfrak{B}(x,\{y,z\}),\;\;\forall x,y,z\in A.
\end{equation*}
We complete the proof by noting that $\mathfrak{B}$ is an invariant bilinear form on $(A,\circ)$.
\end{proof}
We recall the notion of a standard Manin triple of associative
algebras, which is exactly  the notion of a double construction of
Frobenius algebra in \cite{Bai}.
\begin{defi}
    Let $(A,\circ)$ and $(A^{\ast},\cdot)$ be subalgebras of an associative algebra $(A\oplus A^{\ast},\odot)$. Define a bilinear form $\mathfrak{B}$ on $(A\oplus A^{\ast},\odot)$ by
    \begin{equation}\label{dsdb}
        \mathfrak{B}(x+a^{\ast},y+b^{\ast}):=\langle a^{\ast},y\rangle+\langle b^{\ast},x\rangle,\;\;\forall x,y\in A,\;a^{\ast},b^{\ast}\in A^{\ast}.
    \end{equation}
    If $\mathfrak{B}$ is an invariant bilinear form on $(A\oplus A^{\ast},\odot)$, then we call the triple $((A\oplus A^{\ast},\odot),A, A^{\ast})$ a \textbf{standard Manin triple of associative algebras}. Moreover, if $(A\oplus A^{\ast},\odot)$ is commutative, then we call
    $((A\oplus A^{\ast},\odot),A, A^{\ast})$ a \textbf{standard Manin triple of commutative associative algebras}.
\end{defi}
\begin{defi}
        Let $(A_h,\circ_h)$ and $(A^{\star}_h,\cdot_h)$ be subalgebras of a topologically free associative algebra $(A_h\oplus A^{\star}_h,\odot_h)$. Define a bilinear form $\mathfrak{B}_h$ on $(A_h\oplus A^{\star}_h,\odot_h)$ by
    \begin{equation}\label{dsdbh}
    \mathfrak{B}_h(x_h+a^{\star},y_h+b^{\star}):=\langle a^{\star},y_h\rangle_{\bfK}+\langle b^{\star},x_h\rangle_{\bfK},\;\;\forall x_h,y_h\in A_h,\;a^{\star},b^{\star}\in A^{\star}_h.
    \end{equation}
    If $\mathfrak{B}_{h}$ is an invariant $\bfK$-bilinear form on $(A_h\oplus A^{\star}_h,\odot_h)$, then we call $((A_h\oplus A^{\star}_h,\odot_h),A_h, A^{\star}_h)$ a \textbf{standard Manin triple of topologically free associative algebras}.

        Let $((A\oplus A^{\ast},\odot),A, A^{\ast})$ be a standard Manin triple of associative algebras and let $((A_h\oplus A^{\star}_h,\odot_h)$, $A_h, A^{\star}_h)$ be a standard Manin triple of topologically free associative algebras. If $(A_h\oplus A^{\star}_h,\odot_h)$ is an associative deformation of $(A\oplus A^{\ast},\odot)$, then $((A_h\oplus A^{\star}_h,\odot_h),A_h, A^{\star}_h)$ is called an \textbf{associative Manin triple deformation} of $((A\oplus A^{\ast},\odot),A, A^{\ast})$. In this case, if $(A\oplus A^{\ast},\odot)$ is commutative and $(A\oplus A^{\ast},\llbracket,\rrbracket,\odot)$ is the QCL of $(A_h\oplus A^{\star}_h,\odot_h)$, then we call $((A\oplus A^{\ast},\llbracket,\rrbracket,\odot),A, A^{\ast})$ the \textbf{quasiclassical limit (QCL) of the associative Manin triple deformation} $((A_h\oplus A^{\star}_h,\odot_h),A_h, A^{\star}_h)$.
\end{defi}
We then recall the notion of a standard Manin triple of Poisson algebras.
\begin{defi}{\rm(\cite{NB})}
 Let $(A,\{,\},\circ)$ and $(A^{\ast},[,],\cdot)$ be subalgebras of a Poisson algebra $(A\oplus A^{\ast},\llbracket,\rrbracket,\odot)$. Define a bilinear form $\mathfrak{B}$ on $(A\oplus A^{\ast},\llbracket,\rrbracket,\odot)$ by {\rm(\ref{dsdb})}.
    If $\mathfrak{B}$ is an invariant bilinear form on $(A\oplus A^{\ast},\llbracket,\rrbracket,\odot)$, then we call
    $((A\oplus A^{\ast},\llbracket,\rrbracket,\odot),A, A^{\ast})$ a \textbf{standard Manin triple of Poisson algebras}.
\end{defi}
\begin{thm}
    \label{qcldmt}
   Let $((A_h\oplus A^{\star}_h,\odot_h),A_h, A^{\star}_h)$ be an associative Manin triple deformation of a standard Manin triple of  commutative associative algebras $((A\oplus A^{\ast},\odot),A, A^{\ast})$ and $((A\oplus A^{\ast},\llbracket,\rrbracket,\odot),A, A^{\ast})$ be the QCL. Then $((A\oplus A^{\ast},\llbracket,\rrbracket,\odot),A, A^{\ast})$ is a standard Manin triple of Poisson algebras.
\end{thm}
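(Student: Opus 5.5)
We verify in turn the three ingredients of a standard Manin triple of Poisson algebras: $(A\oplus A^{\ast},\llbracket,\rrbracket,\odot)$ is a Poisson algebra, $A$ and $A^{\ast}$ are Poisson subalgebras, and the bilinear form $\mathfrak{B}$ of (\ref{dsdb}) is invariant for both operations. The first ingredient is immediate from Theorem~\ref{thacl}: $(A_h\oplus A^{\star}_h,\odot_h)$ is an associative deformation of the commutative associative algebra $(A\oplus A^{\ast},\odot)$, identified with $(A\oplus A^{\ast})_h$ as in Remark~\ref{ids}, and $(A\oplus A^{\ast},\llbracket,\rrbracket,\odot)$ is its QCL.

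\textbf{Subalgebras.} Since $(A_h,\circ_h)$ is a subalgebra of $(A_h\oplus A^{\star}_h,\odot_h)$, we have $x\odot_h y\in A_h$ for $x,y\in A$. Hence the coefficient of $h$ in $x\odot_h y-y\odot_h x$, namely $\llbracket x,y\rrbracket$, lies in $A$. The same reasoning applied to $A^{\star}_h=(A^{\ast})_h$ shows that $A^{\ast}$ is closed under $\llbracket,\rrbracket$. Closure under $\odot$ is already part of the hypothesis that $((A\oplus A^{\ast},\odot),A,A^{\ast})$ is a standard Manin triple.

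\textbf{Invariance.} The aim is to apply Theorem~\ref{dinv} to the associative deformation $(A_h\oplus A^{\star}_h,\odot_h)$ of $(A\oplus A^{\ast},\odot)$ with the form $\mathfrak{B}$ of (\ref{dsdb}). This form is symmetric by construction and invariant on $(A\oplus A^{\ast},\odot)$ by assumption. The remaining hypothesis of Theorem~\ref{dinv} is that $\mathfrak{B}_{\bfK}$ be invariant on $(A_h\oplus A^{\star}_h,\odot_h)$, and this is the main point to check. I would prove it by showing $\mathfrak{B}_{\bfK}=\mathfrak{B}_h$ under the identification $A_h\oplus A^{\star}_h\cong(A\oplus A^{\ast})_h$. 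Write $x_h=\sum_s x_sh^s$, $a^{\star}=\sum_s a^{\ast}_sh^s$, and similarly for $y_h$ and $b^{\star}$. By the definition of $\langle,\rangle_{\bfK}$,
\[\langle a^{\star},y_h\rangle_{\bfK}=\sum_{l}\sum_{s=0}^{l}\langle a^{\ast}_s,y_{l-s}\rangle h^l,\]
and likewise for $\langle b^{\star},x_h\rangle_{\bfK}$. Comparing with (\ref{scb}) applied to $\mathfrak{B}(x_s+a^{\ast}_s,y_{l-s}+b^{\ast}_{l-s})=\langle a^{\ast}_s,y_{l-s}\rangle+\langle b^{\ast}_{l-s},x_s\rangle$ gives $\mathfrak{B}_{\bfK}=\mathfrak{B}_h$ term by term. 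Since $\mathfrak{B}_h$ is invariant by the Manin triple deformation hypothesis, $\mathfrak{B}_{\bfK}$ is invariant.

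Theorem~\ref{dinv} then shows that $\mathfrak{B}$ is invariant on $(A\oplus A^{\ast},\llbracket,\rrbracket,\odot)$, which completes the proof. The only nontrivial step is the identification $\mathfrak{B}_{\bfK}=\mathfrak{B}_h$; it reduces to checking that both forms are $\bfK$-bilinear with the same coefficient expansion, so I expect no real obstruction there.
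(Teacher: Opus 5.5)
Your proposal is correct and follows the paper's proof: you read off closure of $A$ and $A^{\ast}$ under $\llbracket,\rrbracket$ from the subalgebras $A_h$ and $A^{\star}_h$, and you get invariance by identifying $\mathfrak{B}_{\bfK}$ with $\mathfrak{B}_h$ and invoking Theorem~\ref{dinv}. Your coefficientwise comparison of the two forms is a slightly more explicit version of the paper's step, which checks that they agree on $A\oplus A^{\ast}$ and extends by $\bfK$-bilinearity; your version handles the infinite series directly.
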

\begin{proof}
    Since $(A_h,\odot_h)$ and $(A^{\star}_h,\odot_h)$ are subalgebras of
 $(A_h\oplus A^{\star}_h,\odot_h)$, $(A,\llbracket,\rrbracket,\odot)$ and $(A^{\ast},\llbracket,\rrbracket,\odot)$ are subalgebras of $(A\oplus A^{\ast},\llbracket,\rrbracket,\odot)$. Define a bilinear form $\mathfrak{B}$ on $(A\oplus A^{\ast},\odot)$ and a $\bfK$-bilinear form $\mathfrak{B}_h$ on $(A_h\oplus A^{\star}_h,\odot_h)$ by (\ref{dsdb}) and (\ref{dsdbh}), respectively. Then for all $x,y\in A,\;a^{\ast},b^{\ast}\in A^{\ast}$,
    \[\mathfrak{B}_h(x+a^{\ast},y+b^{\ast})=\langle a^{\ast},y\rangle_{\bfK}+\langle b^{\ast},x\rangle_{\bfK}=\langle a^{\ast},y\rangle+\langle b^{\ast},x\rangle=  \mathfrak{B}(x+a^{\ast},y+b^{\ast}).\]
    Thus $\mathfrak{B}_h$ and  $\mathfrak{B}_{\bfK}$  coincide on $(A\oplus A^\ast)^{\ot 2}$. By their $\bfK$-bilinearity, $\mathfrak{B}_h=\mathfrak{B}_{\bfK}$. Due to Theorem~\ref{dinv}, $\mathfrak{B}$
    is an invariant bilinear form on $(A\oplus A^{\ast},\llbracket,\rrbracket,\odot)$, proving the conclusion.
\end{proof}
\subsection{Associative Manin triple deformations and antisymmetric infinitesimal bialgebra deformations}
\begin{pro}
    {\rm(\cite{Bai})}\label{eqvbasi}
    Let $A$ be a finite-dimensional vector space, $\circ$ be a binary operation on $A$, $\Delta:A\to A\ot A$ be a linear map and
$\cdot$  be the dual operation of $\Delta$. Define a binary operation $\odot$ on $A\oplus A^{\ast}$ by
    \begin{equation}\label{bod}
        (x+a^{\ast})\odot(y+b^{\ast}):=x\circ y+R_{\cdot}^{\ast}(a^{\ast})y+L_{\cdot}^{\ast}(b^{\ast})x+R_{\circ}^{\ast}(x)b^{\ast}+L_{\circ}^{\ast}(y)a^{\ast}+ a^{\ast}\cdot b^{\ast},
    \end{equation}
   for all $x,y\in A,\;a^{\ast},b^{\ast}\in A^{\ast}$. Then $(A,\circ,\Delta)$ is an ASI bialgebra if and only if $((A\oplus A^{\ast},\odot),A,A^{\ast})$ is a standard Manin triple
    of associative algebras. \delete{In this case, we call $(A,\circ,\Delta)$ the \textbf{ASI bialgebra corresponding to} the standard Manin triple of associative algebras $((A\oplus A^{\ast},\odot),A,A^{\ast})$
    and  $((A\oplus A^{\ast},\odot),A,A^{\ast})$ the \textbf{standard Manin triple of associative algebras corresponding to} the ASI bialgebra $(A,\circ,\Delta)$.}
    Moreover, $(A,\circ,\Delta)$ is a commutative and cocommutative ASI bialgebra if and only if $((A\oplus A^{\ast},\odot),A,A^{\ast})$ is a standard Manin triple of commutative associative algebras.
    \delete{In this case, we call $(A,\circ,\Delta)$ the \textbf{commutative and cocommutative ASI bialgebra corresponding to} the standard Manin triple of commutative associative algebras
     $((A\oplus A^{\ast},\odot),A,A^{\ast})$ and $((A\oplus A^{\ast},\odot),A,A^{\ast})$ the \textbf{standard Manin triple of commutative associative algebras corresponding to} the
      commutative and cocommutative ASI bialgebra $(A,\circ,\Delta)$.} 
\end{pro}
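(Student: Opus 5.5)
The plan is to reduce everything to Proposition~\ref{mpasi}. The algebra $(A\oplus A^{\ast},\odot)$ defined by (\ref{bod}) is exactly $A\bowtie_{R_{\cdot}^{\ast},L_{\cdot}^{\ast}}^{R_{\circ}^{\ast},L_{\circ}^{\ast}}A^{\ast}$ as in Definition~\ref{defiam}. So the statement splits into two parts. First, associativity of $\odot$ together with $A$ and $A^{\ast}$ being subalgebras is equivalent to the matched pair condition. Second, invariance of $\mathfrak{B}$ from (\ref{dsdb}) holds automatically for this particular $\odot$.

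For the direction ``ASI bialgebra $\Rightarrow$ Manin triple'', Proposition~\ref{mpasi} makes $(A\oplus A^{\ast},\odot)$ associative, with $(A,\circ)$ and $(A^{\ast},\cdot)$ as subalgebras. It remains to check $\mathfrak{B}(X\odot Y,Z)=\mathfrak{B}(X,Y\odot Z)$ for $X,Y,Z\in A\oplus A^{\ast}$. By trilinearity it suffices to treat the eight cases where each argument lies in $A$ or in $A^{\ast}$.
\begin{itemize}
\item All three in $A$, or all three in $A^{\ast}$: both sides vanish.
\item The mixed cases: each reduces to the definitions of the dual maps. For example, for $x,y\in A$ and $c^{\ast}\in A^{\ast}$,
\[
\mathfrak{B}(x\odot y,c^{\ast})=\langle c^{\ast},x\circ y\rangle=\langle R_{\circ}^{\ast}(y)c^{\ast},x\rangle=\mathfrak{B}(x,y\odot c^{\ast}),
\]
since $y\odot c^{\ast}=R_{\circ}^{\ast}(y)c^{\ast}+L_{\cdot}^{\ast}(c^{\ast})y$ and $\mathfrak{B}(x,L_{\cdot}^{\ast}(c^{\ast})y)=0$. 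The cases with two factors in $A^{\ast}$ use $\langle a^{\ast}\cdot b^{\ast},x\rangle=\langle a^{\ast}\otimes b^{\ast},\Delta(x)\rangle$ in the same way.
\end{itemize}

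For the converse, suppose $((A\oplus A^{\ast},\odot'),A,A^{\ast})$ is a standard Manin triple for some associative product $\odot'$ restricting to $\circ$ and $\cdot$. I would show that invariance of $\mathfrak{B}$ forces $\odot'=\odot$; I expect this to be the main obstacle. Write $x\odot' b^{\ast}=\alpha+\beta^{\ast}$. Pairing with $y\in A$ and $c^{\ast}\in A^{\ast}$ via $\mathfrak{B}$, and using invariance together with nondegeneracy of $\mathfrak{B}$ (it is the natural pairing, and $A$ is finite-dimensional), identifies $\beta^{\ast}=R_{\circ}^{\ast}(x)b^{\ast}$ and $\alpha=L_{\cdot}^{\ast}(b^{\ast})x$. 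The same argument handles $a^{\ast}\odot' y$. Hence $\odot'=\odot$ is associative, and Proposition~\ref{mpasi} gives that $(A,\circ,\Delta)$ is an ASI bialgebra. The delicate point is keeping track of which of $L$, $R$ and which side of the pairing appears, so that the resulting actions match those of Proposition~\ref{mpasi} exactly rather than their opposites.

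For the commutative statement, the ``moreover'' part of Proposition~\ref{mpasi} gives: $(A,\circ,\Delta)$ is commutative and cocommutative if and only if $(A,\circ)$ and $(A^{\ast},\cdot)$ are commutative, $L_{\circ}=R_{\circ}$ and $L_{\cdot}=R_{\cdot}$. By (\ref{bod}), this is exactly commutativity of $\odot$. Conversely, commutativity of $\odot$ restricts to commutativity of $\circ$ and $\cdot$, i.e. of $(A,\circ)$ and of $(A,\Delta)$, the latter being cocommutative since $\cdot$ is dual to $\Delta$.
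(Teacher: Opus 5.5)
Your proposal is correct. The paper gives no proof of this proposition and simply quotes it from \cite{Bai}, so there is no in-paper argument to compare against. Your reduction, which identifies \eqref{bod} with the matched-pair product of Definition~\ref{defiam}, applies Proposition~\ref{mpasi}, and checks invariance of $\mathfrak{B}$ directly, is the natural route and is essentially how the cited result is obtained.

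Two small remarks.

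First, in the forward direction, invariance of $\mathfrak{B}$ for \eqref{bod} holds for arbitrary bilinear operations $\circ$ and $\cdot$. It uses only the definitions of $L_{\circ}^{\ast},R_{\circ}^{\ast},L_{\cdot}^{\ast},R_{\cdot}^{\ast}$. For example,
\[
\mathfrak{B}(x\odot b^{\ast},c^{\ast})=\langle c^{\ast},L_{\cdot}^{\ast}(b^{\ast})x\rangle=\langle b^{\ast}\cdot c^{\ast},x\rangle=\mathfrak{B}(x,b^{\ast}\odot c^{\ast}).
\]
So the relation between $\cdot$ and $\Delta$ is not needed in the cases with two factors in $A^{\ast}$.

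Second, the step you flag as the main obstacle is unnecessary for the statement as written. Here $\odot$ is fixed by \eqref{bod}, and it automatically restricts to $\circ$ on $A$ and to $\cdot$ on $A^{\ast}$. By Definition~\ref{defiam}, its associativity is exactly the matched pair condition for $((A,\circ),(A^{\ast},\cdot),R_{\circ}^{\ast},L_{\circ}^{\ast},R_{\cdot}^{\ast},L_{\cdot}^{\ast})$. The converse therefore follows at once from Proposition~\ref{mpasi}, as your own first paragraph already shows.

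Your uniqueness argument is nonetheless correct, with no side or sign problems. Invariance and nondegeneracy do force
\[
x\odot' b^{\ast}=L_{\cdot}^{\ast}(b^{\ast})x+R_{\circ}^{\ast}(x)b^{\ast},\qquad a^{\ast}\odot' y=R_{\cdot}^{\ast}(a^{\ast})y+L_{\circ}^{\ast}(y)a^{\ast}.
\]
This proves a stronger fact: every standard Manin triple structure on $A\oplus A^{\ast}$ extending $\circ$ and $\cdot$ has product \eqref{bod}. That makes the correspondence independent of any pre-chosen product.

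The commutative part is fine. Commutativity of $\odot$ is equivalent to commutativity of $\circ$ and $\cdot$, and commutativity of $\cdot$ is cocommutativity of $\Delta$ by duality. Combined with the first part, this suffices; you do not need the ``moreover'' clause of Proposition~\ref{mpasi}.
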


Using the method that proves Theorem~\ref{eqvtam}, we obtain the following result by Proposition~\ref{eqvbasi}.
\begin{pro}
\label{eqvbasih}
Let $A$ be a finite-dimensional vector space, $\circ_h$ be a $\bf K$-bilinear operation on $A_h$, $\Delta_h:A_h\to A_h\hat{\ot} A_h$ be a $\bf K$-linear map and $\cdot_h$ be the topological dual operation of $\Delta_h$. Define a binary operation $\odot_h$ on $A_h\oplus A^{\star}_h$ by
    \begin{equation}\label{bodh}
    (x_h+a^{\star})\odot_h(y_h+b^{\star}):=x_h\circ_h y_h+R_{\cdot_h}^{\star}(a^{\star})y_h+L_{\cdot_h}^{\star}(b^{\star})x_h+R_{\circ_h}^{\star}(x_h)b^{\star}+L_{\circ_h}^{\star}(y_h)a^{\star}+ a^{\star}\cdot_h b^{\star},
    \end{equation}
   for all $x_h,y_h\in A_h,\;a^{\star},b^{\star}\in A^{\star}_h$.
Then $(A_h,\circ_h,\Delta_h)$ is a topologically free ASI
bialgebra if and only if $((A_h\oplus A^{\star}_h,\odot_h),A_h,
A^{\star}_h)$ is a standard Manin triple of topologically free
associative algebras. \delete{In this case, we call
$(A_h,\circ_h,\Delta_h)$ the \textbf{topologically free ASI
bialgebra corresponding to} the standard Manin triple of
topologically free associative algebras $((A_h\oplus
A^{\star}_h,\odot_h),A_h, A^{\star}_h)$ and $((A_h\oplus
A^{\star}_h,\odot_h),A_h, A^{\star}_h)$ the \textbf{standard Manin
triple of topologically free associative algebras corresponding
to} the topologically free ASI bialgebra
$(A_h,\circ_h,\Delta_h)$.} 
\end{pro}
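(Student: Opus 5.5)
We plan to mimic the proof of Proposition~\ref{eqvtam}: pass from $\bfK$ to its quotient field $\mathbb{F}$, apply the field-level result Proposition~\ref{eqvbasi} over $\mathbb{F}$, and then transport the conclusion back to $A_h\oplus A_h^{\star}$.

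\textbf{Step 1 (the bialgebra side).} Fix a basis $\{e_1,\dots,e_n\}$ of $A$. Write $\mathbb{F}$ for the quotient field of $\bfK$ and set $A_{(h)}:=A_h\ot_{\bfK}\mathbb{F}$. Let $\circ_{(h)}$ and $\Delta_{(h)}$ be the $\mathbb{F}$-linear extensions of $\circ_h$ and $\Delta_h$, with the same structure constants. As in the proof of Proposition~\ref{eqvtam}, $(A_h,\circ_h,\Delta_h)$ is a topologically free ASI bialgebra if and only if $(A_{(h)},\circ_{(h)},\Delta_{(h)})$ is an ASI bialgebra over $\mathbb{F}$. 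The defining identities (\ref{tib1}) and (\ref{tasib1}) are polynomial identities in the structure constants. They may be checked on basis elements, and $\bfK\subset\mathbb{F}$ is injective.

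\textbf{Step 2 (the Manin triple side).} Let $\cdot_{(h)}$ be the dual operation of $\Delta_{(h)}$ on $A_{(h)}\spcheck$, and let $\odot_{(h)}$ be the product on $A_{(h)}\oplus A_{(h)}\spcheck$ given by (\ref{bod}). By Proposition~\ref{eqvbasi} over $\mathbb{F}$, the ASI bialgebra condition is equivalent to $((A_{(h)}\oplus A_{(h)}\spcheck,\odot_{(h)}),A_{(h)},A_{(h)}\spcheck)$ being a standard Manin triple over $\mathbb{F}$. To compare this with $\odot_h$, we reuse the identities already established in the proof of Proposition~\ref{eqvtam}. For $a^\star\in A_h^\star$ let $a^\star_{\mathbb{F}}$ denote its $\mathbb{F}$-linear extension. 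Then
\[
(R^\star_{\circ_h}(x_h)a^\star)_{\mathbb{F}}=R\spcheck_{\circ_{(h)}}(x_h)a^\star_{\mathbb{F}},
\]
with the analogous identities for $L_{\circ_h}$, $R_{\cdot_h}$, $L_{\cdot_h}$ and $\cdot_h$. It follows that the injective $\bfK$-linear map $\iota:(x_h,a^\star)\mapsto(x_h,a^\star_{\mathbb{F}})$ satisfies
\[
\iota\big((x_h+a^\star)\odot_h(y_h+b^\star)\big)=\iota(x_h+a^\star)\odot_{(h)}\iota(y_h+b^\star).
\]
The bilinear forms are also compatible:
\[
\mathfrak{B}_h(u,v)=\mathfrak{B}_{(h)}(\iota u,\iota v),
\]
where $\mathfrak{B}_{(h)}$ is the form (\ref{dsdb}) over $\mathbb{F}$. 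This holds because $\langle a^\star,y_h\rangle_{\bfK}=a^\star_{\mathbb{F}}(y_h)$.

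\textbf{Step 3 (transfer back to $\bfK$).} Since $\iota$ is injective and multiplicative, associativity of $\odot_h$ and invariance of $\mathfrak{B}_h$ follow from the corresponding properties of $\odot_{(h)}$ and $\mathfrak{B}_{(h)}$. Closure of $A_h$ and $A_h^\star$ under $\odot_h$ is automatic, since $\circ_h$ and $\cdot_h$ land in these subspaces. For the converse we need associativity and invariance over $\mathbb{F}$ from the $\bfK$-level statements. Both are multilinear identities, and the image of $\iota$ spans $A_{(h)}\oplus A_{(h)}\spcheck$ over $\mathbb{F}$: it contains $e_i$ and the dual basis $e_i^\ast$. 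So each identity checked on basis elements over $\bfK$ extends by $\mathbb{F}$-multilinearity. This proves the equivalence.

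The main obstacle is bookkeeping rather than anything conceptual. The key point is that $A_{(h)}\spcheck$ is identified with $A_h^\star\ot_{\bfK}\mathbb{F}$ via $a^\star\mapsto a^\star_{\mathbb{F}}$, using the isomorphism $\Hom_{\bfK}(A_h,\bfK)\cong(A^\ast)_h$. One must also check that the pairing $\langle\,,\rangle_{\bfK}$ matches evaluation. Both facts hold in finite dimension by working in coordinates, and once they are in place the rest is formal.
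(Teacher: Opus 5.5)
Your proposal is correct and follows the paper's argument: the paper proves this proposition in one line, by rerunning the quotient-field method from the proof of Proposition~\ref{eqvtam} with Proposition~\ref{eqvbasi} in place of Proposition~\ref{mpasi}. Your Step~3 remark, that the image of $\iota$ spans the $\mathbb{F}$-level direct sum, is a useful addition: it is needed to pass associativity and invariance from $\bfK$ up to $\mathbb{F}$, and the paper leaves that point implicit.
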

\begin{thm}
    \label{eqvdadb}
    Let $(A,\circ,\Delta)$ be a finite-dimensional ASI bialgebra and $(A_h,\circ_h,\Delta_h)$ be a topologically free ASI bialgebra. Let $\cdot$ and $\cdot_h$ be the dual operation and the topological dual operation of $\Delta$ and $\Delta_h$, respectively. Define a binary operation $\odot$ on $A\oplus A^{\ast}$ and a binary operation $\odot_h$ on $A_h\oplus A^{\star}_h$ by {\rm(\ref{bod})} and {\rm(\ref{bodh})}, respectively. Then $(A_h,\circ_h,\Delta_h)$ is an ASI bialgebra deformation of $(A,\circ,\Delta)$ if and only if
$((A_h\oplus A^{\star}_h,\odot_h),A_h, A^{\star}_h)$ is an associative Manin triple deformation of $((A\oplus A^{\ast},\odot),A,A^{\ast})$.
\end{thm}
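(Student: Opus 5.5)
Since $(A_h,\circ_h,\Delta_h)$ is assumed to be a topologically free ASI bialgebra, Proposition~\ref{eqvbasih} shows that $((A_h\oplus A^{\star}_h,\odot_h),A_h,A^{\star}_h)$ is automatically a standard Manin triple of topologically free associative algebras. Likewise, Proposition~\ref{eqvbasi} shows that $((A\oplus A^{\ast},\odot),A,A^{\ast})$ is a standard Manin triple of associative algebras. So the statement reduces to the following claim: $(A_h,\circ_h)$ and $(A_h,\Delta_h)$ are deformations of $(A,\circ)$ and $(A,\Delta)$ if and only if
\[
(x+a^{\ast})\odot_h(y+b^{\ast})\equiv(x+a^{\ast})\odot(y+b^{\ast})\pmod h
\]
for all $x,y\in A$ and $a^{\ast},b^{\ast}\in A^{\ast}$, where $A_h\oplus A^{\star}_h$ is identified with $(A\oplus A^{\ast})_h$ as in Remark~\ref{ids}.

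The plan is to compare (\ref{bodh}) with (\ref{bod}) componentwise, following the proof of Proposition~\ref{proedm}. We choose the arguments suitably: take $a^{\ast}=b^{\ast}=0$, then $x=0,\,b^{\ast}=0$, and so on. This shows that the congruence above is equivalent to the following six congruences modulo $h$:
\begin{itemize}
\item $x\circ_h y\equiv x\circ y$;
\item $a^{\ast}\cdot_h b^{\ast}\equiv a^{\ast}\cdot b^{\ast}$;
\item $R^{\star}_{\circ_h}(x)b^{\ast}\equiv R^{\ast}_\circ(x)b^{\ast}$ and $L^{\star}_{\circ_h}(y)a^{\ast}\equiv L^{\ast}_\circ(y)a^{\ast}$;
\item $R^{\star}_{\cdot_h}(a^{\ast})y\equiv R^{\ast}_\cdot(a^{\ast})y$ and $L^{\star}_{\cdot_h}(b^{\ast})x\equiv L^{\ast}_\cdot(b^{\ast})x$.
\end{itemize}
These are exactly the congruences (\ref{e1})--(\ref{e3}) for the matched pairs of Proposition~\ref{mpasi} and Proposition~\ref{eqvtam}.

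Both directions now go exactly as in Theorem~\ref{eqvdadm}. One could also deduce the result directly from Theorem~\ref{eqvdadm} together with Proposition~\ref{proedm}, since $\odot_h$ and $\odot$ are precisely the products of the matched pair algebras $A_h\bowtie A^{\star}_h$ and $A\bowtie A^{\ast}$.

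For the forward direction, the congruence for $\circ_h$ gives $L_{\circ_h}(x)\equiv L_\circ(x)$ and $R_{\circ_h}(x)\equiv R_\circ(x)$. Proposition~\ref{prodc}~(\ref{dedu}) then gives the congruences for the dual actions $R^{\star}_{\circ_h}$ and $L^{\star}_{\circ_h}$. The pairing computation (\ref{cc}), run in reverse, gives $a^{\ast}\cdot_h b^{\ast}\equiv a^{\ast}\cdot b^{\ast}$. Applying Proposition~\ref{prodc} again gives the congruences for $R^{\star}_{\cdot_h}$ and $L^{\star}_{\cdot_h}$.

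For the converse, restrict to the components $A\times A\to A$ and $A^{\ast}\times A^{\ast}\to A^{\ast}$ to recover $x\circ_h y\equiv x\circ y$ and $a^{\ast}\cdot_h b^{\ast}\equiv a^{\ast}\cdot b^{\ast}$. Then (\ref{cc}) shows $\Delta_h(x)\equiv\Delta(x)\pmod h$, using that the pairing $\langle\,,\rangle_{\bf K}$ is nondegenerate modulo $h$ because $A$ is finite-dimensional.

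The main obstacle is bookkeeping rather than anything conceptual. One has to keep the identification of $A_h\oplus A^{\star}_h$ with $(A\oplus A^{\ast})_h$ straight, and check that a congruence of $\bf K$-pairings modulo $h$ against all $a^{\ast}\otimes b^{\ast}$ forces a congruence of elements of $A_h\hat\otimes A_h$. The latter follows by pairing with a dual basis.
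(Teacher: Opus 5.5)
Your proposal is correct and follows essentially the same route as the paper. The paper also uses Proposition~\ref{prodc}~(\ref{dedu}) and the pairing argument (\ref{cc}) to reduce the Manin-triple deformation condition to the componentwise congruences (\ref{cimh}), (\ref{sci2}), (\ref{cdmh}), (\ref{scd2}), and in the converse it recovers the coassociative deformation from the $A^{\star}_h$-component. Your side remark is also valid: since (\ref{bod}) and (\ref{bodh}) are exactly the products of the corresponding matched-pair algebras, the result follows immediately from Theorem~\ref{eqvdadm}.
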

\begin{proof}
Assume that $((A_h\oplus A^{\star}_h,\odot_h),A_h, A^{\star}_h)$ is an associative Manin triple deformation of $((A\oplus A^{\ast},\odot),A,A^{\ast})$. Then $(A_h\oplus A^{\star}_h,\odot_h)$ is an associative deformation of $(A\oplus A^{\ast},\odot)$. It is direct to see that $(A_h,\circ_h)$ and $(A^{\star}_h,\cdot_h)$ are associative deformations of $(A,\circ)$ and $(A^{\ast},\cdot)$, respectively. Similar to (\ref{cc}), we show that $(A_h,\Delta_h)$ is a coassociative deformation of $(A,\Delta)$. Therefore, $(A_h,\circ_h,\Delta_h)$ is an ASI bialgebra deformation of $(A,\circ,\Delta)$.

Assume that $(A_h,\circ_h,\Delta_h)$ is an ASI bialgebra deformation of $(A,\circ,\Delta)$. Then (\ref{cimh}) holds. Thus, similar to (\ref{sci}), we obtain the following congruences.
    \begin{equation}\label{sci2}
        R^{\star}_{\circ_h}(x)b^{\ast}\equiv R_{\circ}^{\ast}(x)b^{\ast}\pmod h,\;\;L^{\star}_{\circ_h}(y)a^{\ast}\equiv
        L_{\circ}^{\ast}(y)a^{\ast}\pmod h,
        \;\;\forall x,y\in A,\;a^{\ast},b^{\ast}\in A^{\ast}.
    \end{equation}
Since $(A_h,\Delta_h)$ is a coassociative deformation of $(A,\Delta)$, similar to (\ref{cc}), we obtain (\ref{cdmh}). Thus, similar to (\ref{sci}),  we have
\begin{equation}\label{scd2}
R^{\star}_{\cdot_h}(a^{\ast})y\equiv
R_{\cdot}^{\ast}(a^{\ast})y\pmod
h,\;\;L^{\star}_{\cdot_h}(b^{\ast})x\equiv
R_{\cdot}^{\ast}(b^{\ast})x\pmod h,\;\;x,y\in A,\;a^{\ast},b^{\ast}\in
A^{\ast}.
\end{equation} Adding (\ref{cimh}), (\ref{sci2}), (\ref{cdmh}) and (\ref{scd2}) together, we show that $(A_h\oplus A^{\star}_h,\odot_h)$ is an associative
deformation of $(A\oplus A^{\ast},\odot)$, completing the proof.
\end{proof}



\begin{pro}
    \label{eqvbpb}{\rm(\cite{NB})}
    Let $A$ be a finite-dimensional vector space, $\{,\},\circ$ be binary operations on $A$ and $\delta,\Delta:A\to A\ot A$ be linear maps. Let $[,]$ and $\cdot$ be the dual operations of $\delta$ and $\Delta$, respectively. Define binary operations $\odot$ and $\llbracket,\rrbracket$ on $A\oplus A^{\ast}$ by {\rm(\ref{bod})} and the following equation, respectively.
        \begin{equation}\label{lrb}
    \llbracket x+a^{\ast},y+b^{\ast}\rrbracket:=\{x, y\}+\ad_{[,]}^{\ast}(b^{\ast})x-\ad_{[,]}^{\ast}(a^{\ast})y+\ad_{\{,\}}^{\ast}(y)a^{\ast}-\ad_{\{,\}}^{\ast}(x)b^{\ast}+ [a^{\ast},b^{\ast}],
    \end{equation}
for all $x,y\in A,\;a^{\ast},b^{\ast}\in A^{\ast}$. Then
$(A,\{,\},\circ,\delta, \Delta)$ is a Poisson bialgebra if and
only if $((A\oplus A^{\ast},\llbracket,\rrbracket,\odot),A,
A^{\ast})$ is a standard Manin triple of Poisson algebras. \delete{In this
case, $(A,\{,\},\circ,\delta, \Delta)$ is called the
\textbf{Poisson bialgebra corresponding to} the standard Manin
triple of Poisson algebras $((A\oplus
A^{\ast},\llbracket,\rrbracket,\odot),A, A^{\ast})$, and
$((A\oplus A^{\ast},\llbracket,\rrbracket,\odot),A, A^{\ast})$ is
called the \textbf{standard Manin triple of Poisson algebras
corresponding to } the Poisson bialgebra $(A,\{,\},\circ,\delta,
\Delta)$.} 
\end{pro}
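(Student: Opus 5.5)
We would derive this from Proposition~\ref{eqvmpp} together with the associative statement Proposition~\ref{eqvbasi}, rather than verify all identities from scratch. First, note that the operations (\ref{bod}) and (\ref{lrb}) on $A\oplus A^{\ast}$ are exactly the operations (\ref{ppod}) of Definition~\ref{ppipei} for the data $((A,\{,\},\circ),(A^{\ast},[,],\cdot),-\ad_{\{,\}}^{\ast},R_{\circ}^{\ast},-\ad_{[,]}^{\ast},R_{\cdot}^{\ast})$, once $\circ$ and $\cdot$ are commutative. Indeed, $R^{\ast}_\circ=L^{\ast}_\circ$ and $R^{\ast}_\cdot=L^{\ast}_\cdot$ in that case, and the signs in (\ref{lrb}) match $\rho_{\{,\}}=-\ad^{\ast}_{\{,\}}$ and $\rho_{[,]}=-\ad^{\ast}_{[,]}$. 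By Proposition~\ref{eqvmpp}, $(A,\{,\},\circ,\delta,\Delta)$ is a Poisson bialgebra if and only if $(A\oplus A^{\ast},\llbracket,\rrbracket,\odot)$ is a Poisson algebra, provided the operations are commutative. It therefore remains to show that, when $A\oplus A^{\ast}$ with these operations is a Poisson algebra, the form $\mathfrak{B}$ of (\ref{dsdb}) is automatically invariant. Conversely, we must show that a standard Manin triple of Poisson algebras with $A$ and $A^{\ast}$ as Poisson subalgebras forces the products on $A\oplus A^{\ast}$ to be given by (\ref{bod}) and (\ref{lrb}).

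For the direction from Poisson bialgebra to Manin triple, the Poisson bialgebra contains a commutative and cocommutative ASI bialgebra $(A,\circ,\Delta)$. By Proposition~\ref{eqvbasi}, $\mathfrak{B}$ is then invariant for $\odot$, and $\odot$ is commutative. For the bracket, we check $\mathfrak{B}(\llbracket X,Y\rrbracket,Z)=\mathfrak{B}(X,\llbracket Y,Z\rrbracket)$ directly on homogeneous elements $X,Y,Z\in A\cup A^{\ast}$. Each case reduces to the defining relation of a dual map, for instance $\langle \ad^{\ast}_{\{,\}}(x)b^{\ast},y\rangle=\langle b^{\ast},\{x,y\}\rangle$, together with antisymmetry of $\{,\}$ and $[,]$. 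This is exactly the classical Lie Manin triple computation, and the subalgebra conditions hold by construction.

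For the converse, we start from a standard Manin triple $((A\oplus A^{\ast},\llbracket,\rrbracket,\odot),A,A^{\ast})$. Invariance of $\mathfrak{B}$ together with isotropy of $A$ and $A^{\ast}$ determines the mixed products. For $x\in A$ and $b^{\ast}\in A^{\ast}$, pairing $x\odot b^{\ast}$ against $y\in A$ and against $a^{\ast}\in A^{\ast}$ and using invariance recovers the components $L^{\ast}_\cdot(b^{\ast})x$ and $R^{\ast}_\circ(x)b^{\ast}$. Likewise $\llbracket x,b^{\ast}\rrbracket$ has components $\ad^{\ast}_{[,]}(b^{\ast})x$ and $-\ad^{\ast}_{\{,\}}(x)b^{\ast}$. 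Here the restriction of $\odot$ to $A$ is $\circ$, and to $A^{\ast}$ it is the product $\cdot$ dual to $\Delta$; similarly $[,]$ on $A^{\ast}$ is dual to $\delta$. So the operations are (\ref{bod}) and (\ref{lrb}), and since $A\oplus A^{\ast}$ is a Poisson algebra, Proposition~\ref{eqvmpp} yields the Poisson bialgebra. Commutativity of the whole $\odot$ is part of being a Poisson algebra, which gives commutativity and cocommutativity as required in Definition~\ref{defipb}.

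The main obstacle is bookkeeping in this converse step. We must make sure the signs produced by invariance of the bracket match precisely the signs $-\ad^{\ast}$ used in Proposition~\ref{eqvmpp}. We also need, for the associative part, that $L^{\ast}_\circ=R^{\ast}_\circ$ follows from commutativity. We would check the sign carefully on the single case $\mathfrak{B}(\llbracket x,b^{\ast}\rrbracket,y)=\mathfrak{B}(x,\llbracket b^{\ast},y\rrbracket)$ and then extend the other cases by symmetry.
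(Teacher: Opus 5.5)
The paper gives no proof of this proposition; it is quoted from \cite{NB}, so there is no in-paper argument to compare against. Your reduction to Proposition~\ref{eqvmpp} is correct.

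In both directions $\circ$ and $\cdot$ are commutative. In the forward direction this comes from the commutative and cocommutative ASI bialgebra. In the converse it comes from commutativity of the Poisson algebra $(A\oplus A^{\ast},\odot)$ restricted to $A$ and to $A^{\ast}$. Hence $L^{\ast}_{\circ}=R^{\ast}_{\circ}$ and $L^{\ast}_{\cdot}=R^{\ast}_{\cdot}$, so \eqref{bod} and \eqref{lrb} are exactly \eqref{ppod} for the data $(-\ad^{\ast}_{\{,\}},R^{\ast}_{\circ},-\ad^{\ast}_{[,]},R^{\ast}_{\cdot})$. Your case check of bracket invariance is also right. It needs only the antisymmetry of $\{,\}$ and of $[,]$, the latter coming from $\delta=-\sigma\delta$.

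Two steps can be streamlined.

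First, in the converse you do not need to recover the mixed products from invariance and isotropy. In this statement $\odot$ and $\llbracket,\rrbracket$ are \emph{defined} by \eqref{bod} and \eqref{lrb}. The Manin-triple hypothesis is therefore used only through the fact that $(A\oplus A^{\ast},\llbracket,\rrbracket,\odot)$ is a Poisson algebra with $A$ and $A^{\ast}$ closed under both operations. Your recovery argument is still valid, but it proves something stronger: any standard Manin triple of Poisson algebras on $A\oplus A^{\ast}$ that restricts to the given structures must have this form.

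Second, in the forward direction, Proposition~\ref{eqvbasi} is more than you need for invariance under $\odot$. Checking the mixed cases directly shows that $\mathfrak{B}(X\odot Y,Z)=\mathfrak{B}(X,Y\odot Z)$ holds for \eqref{bod} with arbitrary $\circ$ and $\Delta$. Likewise, bracket invariance for \eqref{lrb} needs only antisymmetry. So the invariance part of the Manin-triple condition is purely formal here. The proposition is simply Proposition~\ref{eqvmpp} rephrased, which is what your argument effectively shows.
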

\begin{thm}
  \label{qcldadb}
    Let $A$ be a finite-dimensional vector space, $\circ$ be a binary operation on $A$, $\Delta:A\to A\ot A$ be a linear map and $\cdot$ be the dual operation of $\Delta$. Let $\circ_h$ be a $\bf K$-bilinear operation on $A_h$, $\Delta_h:A_h\to A_h\hat{\ot} A_h$ be a $\bf K$-linear map and $\cdot_h$ be its topological dual operation.
    \begin{enumerate}
        \item\label{qcldadb1} Assume that $(A,\circ,\Delta)$ is a commutative and cocommutative ASI bialgebra, $(A_{h},\circ_{h},\Delta_{h})$ is an ASI bialgebra deformation and $(A,\{,\},\circ,\delta,\Delta)$ is the QCL.  Then $((A_h\oplus A^{\star}_h,\odot_h),A_h, A^{\star}_h)$, which is the standard Manin triple of topologically free associative algebras corresponding to $(A_{h},\circ_{h},\Delta_{h})$, is an associative Manin triple deformation of $((A\oplus A^{\ast},\odot),A,A^{\ast})$, which is the standard Manin triple of commutative associative algebras corresponding to $(A,\circ,\Delta)$. Moreover, $((A\oplus A^{\ast},\llbracket,\rrbracket,\odot),A, A^{\ast})$, which is the standard Manin triple of Poisson algebras corresponding to $(A,\{,\},\circ,\delta,\Delta)$, coincides with the QCL of $((A_h\oplus A^{\star}_h,\odot_h),A_h, A^{\star}_h)$.
         \item\label{qcldadb2}
         Define a binary operation $\odot$ on $A\oplus A^{\ast}$ and a binary operation $\odot_h$ on $A_h\oplus A^{\star}_h$ by {\rm(\ref{bod})} and {\rm(\ref{bodh})}, respectively.
         Assume that $((A_h\oplus A^{\star}_h,\odot_h),A_h, A^{\star}_h)$ is an associative Manin triple deformation of
         a standard Manin triple of commutative associative algebras $((A\oplus A^{\ast},\odot),A,A^{\ast})$ and the QCL is $((A\oplus A^{\ast},\llbracket,\rrbracket,\odot),A, A^{\ast})$.
          Then $(A_{h},\circ_{h},\Delta_{h})$, which is the topologically free ASI bialgebra corresponding to $((A_h\oplus A^{\star}_h,\odot_h),A_h, A^{\star}_h)$, is an ASI bialgebra deformation of $(A,\circ,\Delta)$, which is the commutative and cocommutative ASI bialgebra corresponding to $((A\oplus A^{\ast},\odot),A,A^{\ast})$. Moreover, the Poisson bialgebra corresponding to the standard Manin triple of Poisson algebras $((A\oplus A^{\ast},\llbracket,\rrbracket,\odot),A, A^{\ast})$ coincides with the QCL of $(A_{h},\circ_{h},\Delta_{h})$.
         \end{enumerate}
         That is, we have the following commutative diagram.
         \begin{equation}
         \begin{gathered}
\xymatrixcolsep{5pc}\xymatrixrowsep{5pc}
\xymatrix{
    *\txt{commutative and\\
        cocommutative\\ ASI bialgebra\\$(A,\circ,\Delta)$}\ar@<-3pt>[d]|-{\txt{\small Proposition~{\rm\ref{eqvbasi}}}}\ar[r]^{\txt{\small deformation}}&*\txt{topologically free\\ASI bialgebra\\$(A_{h},\circ_{h},\Delta_{h})$}\ar[r]^{\txt{\small QCL}}_{\txt{\small Theorem~{\rm\ref{thm-limasi}}}}\ar@<-3pt>[d]|-{\txt{\small Proposition~{\rm\ref{eqvbasih}}}}&*\txt{Poisson bialgebra\\$(A,\{,\},\circ,\delta,\Delta)$}\ar@<-3pt>[d]|-{\txt{\small Proposition~{\rm\ref{eqvbpb}}}}\\
    *\txt{standard Manin triple\\of commutative\\associative algebras\\$((A\oplus A^{\ast},\odot),A,A^{\ast})$}\ar[r]^{\txt{\small deformation}}\ar@<-3pt>[u]|-{\phantom{\txt{\small Proposition~{\rm\ref{mpasi}}}}}
    & *\txt{standard Manin triple\\of topologically free\\associative algebras\\$((A_h\oplus A^{\star}_h,\odot_h),A_h, A^{\star}_h)$}\ar@<-3pt>[u]|-{\phantom{\txt{\small Proposition~\ref{eqvtam}}}}\ar[r]^{\txt{\small QCL}}_{\txt{\small Theorem~{\rm\ref{qcldmt}}}}
    & *\txt{standard Manin \\triple of\\ Poisson algebras\\$((A\oplus A^{\ast},\llbracket,\rrbracket,\odot),$\\$A, A^{\ast})$.}\ar@<-3pt>[u]|-{\phantom{\txt{\small Proposition~\ref{eqvmpp}}}}
}\label{diag2}
 \end{gathered}
\end{equation}
\end{thm}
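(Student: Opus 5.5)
The plan follows the proof of Theorem~\ref{qcldadm} closely, with Propositions~\ref{eqvbasi}, \ref{eqvbasih} and \ref{eqvbpb} taking the places of Propositions~\ref{mpasi}, \ref{eqvtam} and \ref{eqvmpp}.

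For (\ref{qcldadb1}), I first apply Proposition~\ref{eqvbasih}: $(A_h,\circ_h,\Delta_h)$ is a topologically free ASI bialgebra, so $((A_h\oplus A^{\star}_h,\odot_h),A_h,A^{\star}_h)$ is a standard Manin triple of topologically free associative algebras. By Proposition~\ref{eqvbasi}, $((A\oplus A^{\ast},\odot),A,A^{\ast})$ is a standard Manin triple of commutative associative algebras. Theorem~\ref{eqvdadb} then shows that the former is an associative Manin triple deformation of the latter.

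The remaining task is to identify the QCL bracket of $(A_h\oplus A^{\star}_h,\odot_h)$ with $\llbracket,\rrbracket$ from (\ref{lrb}). I would expand $(x+a^{\ast})\odot_h(y+b^{\ast})-(y+b^{\ast})\odot_h(x+a^{\ast})$ by (\ref{bodh}) and sort it into pieces:
\begin{itemize}
\item $x\circ_h y-y\circ_h x\equiv \{x,y\}h \pmod{h^2}$;
\item $a^{\ast}\cdot_h b^{\ast}-b^{\ast}\cdot_h a^{\ast}\equiv [a^{\ast},b^{\ast}]h \pmod{h^2}$, which is (\ref{qclads2});
\item the mixed terms $(R^{\star}_{\circ_h}(x)-L^{\star}_{\circ_h}(x))b^{\ast}$, $(L^{\star}_{\circ_h}(y)-R^{\star}_{\circ_h}(y))a^{\ast}$, and the analogous terms with $\cdot_h$.
\end{itemize}
By (\ref{qclads1}) and (\ref{qclads3}), which come from Proposition~\ref{prodc}~(\ref{cldu}), these mixed terms are congruent mod $h^2$ to $h$ times $-\ad^{\ast}_{\{,\}}(x)b^{\ast}$, $\ad^{\ast}_{\{,\}}(y)a^{\ast}$, $\ad^{\ast}_{[,]}(b^{\ast})x$ and $-\ad^{\ast}_{[,]}(a^{\ast})y$.

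The main obstacle I expect is getting these signs to line up with (\ref{lrb}). To handle it, I will compare with the first component of the QCL computation in Theorem~\ref{qclpipei}, which gives $\rho_{[,]}(u)y-\rho_{[,]}(v)x$ with $\rho=-\ad^{\ast}$. For the $A^{\ast}$-component I will check the sign of $R^{\star}-L^{\star}$ against $-\ad^{\ast}$ directly from the definition of the dual maps.

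Once the bracket is identified, the QCL triple is exactly the standard Manin triple of Poisson algebras attached by Proposition~\ref{eqvbpb} to $(A,\{,\},\circ,\delta,\Delta)$.

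For (\ref{qcldadb2}), I run the argument in reverse:
\begin{itemize}
\item Proposition~\ref{eqvbasih} gives that $(A_h,\circ_h,\Delta_h)$ is a topologically free ASI bialgebra.
\item Proposition~\ref{eqvbasi} gives that $(A,\circ,\Delta)$ is commutative and cocommutative.
\item Theorem~\ref{eqvdadb} gives the deformation statement.
\item Part (\ref{qcldadb1}) shows that the QCL of $(A_h,\circ_h,\Delta_h)$, call it $(A,\{,\}',\circ,\delta',\Delta)$, produces via Proposition~\ref{eqvbpb} a Manin triple whose bracket is the QCL bracket $\llbracket,\rrbracket$ of $\odot_h$.
\end{itemize}
Restricting $\llbracket,\rrbracket$ to $A$ recovers $\{,\}'$, and restricting it to $A^{\ast}$ gives the dual of $\delta'$. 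This proves that the Poisson bialgebra corresponding to the given Manin triple coincides with the QCL.
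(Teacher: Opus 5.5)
Your proposal is correct and follows essentially the same route as the paper. Theorem~\ref{eqvdadb} gives the deformation statements, and the QCL bracket of $\odot_h$ is identified termwise via (\ref{qclads1})--(\ref{qclads3}); your signs $-\ad^{\ast}_{\{,\}}(x)b^{\ast}$, $\ad^{\ast}_{\{,\}}(y)a^{\ast}$, $\ad^{\ast}_{[,]}(b^{\ast})x$, $-\ad^{\ast}_{[,]}(a^{\ast})y$ do match (\ref{lrb}). The only minor difference is in part (2): you reapply part (1) once the deformation is established, whereas the paper checks directly, as in (\ref{qclads2}), that the restriction of $\llbracket,\rrbracket$ to $A^{\ast}$ is the dual operation of $\delta$. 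Both arguments work.
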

\begin{proof}
    (\ref{qcldadb1}). The first conclusion follows from Theorem~{\ref{eqvdadb}}. Since the QCL of $(A_h,\circ_h)$ is $(A,\{,\},\circ)$, we have
    \begin{equation}\label{qclbb}
        \{x,y\}\equiv\frac{x\circ_{h}y-y\circ_{h}x}{h}\pmod{h},\;\;\forall  x, y\in A.
    \end{equation}
Similar to (\ref{qclads1}), we obtain
      \begin{equation}\label{qclads12}
         \ad^{\ast}_{\{,\}}(y)a^{\ast}\equiv\frac{L^{\star}_{\circ_h}(y)a^{\ast}-R^{\star}_{\circ_h}(y)a^{\ast}}{h}\pmod{h},\;-\ad^{\ast}_{\{,\}}(x)b^{\ast}\equiv\frac{R^{\star}_{\circ_h}(x)b^{\ast}-L^{\star}_{\circ_h}(x)b^{\ast}}{h}\pmod{h},
      \end{equation}
      where $x,y\in A,a^{\ast},b^{\ast}\in  A^{\ast}$. Let $\cdot_h$ be the topological dual operation of $\Delta_h$ and $[,]$ be the dual operation of $\delta$. Then, similar to (\ref{qclads2}), the following congruence holds.
    \begin{equation}\label{qclads22}
       [a^{\ast},b^{\ast}]\equiv
           \frac{a^{\ast}\cdot_h b^{\ast}- b^{\ast}\cdot_h a^{\ast}}{h} \pmod{h},\;\;\forall a^{\ast},b^{\ast}\in  A^{\ast}.
    \end{equation}
Similar to (\ref{qclads3}), we have
 \begin{equation}\label{qclads32}
\ad^{\ast}_{[,]}(b^{\ast})x\equiv\frac{L^{\star}_{\cdot_h}(b^{\ast})x-R^{\star}_{\cdot_h}(b^{\ast})x}{h}\pmod{h},\;\;-\ad^{\ast}_{[,]}(a^{\ast})y\equiv\frac{R^{\star}_{\cdot_h}(a^{\ast})y-L^{\star}_{\cdot_h}(a^{\ast})y}{h}\pmod{h},
\end{equation}
where $x,y\in A,a^{\ast},b^{\ast}\in  A^{\ast}$.
Adding (\ref{qclbb}), (\ref{qclads12}), (\ref{qclads22}) and (\ref{qclads32}) together, we show that the QCL of $(A_h\oplus A^{\star}_h,\odot_h)$ coincides with $(A\oplus A^{\ast},\llbracket,\rrbracket,\odot)$,
completing the proof of (\ref{qcldadm1}).

       (\ref{qcldadb2}). The first conclusion follows from Theorem~{\ref{eqvdadb}}. Denote by $(A,\{,\},\circ,\delta,\Delta)$ the QCL of $(A_{h},\circ_{h},\Delta_{h})$.  Let $[,],\cdot$ be the restrictions of $\llbracket,\rrbracket,\odot$ on $A^{\ast}$, respectively. Let $\cdot_h$ be the restrictions of $\odot_h$ on $A^{\star}_h$.
       Then $\cdot$ is the dual operation of $\Delta$ because $(A,\circ,\Delta)$ is the commutative and cocommutative ASI bialgebra corresponding to $((A\oplus A^{\ast},\odot),A,A^{\ast})$. On the other hand, similar to (\ref{qclads2}), we show that $[,]$ coincides with the dual operation of $\delta$, yielding the conclusion.
\end{proof}


At the end of this paper, we would like to point out that,
similarly, we can also illustrate the deformations-quasiclassical
 limits process for associative Manin triple deformations
 explicitly in terms of derivations, corresponding to the
 antisymmetric infinitesimal bialgebra deformations via coherent
 derivations in Section~\ref{sec2.3} as well as the associative  matched
 pair  deformations via derivations in Section~\ref{sec3.3}.

\noindent {\bf Acknowledgements.} This work is supported by NSFC
(12271265, 12261131498, W2412041, 12671037), Fundamental Research Funds for
the Central Universities and Nankai Zhide Foundation. S. Chen is supported by the Scientific Research Foundation of the Education Department of Hunan Province (Grant No. 25C0411). The authors thank
the referees for valuable suggestions. 

\noindent
{\bf Declaration of interests. } The authors have no conflicts of interest to disclose.

\noindent
{\bf Data availability. } Data sharing is not applicable as no new data were created or analyzed.


\end{document}